\renewcommand{\theequation}{\arabic{section}.\arabic{equation}}
\newtheorem{theo}{\sc Theorem}[section]
\newtheorem{lemme}[theo]{\sc Lemma}
\newtheorem{propo}[theo]{\sc Proposition}
\newtheorem{cor}[theo]{\sc Corollary}
\newtheorem{defi}[theo]{\sc Definition}
\newtheorem{proper}[theo]{\sc Properties}
\newtheorem{nb}[theo]{\sc Remark}
\newtheorem{theoB}{\sc Theorem B.}
\newtheorem{lemmeB}[theoB]{\sc Lemma B.}
\newtheorem{defiB}[theoB]{\sc Definition B.}
\newtheorem{exaB}[theoB]{\sc Example B.}
\newtheorem{nbB}[theoB]{\sc Remark B.}
\theoremstyle{definition}
\def \leq {\leqslant}
\def \geq {\geqslant}
\numberwithin{equation}{section}
\def \G {\mathcal{G}}
\def \d {\mathrm{d}}
\def \D {\mathscr{D}}
\def \v {\mathcal{T}}
\def \J {\mathcal{J}}
\def \com {$C_0$-semigroup }
\def \Tt {(\T(t))_{t \geq 0}}
\def \ut {(\u(t))_{t \geq 0}}
\def \vt {(\v(t))_{t \geq 0}}
\def \srt {(\mathcal{S}_r(t))_{t \geq 0}}
\def \st {(S(t))_{t \geq 0}}
\def \ds {\displaystyle}
\def \T {\mathcal{V}}
\def \B {\mathcal{B}}
\def \u {\mathcal{U}}
\def \X {\mathfrak{X}}
\def \la {\left \langle}
\def \ra {\right \rangle}
\def \l {\lambda}
\def \A {\mathcal{A}}
\def \cc {\mathfrak{a}}
\def \bl {\mathbf{\Xi}_\l}
\def \bi {\psi }
\def \tr {\mathrm{tr}}
\def \Tr {\mathrm{tr}}
\def \cS {\mathcal{S}}
\def \cH {\mathcal{H}}
\def \cT {\mathscr{T}}
\def \cts {\mathscr{T}_s(\cH)}
\def \al {algebra }
\def \mA {\mathfrak{A}}
\def \com {$C_0$-semigroup }
\title[]{\textsc{On perturbed substochastic semigroups in abstract state spaces}}
\author{L. Arlotti, B. Lods \& M. Mokhtar-Kharroubi}
\address{{L. Arlotti},  {Dipartimento di Ingegneria Civile e Architettura}, Universit\`a di
Udine, via delle Scienze 208,  33100 Udine, Italy. \newline{\tt
luisa.arlotti@uniud.it}}
\address{{B. Lods}, Laboratoire de Math\'{e}matiques, CNRS UMR 6620, Universit\'{e}
Blaise Pascal (Clermont-Ferrand 2), 63177 Aubi\`{e}re Cedex,
France.\newline {\tt bertrand.lods@math.univ-bpclermont.fr} }
\address{{M. Mokhtar-Kharroubi}, Universit\'e de Franche--Comt\'e, Equipe de Math\'ematiques, CNRS UMR
6623, 16, route de Gray, 25030 Besan\c con Cedex, France.\newline
\noindent{\tt mmokhtar@univ-fcomte.fr}}
\begin{document}

\bibliographystyle{plain}

\maketitle


\begin{abstract}
The object of this paper is twofold: In the first part, we unify and extend
the recent developments on honesty theory of perturbed substochastic
semigroups (on $L^{1}(\mu )$-spaces or noncommutative $L^{1}$ spaces) to
general state spaces; this allows us to capture for instance a honesty
theory in preduals of abstract von Neumann algebras or subspaces of duals of
abstract $C^{\ast }$-algebras. In the second part of the paper, we provide
another honesty theory (a semigroup-perturbation approach) independent of
the previous resolvent-perturbation approach and show the equivalence of the
two approaches. This second viewpoint on honesty is new even in $L^{1}(\mu )$
spaces. Several fine properties of Dyson-Phillips expansions are given and a
classical generation theorem by T. Kato is revisited.

\noindent \textsc{Keywords:} Substochastic semigroups; additive norm; total mass carried by a trajectory; Dyson-Phillips expansion.

\end{abstract}

\section{Introduction}

In his famous paper on Kolmogorov's differential equations (for Markov
processes with denumerable states) T. Kato  \cite{kato}  introduced the
main tools for dealing with positive unbounded perturbations $\B$ of
generators $\mathcal{A}$ of substochastic semigroups in $\ell^{1}(\mathbb{N})$ provided that a
suitable dissipation  on the positive cone is satisfied. Among other
things, he showed that there exists a unique  extension  $\G\supset
\B+\mathcal{A} $ which generates a substochastic semigroup and characterized the closure
property $\G=\overline{\B+\A}$ by the fact that $\left[ \B(\lambda -\A)^{-1}%
\right] ^{n}\rightarrow 0$ strongly as $n\rightarrow +\infty $ (in general, $%
\G$ may be a proper extension of $\overline{\B+\A}$)$.$ We note
that for ``formally conservative'' equations, such as Kolmogorov's
differential equations, the property \ $\G=\overline{\B+\A}$ is
essential (i.e. necessary and sufficient) to assert that the corresponding
semigroup is \textit{mass-preserving} on the positive cone. Finally, T. Kato
 \cite{kato}  pointed out that his formalism is adapted to general
$AL$--spaces, i.e. Banach lattices $\X$ whose norm is {additive} on
the positive cone $\X_{+}$ i.e. $\left\| x+y\right\| =\left\| x\right\|
+\left\| y\right\| ,\;\;x,y\in \X_{+}.$ Actually, even the lattice assumption
is not essential since Kato's ideas were applied by E. B. Davies  \cite{Da2} to quantum dynamical semigroups in the real Banach space of
self-adjoint trace class operators; in this case, the closure property \ $\G=%
\overline{\B+\A}$ is essential to assert that the corresponding semigroup is
\textit{trace-preserving} on the positive cone.

By the end of the 1980's, Kato's paper \cite{kato} was revisited by
means of Miyadera perturbations in $AL$--spaces \cite{voigt,voigt1, arlotti} and new functional analytic developments followed also in the
2000's \cite{banasiak, BA1, fvdm} which are
known nowadays as the honesty theory of perturbed substochastic semigroups
in $L^{1}(\mu)$ spaces \cite[Chapter 6]{arloban}. Of course, this theory is
motivated by various applications to kinetic theory, fragmentation
equations, birth-and-death equations and so on; see \cite{arloban} and
references therein. We note also that the analysis of piecewise deterministic Markov processes is nicely related to honesty theory in $L^{1}$ spaces
\cite{MTK} (see also \cite{MMTK} for related
topics). On the other hand, in a noncommutative context, there exists also
an important literature (relying on Kato's paper \cite{kato} or some
dual version) on quantum dynamical semigroups, e.g. \cite{Da2,Re, CF, CS, Fa, GQ, GS, S}; such semigroups acting on spaces of
operators arise in the theory of open quantum systems as models of
irreversible (albeit conservative) quantum dynamics. We mention that
quantum dynamical semigroups enjoy the {complete} positivity property
(a stronger property than the fact to leave invariant the positive cone) which
gives their generators a special structure (see e.g. \cite{Fa}).

More recently, in \cite{mkvo}, the honesty theory of perturbed substochastic semigroups in $L^{1}(\mu)$
spaces has been improved and extended in
different directions while a noncommutative version of \cite{mkvo}
was given in \cite{class}. The first goal of the present paper is to provide a
general theory in abstract \textit{state spaces} (i.e. real ordered Banach spaces such that the norm is additive on the positive cone) which covers both \cite{mkvo}
 and \cite{class}. The interest of this abstract approach is not
simply motivated by a unified presentation of \cite{mkvo}
 and \cite{class}: it provides us with an intrinsic treatment of honesty theory in much more
general spaces covering in particular preduals of abstract von Neumann
algebras or more generally subspaces of duals of abstract $C^{\ast }$%
-algebras (see for example \cite{thieme, MK3} on
measure-valued generalization of Kolmogorov equations on abstract measurable
spaces). We refer to E. B. Davies \cite[p. 30-31]{Da1} for the
relevance of the concept of abstract state spaces in probability theory,
quantum statistical mechanics, etc. For its most part, the general theory we
give follows closely \cite{mkvo, class} but we
provide also new informations on the structure of the set of
honest trajectories in the Banach space of bounded measures on a measurable
space and in the Banach space of trace class operators on a Hilbert space.
The second goal of this paper is to provide another approach
of honesty theory. This alternative approach of honesty relies on
Dyson-Phillips expansions (in contrast to the previous resolvent approach)
and is new even in $L^{1}(\mu )$ spaces. To this end, we give several fine
properties of Dyson-Phillips expansions. We also revisit a classical
generation theorem by T. Kato \cite{kato}. Finally, this alternative viewpoint on honesty presents the great advantage of
being adaptable to \textit{nonautonomous} problems \cite{evol}\bigskip

We recall briefly some properties of the class of Banach spaces we shall deal  with in this paper (more information on general real ordered Banach space can be recovered from \cite{depagter, batty}). In all this paper, we shall assume that $\X$ is a real ordered Banach space with a generating positive cone $%
\X_{+}$ (i.e. $\X=\X_{+}-\X_{+}$) on which the norm is
additive, i.e.
$$\left\| u+v\right\| =\left\| u\right\| +\left\| v\right\|\qquad \qquad u,v \in \X_{+}.$$
The additivity of the norm  implies
that the norm is monotone, i.e.
$$0 \leq u \leq v \Longrightarrow \|u\| \leq \|v\|.$$
In particular, the cone $\X_+$ is normal \cite[Proposition
1.2.1]{batty}. It follows easily that any bounded monotone sequence of $\X_+$ is convergent. A property playing an important role in this paper is the existence of a
linear positive functional $\mathbf{\Psi} $ on $\X$ which coincides with the norm on
the positive cone (see e.g. \cite[p. 30]{Da1}), i.e.
\begin{equation}\label{psi}
\mathbf{\Psi} \in \X^\star_+,\qquad \la\mathbf{\Psi}, u \ra
=\|u\|,\qquad u \in \X_+\end{equation}
Note that $\|\mathbf{\Psi}\|= 1.$ Indeed, given $u \in \X$, one has $u=u_1-u_2
\in \X$ with $u_i \in \X_+$ $(i=1,2)$ and $|\la
\mathbf{\Psi},u\ra|=|\,\|u_1\|-\|u_2\|\,| \leq \|u\|$. This proves that
$\|\mathbf{\Psi}\|\leq 1$ and the equality sign follows from \eqref{psi}.
We note also that by a Baire category argument there exists a constant $M >0$ such that each $u \in \X$ has a decomposition $u=u_1-u_2$ where $u_i \in \X_+$ and $\|u_i\|\leq M\|u\|$ $(i=1,2)$; i.e. the positive cone
$\X_+$ is {non-flat}, see \cite[Proposition 19.1]{depagter}. We recall that a \com   $\vt$ of bounded linear operators on $\X$ is called {
substochastic} (resp. stochastic) if $\v(t)$ is positive (i.e. leaves $\X_+$ invariant for any $t\geq 0$)  and $\|\v(t)u\| \leq \|u\|$ (resp. $\|\v(t)u\|=\|u\|$) for all $u \in \X_+$ and $t \geq 0.$ It is not difficult to see that a positive \com $\ut$ with generator $\A$ is substochastic (resp. stochastic) if and only if $\la \mathbf{\Psi},\A u\ra \leq 0$ (resp. $\la \mathbf{\Psi},\A u\ra=0$) for all $u\in \D(\A)_+=\D(\A) \cap \X_+$. Because of a lack ({a priori}) of a lattice structure, $\vt$  need not  be a contraction semigroup. However, one easily sees that $\|\v(t) \| \leq 2M$ for all $t
\geq 0;$ in particular, its type is
nonpositive.\bigskip

The general structure of the paper is the following: our general setting is an abstract state space $\X$, a substochastic \com $\ut$ on $\X$ with generator $\A$ and a linear operator $\B\::\:\D(\A) \to \X$ which is assumed to be positive (i.e. $\B\::\:\D(\A) \cap \X_+ \to \X_+$) and such that
$$\la \mathbf{\Psi}, \A u + \B u \ra \leq 0 \qquad \qquad u \in \D(\A) \cap \X_+.$$
In Section 2, we show that there exists a unique minimal substochastic \com $\Tt$ generated by an extension $\G$ of $\A+\B$. This result was first given by T. Kato \cite{kato} under a  {lattice} assumption on $\X$. Our purpose here is simply to show (by following essentially Kato's ideas) that the lattice assumption is actually unnecessary. We note that this result has been proved differently by means of Miyadera perturbations \cite{thieme} or by using Desch's theorem \cite{mmkl1}. We also show that the corresponding semigroup is given by a (strongly convergent) Dyson-Phillips expansion
$$\T(t)u=\sum_{n=0}^\infty \T_n(t)u$$
without using the theory of Miyadera perturbations. It turns out that the resolvent of $\G$ is given by the strongly convergent series
 $$(\l-\G)^{-1}u=\sum_{n=0}^\infty (\l-\A)^{-1}\left[\B(\l-\A)^{-1}\right]^n u, \qquad \l >0.$$
This series (which does \textit{not} converge a priori in operator norm) is
the corner-stone of a general honesty theory of the \com $\Tt$ given in Section 3 in the spirit of the recent results \cite{class,mkvo}. Besides the functional
$$\cc_0\::\:u \in \D(\G) \to -\la \mathbf{\Psi},\G u\ra$$
and its restriction $\cc$ to $\D(\A)$ we build up and study another functional
$$\overline{\cc}\::\:u \in \D(\G) \to \mathbb{R}$$
which has the properties that $\overline{\cc}_{|\D(\A)}=\cc$ and $\overline{\cc} \leq \cc_0$ on $\D(\G)_+=\D(\G) \cap \X_+$. The trajectory $\left(\T(t)u\right)_{t \geq 0}$ emanating from $u \in \X_+$ is said to be \textit{honest} if
$$\|\T(t)u\|=\|u\|-\overline{\cc}\left(\int_0^t \T(r)u\d r\right), \qquad \qquad \forall t \geq 0$$
or equivalently if
$$\overline{\cc}\left(\int_0^t \T(r)u\d r\right)={\cc_0}\left(\int_0^t \T(r)u\d r\right) \qquad \forall t \geq 0.$$
Various characterization of honesty are given; in particular we show that $\left(\T(t)u\right)_{t \geq 0}$ is honest if and only if $\lim_{n \to \infty}\|\left(\B(\l-\A)^{-1}\right)^nu\|=0$ which is equivalent to $(\l-\G)^{-1}u \in \D(\overline{\A+\B})$. Under the "conservativity" assumption
$$\la \mathbf{\Psi},\A u+\B u\ra=0,\qquad \qquad \forall u \in \D(\A),$$
the mass-preservation in time (i.e. $\|\T(t)u\|=\|u\|$ for any $t \geq 0$) holds if and only if the trajectory $\left(\T(t)u\right)_{t \geq 0}$ is honest. The semigroup $\Tt$ is said to be honest if all trajectories are honest. We show that the honesty of $\Tt$ is equivalent to the identity $\overline{\cc}=\cc_0$ or to the closure property $\G=\overline{\A+\B}.$ Actually, we extend most of the results of \cite{class,mkvo}; in particular we show that the set $\mathcal{H}$ of initial data giving rise to a honest trajectory is a closed hereditary subcone of $\X_+$ and provide a description of the order ideal $\mathcal{H-H}$ (induced by it) in the case where $\X$ is either the Banach space of self-adjoint trace class operators on a Hilbert space or the Banach space of bounded signed measures on a measurable space.

In Section 4, the Dyson-Phillips expansion is the corner-stone of \textit{%
another} honesty theory of trajectories. To this end, we build up and study
a new functional
\[
\widehat{\cc}:u\in \D(\G)\rightarrow
\mathbb{R}
\]%
and show in particular that $\widehat{\cc}_{\mid \D(\A)}=\cc$ and $%
\widehat{\cc}\leq \cc_{0}$ on $\D(\G)_{+}.$ To distinguish a priori the
second notion of honesty from the previous one, we say that a trajectory $(\T(t)u)_{t\geq  0}$ emanating from $u\in \mathcal{X}_{+}$ is
mild honest if%
\[
\left\Vert \T(t)u\right\Vert =\left\Vert u\right\Vert -\widehat{\cc}%
(\int_{0}^{t}\T(r)u\d r),\ \ t\geq 0.
\]%
Various characterizations of mild honesty are given; in particular we show
that $(\T(t)u)_{t\geq 0}$ is mild honest if and only if $%
\int_{0}^{t}\T(r)u\d r\in \D(\overline{\A+\B})$ or
if and only if the integral $ \B\int_{0}^{t}\T_{n}(r)u\d r$ converges strongly to $0$ as $n \to \infty$. This mild honesty is based on
several new fine properties of the operators $\T_{n}$. Finally we
prove that the functionals $\widehat{\cc}$ and $\overline{\cc}$ coincide showing
thus that the notions of honesty and mild honesty are actually equivalent.
Moreover, the equivalence of the two viewpoints on honesty theory provides
us with nontrivial additional results. As we already said it, a honesty
theory in terms of Dyson-Phillips expansions suggests a convenient tool for
the study of nonautonomous problems  \cite{evol}.

\section{Kato's generation theorem and first consequences}\label{sec:kato}

\subsection{Classical Kato's Theorem revisited}

Let $\ut$ be a substochastic \com on $\X$ with generator $\A$. Kato's generation theorem \cite{kato} provides a useful sufficient condition ensuring that some extension of $(\A+\B,\D(\A))$ generates a substochastic \com on $\X$:
\begin{theo}\label{kato}
Let $\ut$ be a substochastic \com on $\X$ with generator $\A$. Let
$\B\::\:\D(\A) \to \X$ be a positive linear operator satisfying:
\begin{equation}\label{hypprinc1}
\la \mathbf{\Psi}, (\A+\B)u \ra \leq 0, \qquad \forall u \in
\D(\A)_+:=\D(\A) \cap \X_+.\end{equation} Then, there exists an
extension $\G$ of $(\A+\B,\D(\A))$ that generates a
substochastic \com $\Tt$ on $X$. Moreover, for any $\lambda > 0$,
the resolvent of $\G$ is given by
\begin{equation}\label{reso}
(\l-\G)^{-1}u=\lim_{n \to \infty} (\l-\A)^{-1}\sum_{k=0}^n
\left[\B(\l-\A)^{-1}\right]^ku , \qquad u \in \X.\end{equation}
Finally, $\Tt$ is the smallest substochastic \com whose generator is an
extension of $(\A+\B,\D(\A))$.
\end{theo}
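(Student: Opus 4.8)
The plan is to follow Kato's original strategy, adapted to the non-lattice setting, by regularizing the perturbation: for $r \in [0,1)$ replace $\B$ by $r\B$ and use the fact that $r\B$ is then a \emph{bounded-type} (Miyadera-admissible) perturbation for which the series $\sum_{k \geq 0} (\l-\A)^{-1}[r\B(\l-\A)^{-1}]^k$ converges in operator norm. First I would check that, for fixed $r < 1$, $A + r\B$ with domain $\D(\A)$ generates a substochastic \com $\srt$ on $\X$: the key point is that, since $0 \leq r\B u \leq \B u$ for $u \in \D(\A)_+$ and applying the positive functional $\mathbf{\Psi}$ gives $\la\mathbf{\Psi}, r\B u\ra \leq \la\mathbf{\Psi},\B u\ra \leq -\la\mathbf{\Psi},\A u\ra < \infty$, one deduces that $\B(\l-\A)^{-1}$ is a positive bounded operator with $\la\mathbf{\Psi}, \B(\l-\A)^{-1}u\ra \leq \|u\|$ for $u \in \X_+$, hence its norm is controlled via non-flatness of $\X_+$ by $2M$; thus $\|r\B(\l-\A)^{-1}\| \leq 2Mr$ and for $r$ close to $0$ the Neumann series converges, while a continuation/monotonicity argument in $r$ (using positivity of all the terms $(\l-\A)^{-1}[\B(\l-\A)^{-1}]^k$ and the a priori bound $\|\cS_r(t)u\| \leq \|u\|$ on the positive cone, which follows from $\la\mathbf{\Psi}, (\A+r\B)u\ra \leq 0$) extends this to all $r \in [0,1)$.

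Next I would pass to the limit $r \uparrow 1$. For $u \in \X_+$ and $\l > 0$ the vectors $R_r(\l)u := (\l - \A - r\B)^{-1}u = \sum_{k=0}^\infty (\l-\A)^{-1}[r\B(\l-\A)^{-1}]^k u$ form a \emph{monotone increasing} family in $\X_+$ as $r \uparrow 1$ (each added term is positive and $r \mapsto r^k$ is increasing), and they are \emph{norm-bounded}: applying $\mathbf{\Psi}$ and using $\la\mathbf{\Psi},(\A+r\B)R_r(\l)u\ra \leq 0$, i.e. $\la\mathbf{\Psi}, (\l - \A - r\B)R_r(\l)u\ra \geq \l\|R_r(\l)u\| - \ldots$, one gets $\l\|R_r(\l)u\| \leq \|u\|$. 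Since $\X$ has the property that bounded monotone sequences converge (a consequence of normality of $\X_+$, stated in the excerpt), the limit $R(\l)u := \lim_{r\uparrow 1} R_r(\l)u$ exists; extend by linearity (using $\X = \X_+ - \X_+$) to all of $\X$. One then verifies that $R(\l)$ is a pseudo-resolvent — the resolvent identity $R(\l) - R(\mu) = (\mu - \l)R(\l)R(\mu)$ passes to the limit — with $\|R(\l)\| \leq 1/\l$, and that $R(\l)$ is injective (range of $R(\l)$ contains $\D(\A)$ which is dense, and $\l R(\l)u \to u$), so by the Hille–Yosida theorem $R(\l) = (\l-\G)^{-1}$ for a unique closed densely defined operator $\G$ generating a contraction \com $\Tt$ on $\X$; positivity of $\Tt$ follows from positivity of each $R_r(\l)$ hence of $R(\l)$, and substochasticity from $\la\mathbf{\Psi},\G u\ra \leq 0$ which is inherited in the limit. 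This establishes \eqref{reso}, and that $\G \supset A + \B$ follows since for $u \in \D(\A)$ one has $R(\l)(\l - \A - \B)u = u$ by dominated passage to the limit in $R_r(\l)(\l-\A-r\B)u = u + (1-r)R_r(\l)\B u$.

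The minimality statement — that $\Tt$ is the smallest substochastic \com whose generator extends $(\A+\B,\D(\A))$ — is where I would concentrate the remaining effort, and I expect it to be the main obstacle. The argument: let $\widetilde{\cT}(t)$ be any substochastic \com with generator $\widetilde{\G} \supset \A+\B$. Then for $\l > 0$ and $u \in \X_+$ one has $(\l-\widetilde{\G})^{-1}u \geq (\l-\A)^{-1}u$ (since $(\l-\widetilde\G)^{-1} = (\l-\A)^{-1} + (\l-\widetilde\G)^{-1}\B(\l-\A)^{-1}$ on a suitable core, and all operators involved are positive), and then inductively $(\l-\widetilde\G)^{-1}u \geq (\l-\A)^{-1}\sum_{k=0}^n [\B(\l-\A)^{-1}]^k u$ for every $n$; letting $n \to \infty$ gives $(\l-\widetilde\G)^{-1}u \geq (\l-\G)^{-1}u$. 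Since $(\l-\G)^{-1} \leq (\l-\widetilde\G)^{-1}$ for all $\l > 0$ on $\X_+$, the exponential formula $\T(t)u = \lim_{n\to\infty}(\tfrac{n}{t})^n(\tfrac{n}{t}-\G)^{-n}u$ (valid on the positive cone, where monotone limits behave well) yields $\T(t)u \leq \widetilde\cT(t)u$ for all $u \in \X_+$, $t \geq 0$. The delicate points are justifying the resolvent identity relating $(\l-\widetilde\G)^{-1}$ and $(\l-\A)^{-1}$ — one must show $\B(\l-\A)^{-1}u \in \D$ of the relevant closure and that $(\l-\widetilde\G)^{-1}(\l - \A - \B) = \mathrm{id}$ on $\D(\A)$, which uses only $\widetilde\G \supset \A + \B$ — and controlling the iteration of the exponential formula through monotone limits, for which I would again invoke convergence of bounded monotone sequences in $\X$.
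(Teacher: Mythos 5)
Your proposal follows the same overall strategy as the paper (Kato's approximation $\G_r=\A+r\B$, monotonicity in $r$, monotone passage to the limit $r\nearrow 1$), but it deviates in three places. (a) For the invertibility of $\l-\G_r$ for \emph{every} $r<1$ you rely on the single bound $\|r\B(\l-\A)^{-1}\|\leq 2Mr$ plus a continuation-in-$r$ argument; this can be made to work (each step gains a fixed fraction of order $1/(2M)$ of the remaining distance to $1$), but it is more laborious than the paper's route: since $\J(\l)=\B(\l-\A)^{-1}$ is positive and $\|\J(\l)u\|\leq\|u\|$ on $\X_+$ (which you prove), the estimate iterates to $\|\J(\l)^n u\|\leq\|u\|$ for all $n$, hence $\sup_n\|\J(\l)^n\|\leq 2M$ and $r_\sigma(\J(\l))\leq 1$, so the Neumann series for $r\J(\l)$ converges in operator norm for all $r<1$ at once. (b) You take the limit $r\nearrow1$ at the level of the resolvents $R_r(\l)$ (pseudo-resolvent, $\l R(\l)u\to u$, Hille--Yosida), whereas the paper takes it at the level of the semigroups $\mathcal{S}_r(t)$ (Dini, direct proof of strong continuity at $t=0$ via $\mathcal{S}_r(t)\geq\u(t)$ and $\mathbf{\Psi}$) and only afterwards identifies the resolvent; your resolvent-level route is legitimate and in some respects shorter. (c) Your minimality argument, iterating the identity $(\l-\G')^{-1}u=(\l-\A)^{-1}u+(\l-\G')^{-1}\B(\l-\A)^{-1}u$ for a competing generator $\G'\supset\A+\B$ and comparing with the partial sums in \eqref{reso}, is correct and slightly more direct than the paper's comparison of $(\l-\G')^{-1}$ with $(\l-\G_r)^{-1}$; contrary to your expectation, this is not the delicate part of the proof.

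One point must be repaired, and it is precisely where the non-lattice setting bites. You assert $\|R(\l)\|\leq 1/\l$; this is false in general, since the estimate $\l\|R_r(\l)u\|\leq\|u\|$ holds only for $u\in\X_+$, and on all of $\X$ one only gets $\|R(\l)\|\leq 2M/\l$. In the classical contractive setting a single resolvent bound suffices because $\|R(\l)^n\|\leq\|R(\l)\|^n$, but here the bound $2M/\l$ does not self-improve, and Hille--Yosida requires estimates on \emph{all} powers. You must therefore iterate the cone estimate to $\|R(\l)^n u\|\leq\l^{-n}\|u\|$ for $u\in\X_+$ (using positivity of the approximants and monotone strong convergence $R_r(\l)^n\to R(\l)^n$) and then extend by non-flatness to $\|R(\l)^n\|\leq 2M\l^{-n}$, exactly as the paper does for $(\l-\G_r)^{-1}$; the resulting semigroup is then a contraction only on $\X_+$, with $\|\T(t)\|\leq 2M$ on $\X$. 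Relatedly, substochasticity of $\T(t)$ is most directly obtained from these cone power estimates and the exponential formula (or as the monotone limit of the substochastic $\mathcal{S}_r(t)$), rather than from ``$\la\mathbf{\Psi},\G u\ra\leq0$ inherited in the limit'', which as stated is not immediate because an element of $\D(\G)_+$ need not be of the form $R(\l)u$ with $u\in\X_+$ (it can be salvaged via the Yosida approximation, but this should be said). Finally, a small algebra slip: the identity you pass to the limit should read $R_r(\l)(\l-\A-\B)u=u-(1-r)R_r(\l)\B u$, whose right-hand side indeed tends to $u$ since $\|R_r(\l)\|\leq 2M/\l$ uniformly in $r$.
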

The general strategy to prove such a result consists in two steps: show that
$$\G_r=\A+r\B,\qquad \D(\G_r)=\D(\A)$$ is a generator of a substochastic \com for any $0 < r < 1$ and \textit{then} use a monotonic convergence theorem by letting $r \nearrow 1$. The first step can be dealt with by means of three different arguments: a direct approach via Hille-Yosida estimates; the use of Miyadera perturbation theory \cite{thieme} or simply the use of Desch theorem \cite{mmkl1}. We revisit here the direct approach via Hille-Yosida estimates by T. Kato \cite{kato}.
 \begin{proof}  Our proof is inspired by the original one of  T. Kato
\cite{kato} that we adapt here to the more general situation we are
dealing with (recall in particular that substochastic semigroups are
contracting \textit{only} on $\X_+$). The proof consists in several
steps.

\noindent $\bullet$ \textit{Construction of $\Tt$:} For any $\l
>0,$ set $\J(\l)=\B(\l-\A)^{-1}$. Clearly, $\J(\l)$ is a bounded
linear positive operator on $\X$ and \eqref{hypprinc1} implies that
\begin{equation*}\begin{split}
\|\J(\l)u\|&=\la \mathbf{\Psi},\J(\l)u\ra \leq -\la   \mathbf{\Psi},\A(\l-\A)^{-1}u\ra \\
 &\leq   \|u\| - \l \|(\l-\A)^{-1}u\| \leq \|u\|, \quad \text{ for any } u \in \X_+ \text{ and any } \l
>0.\end{split}\end{equation*}
Iterating such an inequality leads to
$$\|(\J(\l))^n u\| \leq \|u\|,\qquad \text{ for any } u \in \X_+ \text{ and any } \l
>0, \;n \in \mathbb{N}$$
which implies that \begin{equation*} \|(\J(\l))^n\|
\leq 2M, \qquad \forall n \in \mathbb{N}, \:\l > 0\end{equation*}
where we recall (see the introduction) that $M >0$ is a positive constant such that any $u \in \X$ admits a decomposition $u=u_1-u_2$ with $u_i \in \X_+$ and $\|u_i\|\leq M\|u\|$ $(i=1,2)$. In particular, the spectral radius $r_\sigma(\J(\l))$ of the bounded operator $\J(\l)$ is such that
\begin{equation}\label{rJ}
r_\sigma(\J(\l)) \leq 1, \qquad \forall \l > 0.
\end{equation}
Moreover, the resolvent formula shows that $0 \leq \J(\mu) \leq
\J(\l)$ for any $0 < \l < \mu.$ Now, for any $0 \leq r < 1,$ let us
define  $\G_r$ as
$$\G_r=\A+r\B, \qquad \D(\G_r)=\D(\A).$$  Eq. \eqref{rJ} implies that  $(\l-\G_r)$ is
invertible for any $\l > 0$ with
\begin{equation}\label{kato2.3}
(\l-\G_r)^{-1}=(\l-\A)^{-1}\sum_{n=0}^\infty r^n
\left[\J(\l)\right]^n, \qquad 0 \leq r <1\end{equation} where the
series converges in $\mathscr{B}(\X)$.  For any fixed $f \in \X _+$,
set $v=(\l-\A)^{-1}f$, $\l >0$. One has $v \in \D(\A)_+$ and
\begin{equation*}\begin{split}
\|(\l-\G_r)v\|&=\|(\l-\A-r\B)v\| \geq \|(\l-\A)v\| - r \|\B v\|\\
&=\l \la \mathbf{\Psi}, v \ra - \la \mathbf{\Psi},
\A v \ra - r\la \mathbf{\Psi}, \B v \ra  \geq \l \|v\|.
\end{split}\end{equation*}
Now given $u \in \X_+$ and applying the above reasoning with
$f=\sum_{n=0}^\infty r^n \left[\J(\l)\right]^n u,$  we deduce from
\eqref{kato2.3} that
\begin{equation}\label{substoGr}
\|(\l-\G_r)^{-1}u\| \leq \l^{-1} \|u\|, \qquad \quad \text{ for any
}  u \in \X_+.\end{equation} Iterating this relation, we see that
$$\|\left[(\l-\G_r)^{-1}\right]^n u\| \leq \l^{-n} \|u\|, \qquad
\quad \text{ for any } u \in \X_+ \text{ and any } n \in
\mathbb{N}.$$ Then, since $\X_+$ is non flat, such an estimate
extends to the whole space $\X$ leading to
$$\|\left[(\l-\G_r)^{-1}\right]^n \|\leq \frac{2M}{\lambda^n}, \qquad \forall  \lambda >0,\:\: n \in \mathbb{N},$$ and one deduces from
 {Hille-Yosida Theorem} that, for any $0 \leq r < 1,$
$(\G_r,\D(\A))$ generates a $C_0$-semigroup $\srt$ in $\X$. Since
$(\l-\G_r)^{-1}$ is positive and because of \eqref{substoGr}, $\srt$
is a substochastic \com in $\X$. Moreover, the mapping $r \mapsto
(\l-\G_r)^{-1}u$ is nondecreasing for any fixed $\l >0$ and any $u
\in \X_+$ and one sees from the exponential formula
$$\mathcal{S}_r(t)u=\lim_{n \to \infty} \dfrac{n}{t}\left[\left(\dfrac{n}{t}-\G_r\right)^{-1}\right]^nu, \qquad u \in
\X_+,$$ that the mapping $r \in [0,1) \longmapsto \mathcal{S}_r(t)u$
is also nondecreasing for any fixed $t \geq 0$ and any $u \in \X_+$.
Since $\sup_{0 \leq r < 1}\|\mathcal{S}_r(t)\| \leq 2M$ for any $t
\geq 0$ and any bounded monotone sequence of $\X_+$ is convergent, one gets that
$\mathcal{S}_r(t)$ converges strongly to some operator $\T(t)$ for
any fixed $t \geq 0$ as $r \to 1$. Obviously, $\T(t)$ is a positive
contraction  on $\X_+$ with $\mathcal{S}_r(t) \leq \T(t)$ for any $0
\leq r < 1$ and any $t \geq 0$.

\noindent $\bullet$ \textit{$\Tt$ is a \com on $\X.$} Since
$\mathcal{S}_r(t+s)=\mathcal{S}_r(t)\mathcal{S}_r(s)$ for any $t,s
\geq 0$ and any $0 \leq r <1$, one has, at the limit,
$\T(t+s)=\T(t)\T(s),$ $\forall t,s \geq 0.$ Moreover, $\T(0)=Id$. To
prove that $\Tt$ is a \com on $\X$, it is enough to prove that $t
\geq 0 \mapsto \T(t)u$ is continuous at $t=0$ for any $u \in \X.$
Let us fix $\varepsilon >0$ and $u \in \X_+$. Since $\ut$ is a
strongly continuous, there exists $\delta >0$ such that
$\|\u(t)u-u\| < \varepsilon$ for any $0 \leq t \leq \delta.$ For
such a $t$, we see that, for any $r \in [0,1)$, since
$\mathcal{S}_r(t) \geq \u(t)$, one has
\begin{equation*}\begin{split}
\|\mathcal{S}_r(t)u-\u(t)u\|&=\la  \mathbf{\Psi},
\mathcal{S}_r(t)u-\u(t)u\ra =\la \mathbf{\Psi}, \mathcal{S}_r(t)u\ra
- \la \mathbf{\Psi},\u(t)u\ra\\
&\leq \|u\|-\|\u(t)u\| \leq \|u-\u(t)u\| < \varepsilon.
\end{split}\end{equation*}
One deduces from this estimate that
$$\|\mathcal{S}_r(t)u-u\| \leq \|\mathcal{S}_r(t)u-\u(t)u\| +
\|\u(t)-u\| \leq 2\varepsilon, \qquad \forall 0 < t < \delta.$$
 The important fact is that such
an estimate is uniform with respect to $r \in [0,1)$ so that,
letting $r \nearrow 1,$ one deduces that $\|\T(t)u-u\| \leq
2\varepsilon$ for any $0 < t < \delta.$ This shows that $\lim_{t \to
0}\T(t)u=u$ for any $u \in \X_+$ and, by linearity, the result is
true for any $u \in \X$ which proves that $\T(t)$ is strongly
continuous at $t=0$. 
We denote by $\G$ the generator of $\Tt$. Clearly, $]0,\infty[
\subset \varrho(\G)$ and
$$(\l-\G)^{-1} \text{ is positive}, \quad \|(\l-\G)^{-1}u\| \leq
\|u\|/\l, \qquad u \in \X_+.$$  Note that, since $\mathcal{S}_r(t)
\leq \T(t)$ for any $t \geq 0$ and any $r \in [0,1)$, one also has
$(\l-\G_r)^{-1} \leq (\l-\G)^{-1}$ for any $r \in [0,1)$ and any
$\l
>0$.

\noindent $\bullet$ \textit{$(\l-\G_r)^{-1}$ converges strongly to
$(\l-\G)^{-1}$ as $r \to 1$.} Since for any $u \in \X_+$ the
mapping $r \mapsto \mathcal{S}_r(t)u$ is nondecreasing, by Dini's
Theorem one has for
 any $T
 >0$ and any $u \in \X_+$:
 \begin{equation}\label{dini}\lim_{r \to 1}\sup_{0 \leq t \leq T}
 \|\mathcal{S}_r(t)u-\T(t)u\|=0.\end{equation}
 Now, writing
\begin{multline*}(\l-\G)^{-1}u-(\l-\G_r)^{-1}u=\int_0^T \exp(-\l
t)\left(\T(t)u-\mathcal{S}_r(t)u\right)\d t +\\
\int_T^\infty \exp(-\l t) \left(\T(t)u-\mathcal{S}_r(t)u\right)\d t,
\qquad \forall T \geq 0,\end{multline*} one sees from the uniform
convergence that the first integral converges to $0$ as $r \nearrow
1$ for any $T >0$ while the uniform bound $\sup_{t \geq
0}\|\mathcal{S}_r(t)u-\T(t)u\| \leq 2\|u\|$ allows us to let $T \to
\infty$ in the second integral leading to
$$\lim_{r \to 1}\|(\l-\G_r)^{-1}u-(\l-\G)^{-1}u\|=0, \qquad \forall
\l >0, u \in \X.$$ \noindent $\bullet$ \textit{Proof of Eq.
\eqref{reso}.} Let us fix $\l >0$.  From Eq. \eqref{kato2.3} and the
fact that $0 \leq (\l-\G_r)^{-1} \leq (\l-\G)^{-1}$ for any $0 \leq
r <1$, one has $\mathcal{R}_{r}^{(n)} \leq (\l-\G_r)^{-1} \leq
(\l-\G)^{-1},$ for any $ n \geq 1$  where
$\mathcal{R}_r^{(n)}(\l)=(\l-\A)^{-1}\sum_{k=0}^n r^k
\left[\J(\l)\right]^k.$ Letting $r \nearrow 1$, one gets
$$\mathcal{R}^{(n)}(\l):=(\l-\A)^{-1}\sum_{k=0}^n
\left[\J(\l)\right]^k \leq (\l-\G)^{-1}, \qquad \forall n \geq 1.$$
Since the sequence $\left(\mathcal{R}^{(n)}(\l) \right)_n$ is
nondecreasing, the strong limit $$
\mathcal{R}(\l):=\mathrm{s}-\lim_{n \to
\infty}\mathcal{R}^{(n)}(\l)$$  exists and $\mathcal{R}(\l) \leq
(\l-\G)^{-1}$. We also have $\mathcal{R}^{(n)}_r(\l) \leq
\mathcal{R}^{(n)}(\l) \leq \mathcal{R}(\l)$ for all $0 \leq r < 1$
and $n \geq 1$. Hence, $(\l-\G_r)^{-1}=\mathrm{s}-\lim_{n \to
\infty}\mathcal{R}_r^{(n)}(\l) \leq \mathcal{R}(\l)$ and
$(\l-\G)^{-1}=\mathrm{s}-\lim_{r \to 1}(\l-\G_r)^{-1} \leq
\mathcal{R}(\l).$ This proves finally that
$\mathcal{R}(\l)=(\l-\G)^{-1}$ and Eq. \eqref{reso} is proved.

\noindent $\bullet$ \textit{$\G$ is a closed extension of
$\A+\B$.} With the notation of the previous item, since
$\J(\l)=\B(\l-\A)^{-1}$,  one has
\begin{equation*}\begin{split}\mathcal{R}^{(n)}(\l)&=(\l-\A)^{-1}+(\l-\A)^{-1}\left(\sum_{k=0}^{n-1}[\J(\l)]^k\right)
\B(\l-\A)^{-1}\\
&=(\l-\A)^{-1}+\mathcal{R}^{(n-1)}(\l)\B(\l-\A)^{-1}.
\end{split}\end{equation*}
Thus, for any $u \in \D(\A)$,
$\mathcal{R}^{(n)}(\l)(\l-\A)u=u+\mathcal{R}^{(n-1)}(\l)\B u$ for
any $n \geq 1$. Letting $n \to \infty$, Eq. \eqref{reso} yields
$(\l-\G)^{-1}(\l-\A)u=u+(\l-\G)^{-1}\B u$ or equivalently,
$(\l-\G)^{-1}(\l-\A-\B)u=u.$ In particular, $u \in \D(\G)$ and
$(\l-\G)u=(\l-\A-\B)u$. This proves that $\G$ is an extension
of $\A$ and $\G$ is closed as the generator of a \com on $\X$.

\noindent $\bullet$ \textit{$\Tt$ is minimal.} Let
$(\mathcal{S}(t))_{t \geq 0}$ be a substochastic semigroup  in $\X$
whose generator $\G'$ is a closed extension of $\A+\B$. Let us
prove that $\mathcal{S}(t) \geq \T(t)$ for any $t \geq 0$. Actually,
for any $\l >0$, one has
$$(\l-\G')^{-1}-(\l-\G_r)^{-1}=(\l-\G')^{-1}(\G'-\G_r)(\l-\G_r)^{-1}$$
and, since the range of $(\l-\G_r)^{-1}$ is $\D(\A) \subset \D(\G')
\cap \D(\G_r)$, one
has
\begin{equation*}\begin{split}
(\l-\G')^{-1}-(\l-\G_r)^{-1}&=(\l-\G')^{-1}(\A+\B-\A-r\B)(\l-\G_r)^{-1}\\
&=(1-r)(\l-\G')^{-1}\B(\l-\G_r)^{-1}\end{split}\end{equation*} and
one sees that, at the (strong) limit, $(\l-\G')^{-1} \geq
(\l-\G)^{-1}$. From the exponential formula, one obtains
$\mathcal{S}(t) \geq \T(t)$ for any $t \geq 0$.\end{proof}
\subsection{On Dyson-Phillips expansion series}\label{sec:DP} It is possible to
strengthen the above Theorem \ref{kato} by proving that the
semigroup $\Tt$ is given by a Dyson-Phillips expansion series. Our
approach generalizes the result of \cite{rhandi} to the non lattice
case and relies on different arguments inspired by \cite[Chapter 8]{mmkbook}. We first need some preliminary result. Let us define the
space $\mathscr{C}_{sb}(\mathbb{R}^+, \mathscr{B}(\X))$ of strongly
continuous and bounded mappings $$\cS\::\: t \geq 0 \longmapsto
\cS(t) \in \mathscr{B}(\X)$$  endowed with the norm
$$\|\cS\|_{\infty}=\sup_{t \geq 0}\|\cS(t)\|_{\mathscr{B}(\X)}$$ which
makes it a Banach space. For any $\cS \in
\mathscr{C}_{sb}(\mathbb{R}^+, \mathscr{B}(\X))$, it is possible to
define the time-dependent operator $\mathscr{L}(\cS)(t)$ defined
over $\D(\A)$ by
$$\mathscr{L}(\cS)(t)\::\: u \in \D(\A) \longmapsto \int_0^t \cS(t-s)\B \u(s)u\d s \in
\X,\: t \geq 0.$$ We shall write that $\cS \in \mathscr{C}_{sb}(\mathbb{R}^+, \mathscr{B}^+(\X))$ if $\cS \in \mathscr{C}_{sb}(\mathbb{R}^+, \mathscr{B}(\X))$ and $\cS(t)$ is a positive operator in $\X$ for any $t \geq 0.$ One has the following
\begin{lemme} For any $\cS \in \mathscr{C}_{sb}(\mathbb{R}^+,
\mathscr{B}^+(\X))$ and any $t \geq 0$, $\mathscr{L}(\cS)(t)$ extends
uniquely to a bounded positive operator in $\X$, still denoted
$\mathscr{L}(\cS)(t)$. Moreover, for any $u \in \X$,  the mapping $t
\geq 0 \mapsto \mathscr{L}(\cS)(t)u \in \X$ is continuous.
\end{lemme}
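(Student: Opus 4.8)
The plan is to establish four things in turn: (i) the pointwise bound $\|\mathscr{L}(\cS)(t)u\|\leq\|\cS\|_{\infty}\|u\|$ for $u\in\D(\A)_+$, together with $\mathscr{L}(\cS)(t)u\in\X_+$; (ii) a bound valid on all of $\D(\A)$; (iii) the extension by density, with positivity; and (iv) the strong continuity in $t$. For (i) and (iv) I would first record that, fixing any $\l>0$, one has $\B\u(s)u=\J(\l)\u(s)(\l-\A)u$ for $u\in\D(\A)$ (since $\u(s)$ leaves $\D(\A)$ invariant and commutes with $\A$), so that $s\mapsto\B\u(s)u$ is norm-continuous; hence $s\mapsto\cS(t-s)\B\u(s)u$ is continuous and $\mathscr{L}(\cS)(t)u$ is a genuine $\X$-valued integral.

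For (i), let $u\in\D(\A)_+$. Then $\u(s)u\in\D(\A)_+$ ($\u(s)$ being positive), so $\B\u(s)u\in\X_+$, and since $\cS(t-s)$ is positive the integrand lies in $\X_+$; thus $\mathscr{L}(\cS)(t)u\in\X_+$ and $\|\mathscr{L}(\cS)(t)u\|=\la\mathbf{\Psi},\mathscr{L}(\cS)(t)u\ra=\int_0^t\la\mathbf{\Psi},\cS(t-s)\B\u(s)u\ra\,\d s$. Using $\la\mathbf{\Psi},\cS(r)w\ra=\|\cS(r)w\|\leq\|\cS\|_{\infty}\la\mathbf{\Psi},w\ra$ for $w\in\X_+$, then \eqref{hypprinc1} in the form $\la\mathbf{\Psi},\B\u(s)u\ra\leq-\la\mathbf{\Psi},\A\u(s)u\ra$, and finally $\la\mathbf{\Psi},\A\u(s)u\ra=\tfrac{\d}{\d s}\la\mathbf{\Psi},\u(s)u\ra=\tfrac{\d}{\d s}\|\u(s)u\|$, one obtains
\[
\|\mathscr{L}(\cS)(t)u\|\leq\|\cS\|_{\infty}\int_0^t\la\mathbf{\Psi},\B\u(s)u\ra\,\d s\leq-\|\cS\|_{\infty}\int_0^t\tfrac{\d}{\d s}\|\u(s)u\|\,\d s=\|\cS\|_{\infty}\bigl(\|u\|-\|\u(t)u\|\bigr)\leq\|\cS\|_{\infty}\|u\|.
\]

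The main obstacle is step (ii): passing from $\D(\A)_+$ to $\D(\A)$, since $\D(\A)$ need not be non-flat for the norm of $\X$. I would circumvent this via the resolvent. Given $u\in\D(\A)$ and $\l>0$, set $v:=(\l-\A)u$ and decompose $v=v_1-v_2$ with $v_i\in\X_+$, $\|v_i\|\leq M\|v\|$ (non-flatness of $\X_+$); then $u_i:=(\l-\A)^{-1}v_i\in\D(\A)_+$, $u=u_1-u_2$, and, $\ut$ being substochastic, $\|u_i\|=\|(\l-\A)^{-1}v_i\|\leq\|v_i\|/\l\leq M\bigl(\|u\|+\|\A u\|/\l\bigr)$. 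Combining with (i) and linearity of $\mathscr{L}(\cS)(t)$ on $\D(\A)$,
\[
\|\mathscr{L}(\cS)(t)u\|\leq\|\cS\|_{\infty}\bigl(\|u_1\|+\|u_2\|\bigr)\leq2M\|\cS\|_{\infty}\bigl(\|u\|+\|\A u\|/\l\bigr)\qquad\text{for every }\l>0,
\]
and letting $\l\to\infty$ yields $\|\mathscr{L}(\cS)(t)u\|\leq2M\|\cS\|_{\infty}\|u\|$ on $\D(\A)$. Since $\D(\A)$ is dense in $\X$, this proves (iii): $\mathscr{L}(\cS)(t)$ extends uniquely to a bounded operator on $\X$ with $\sup_{t\geq0}\|\mathscr{L}(\cS)(t)\|\leq2M\|\cS\|_{\infty}$; and the extension is positive because $\mathscr{L}(\cS)(t)$ is positive on $\D(\A)_+$, which is dense in the closed cone $\X_+$ (it is normal, hence closed, and $\l(\l-\A)^{-1}w\to w\in\X_+$ as $\l\to\infty$ with $\l(\l-\A)^{-1}w\in\D(\A)_+$).

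Finally, for (iv) I would first take $u\in\D(\A)$ and set $g(s):=\B\u(s)u=\J(\l)\u(s)(\l-\A)u$, which is norm-continuous; after the change of variable $\sigma=t-s$, $\mathscr{L}(\cS)(t)u=\int_0^t\cS(\sigma)g(t-\sigma)\,\d\sigma$. For $t'>t$ the difference $\mathscr{L}(\cS)(t')u-\mathscr{L}(\cS)(t)u$ splits as $\int_t^{t'}\cS(\sigma)g(t'-\sigma)\,\d\sigma+\int_0^t\cS(\sigma)\bigl(g(t'-\sigma)-g(t-\sigma)\bigr)\,\d\sigma$; the first piece has norm $O(t'-t)$ by boundedness of $\cS$ and of $g$ on compacts, and the second has norm $\leq t\,\|\cS\|_{\infty}\sup_{\sigma\in[0,t]}\|g(t'-\sigma)-g(t-\sigma)\|\to0$ by uniform continuity of $g$ on compacts (the case $t'<t$ being symmetric), so $t\mapsto\mathscr{L}(\cS)(t)u$ is continuous. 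For arbitrary $u\in\X$ one concludes immediately: picking $u_n\in\D(\A)$ with $u_n\to u$, the continuous maps $t\mapsto\mathscr{L}(\cS)(t)u_n$ converge to $t\mapsto\mathscr{L}(\cS)(t)u$ uniformly in $t$, by the uniform bound $\sup_{t\geq0}\|\mathscr{L}(\cS)(t)\|\leq2M\|\cS\|_{\infty}$ just obtained, whence continuity of the limit.
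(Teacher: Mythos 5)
Your proof is correct, and its overall skeleton is the one the paper follows: the key estimate $\|\mathscr{L}(\cS)(t)u\|\leq\|\cS\|_\infty\|u\|$ on $\D(\A)_+$ obtained from \eqref{hypprinc1} and the additivity of the norm (your computation via $\mathbf{\Psi}$ is the same as the paper's \eqref{cB}), then extension to all of $\X$ by density of $\D(\A)$, positivity by density of $\D(\A)_+$ in $\X_+$, and continuity in $t$ by a uniform approximation over compact time intervals using the bound $2M\|\cS\|_\infty$. The one step where you genuinely diverge is the passage from $\D(\A)_+$ to $\D(\A)$: the paper decomposes $u=u_1-u_2$ with $u_i\in\X_+$, $\|u_i\|\leq M\|u\|$, and regularizes by the means $u_n^i=n\int_0^{1/n}\u(s)u_i\,\d s\in\D(\A)_+$, exploiting that $u_n^1-u_n^2\to u$ in the graph norm while $\|u_n^i\|\to\|u_i\|$; you instead decompose $v=(\l-\A)u$ in $\X_+$ and pull back through the resolvent, getting $u=u_1-u_2$ with $u_i\in\D(\A)_+$ and $\|u_i\|\leq M(\|u\|+\|\A u\|/\l)$, and then let $\l\to\infty$ to remove the graph-norm term. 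Both devices yield the same constant $2M\|\cS\|_\infty$; yours avoids the approximation-inside-the-integral step (no limit under $\int_0^t\cS(t-s)\B\u(s)(\cdot)\,\d s$ is needed, only a scalar limit $\l\to\infty$ of an a priori bound), while the paper's regularization avoids introducing the resolvent at this stage. Your explicit justification of continuity on $\D(\A)$ via $\B\u(s)u=\J(\l)\u(s)(\l-\A)u$ is a welcome detail the paper leaves implicit. One small caveat: your parenthetical ``$\X_+$ is normal, hence closed'' is not a valid implication; closedness of the positive cone is a standing assumption on the state space (and is what makes both your positivity argument and the monotone-convergence facts of the paper work), so invoke it directly rather than deducing it from normality.
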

 \begin{proof}  It is clear that $\mathscr{L}(\cS)(t)$
is a nonnegative operator and, for any $u \in \D(\A)_+$ and $\l >0$
one has
$$\left\| \int_0^t \cS(t-s)\B \u(s)u\d s \right\| = \int_0^t
\|\cS(t-s)\B \u(s)u\|\d s  \leq \|\cS\|_\infty \int_0^t
\|\B\u(s)u\|\d s.$$ Now,
\begin{equation}\label{cB}\begin{split}
\int_0^t \|\B\u(s)u\|\d s&=\int_0^t \la \mathbf{\Psi}, \B\u(s)u
\ra
\d s\leq-\int_0^t \la \mathbf{\Psi}, \A\u(s)u \ra \d s\\
&=-\la \mathbf{\Psi}, \int_0^t \A\u(s)u \d s \ra=-\la
\mathbf{\Psi}, \int_0^t \dfrac{\d }{\d s}\u(s)u \d s \ra \\
&= \la \mathbf{\Psi}, u-\u(t)u\ra \leq
\|u\|.\end{split}\end{equation} Therefore,
\begin{equation}\label{cS+}
\left\| \int_0^t \cS(t-s)\B \u(s)u\d s \right\| \leq
\|\cS\|_\infty \|u\| \qquad \forall t \geq 0,\;\forall u \in
\D(\A)_+.\end{equation}
 Now, let $u \in \D(\A)$ be arbitrary and
let $u=u_1-u_2$ where $u_{i}\in \X_+$  are such that $\-u_i\|\leq M\|u\|$, $i=1,2$. Then, for any $n \geq 1$,
$u_n^{i}:=n\int_0^{1/n}\u(s)u_{i}\d s \in \D(\A)_+$ with $u_n^{i}
\longrightarrow u_{i}$ in $\X$ as $n \to \infty$,
 while
$$u_n^1-u_n^2=n\int_0^{1/n}\u(s)u\d s \longrightarrow u \qquad
\text{ in } \D(\A), \qquad i=1,2.$$ Therefore,
\begin{equation*}\begin{split}
\left\| \int_0^t \cS(t-s)\B \u(s)u\d s \right\| &=\lim_{n \to
\infty}\left\| \int_0^t \cS(t-s)\B \u(s)(u_n^1-u_n^2 )\d s \right\|\\
&\leq \lim_{n \to \infty}\left\| \int_0^t \cS(t-s)\B \u(s)u_n^1\d
s \right\| + \\
&\phantom{+++}\lim_{n \to \infty}\left\| \int_0^t \cS(t-s)\B
\u(s)u_n^2\d s \right\|
\end{split}
\end{equation*}
and Eq. \eqref{cS+} yields
\begin{equation*}
\left\| \int_0^t \cS(t-s)\B \u(s)u\d s \right\|  \leq
\|\cS\|_\infty
 \lim_{n \to
\infty}\left(\|u_n^1\|+\|u_n^2\|\right)=\|\cS\|_\infty
\left(\|u_1\|+\|u_2\|\right).\end{equation*} Consequently,
$$\left\| \int_0^t \cS(t-s)\B \u(s)u\d s \right\| \leq 2M
\|\cS\|_\infty  \|u\|, \qquad \forall u \in \D(\A).$$ Since $\D(\A)$ is dense in $\X$,
$\mathscr{L}(\cS)(t)$ extends uniquely to a bounded operator on
$\X$. We still denote $\mathscr{L}(\cS)(t)$ this extension. Notice
that, since $\D(\A)_+$ is dense in $\X_+$, the extension
$\mathscr{L}(\cS)(t)$ is still positive. One notes that, for any $u
\in \D(\A)$, the mapping $t \mapsto \mathscr{L}(\cS)(t)u$ is
continuous. Now, if $u \in \X$, considering a sequence $(u_n)_n
\subset \D(\A)$ which converges to $u$, one has, for any $T >0$
$$\sup_{t \in [0,T]} \left\|\mathscr{L}(\cS)(t)u_n-\mathscr{L}(\cS)(t)u_m\right\| \leq
2M \|\cS\|_\infty  \,\|u_n-u_m\|, \quad n,m \in \mathbb{N},$$ which
implies that the mapping $t \in [0,\infty[ \mapsto
\mathscr{L}(\cS)(t)u$ is continuous.\end{proof}

Arguing as in \cite[Lemma 8.4]{mmkbook}, we prove the following
\begin{theo}\label{rhandi} For any $t \geq 0$, the
following Duhamel formula holds:
\begin{equation}\label{duh}\T(t)u=\u(t)u + \int_0^t \T(t-s)\B\u(s)u \d s, \qquad t \geq 0,
\quad u \in \D(\A).\end{equation} Moreover, the semigroup $\Tt$
defined in Theorem \ref{kato} is given by the
\textit{\textbf{Dyson-Phillips}} expansion series
\begin{equation}\label{dyson}
\T(t)=\sum_{n=0}^{\infty}\mathscr{L}^n(\u)(t), \qquad \qquad t \geq
0\end{equation} where the series converges strongly in
$\X$.\end{theo}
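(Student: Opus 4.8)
The plan is to prove the statement in two parts: first the Duhamel identity \eqref{duh}, then bootstrap it into the Dyson--Phillips series \eqref{dyson}.

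\emph{Step 1: the Duhamel formula.} I would first obtain the identity for the regularized semigroups $\srt$ from the proof of Theorem \ref{kato}, whose generators are the subcritical perturbations $\G_r=\A+r\B$, $0\le r<1$. For $u\in\D(\A)=\D(\G_r)$ the map $s\mapsto\mathcal{S}_r(t-s)\u(s)u$ is $C^1$ on $[0,t]$ with derivative $-r\,\mathcal{S}_r(t-s)\B\u(s)u$ (differentiate the product, using that $\u(s)u\in\D(\A)$ and that $\mathcal{S}_r(\cdot)$ leaves $\D(\G_r)$ invariant), so integration from $0$ to $t$ yields $\mathcal{S}_r(t)u=\u(t)u+r\int_0^t\mathcal{S}_r(t-s)\B\u(s)u\,\d s$. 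I then let $r\nearrow1$: for $u\in\D(\A)_+$ the left-hand side tends to $\T(t)u$ by Theorem \ref{kato}, while inside the integral one has, for each fixed $s$, $\mathcal{S}_r(t-s)\B\u(s)u\to\T(t-s)\B\u(s)u$ together with the bound $\|\mathcal{S}_r(t-s)\B\u(s)u\|\le\|\B\u(s)u\|$, whose right-hand side is integrable in $s$ by \eqref{cB}; dominated convergence then gives \eqref{duh} on $\D(\A)_+$. Since $\B$ is $\A$-bounded (because $\|\B(\l-\A)^{-1}\|\le2M$) and every $u\in\D(\A)$ is a graph-norm limit of differences of elements of $\D(\A)_+$ — exactly the approximation used in the proof of the preceding Lemma — \eqref{duh} extends to all of $\D(\A)$.

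\emph{Step 2: the Dyson--Phillips series, upper bound.} Set $\T_n:=\mathscr{L}^n(\u)$. By the preceding Lemma and the bound $\|\mathscr{L}(\cS)(t)\|\le2M\|\cS\|_\infty$ obtained in its proof, $\mathscr{L}$ maps $\mathscr{C}_{sb}(\mathbb{R}^+,\mathscr{B}^+(\X))$ into itself, so each $\T_n$ is a well-defined member of that space and $\T_{n+1}(t)u=\int_0^t\T_n(t-s)\B\u(s)u\,\d s$ for $u\in\D(\A)$. I would then prove the \emph{upper bound} $\sum_{n=0}^N\T_n(t)u\le\T(t)u$ for all $u\in\X_+$ by induction on $N$: the case $N=0$ follows from \eqref{duh} since the integral term is positive, and the inductive step applies the hypothesis at the positive vector $\B\u(s)u$, integrates in $s$ using monotonicity of the Bochner integral, and uses \eqref{duh} again; the passage from $\D(\A)_+$ to $\X_+$ uses density of $\D(\A)_+$ in $\X_+$ and closedness of the cone. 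Hence, for $u\in\X_+$ the monotone bounded partial sums converge in $\X$ to some $\widetilde{\T}(t)u\le\T(t)u$, and $\widetilde{\T}(t)$ extends by linearity to a bounded positive operator on $\X$.

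\emph{Step 3: matching lower bound, and the main obstacle.} The delicate point is upgrading the inequality of Step 2 to an equality, and here I would return once more to $\srt$. Iterating the regularized Duhamel identity of Step 1 — legitimate since $\mathscr{L}$ is linear and stabilizes $\mathscr{C}_{sb}(\mathbb{R}^+,\mathscr{B}^+(\X))$ — gives, as bounded operators on $\X$, $\mathcal{S}_r=\sum_{n=0}^N r^n\T_n+r^{N+1}\mathscr{L}^{N+1}(\mathcal{S}_r)$. Because $\srt$ is substochastic, \eqref{cB} shows that $\mathscr{L}$ preserves the class of families $\cS$ satisfying $\|\cS(t)v\|\le\|v\|$ for all $v\in\X_+$; hence $\|\mathscr{L}^{N+1}(\mathcal{S}_r)(t)u\|\le\|u\|$ for $u\in\X_+$, so the remainder $r^{N+1}\mathscr{L}^{N+1}(\mathcal{S}_r)(t)u$ vanishes as $N\to\infty$ and $\mathcal{S}_r(t)u=\sum_{n=0}^\infty r^n\T_n(t)u$ for $u\in\X_+$. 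Since $r^n\le1$ and $\T_n(t)u\ge0$, this sum is $\le\widetilde{\T}(t)u$; letting $r\nearrow1$ and using $\mathcal{S}_r(t)u\to\T(t)u$ gives $\T(t)u\le\widetilde{\T}(t)u$, whence $\T(t)u=\widetilde{\T}(t)u=\sum_{n=0}^\infty\T_n(t)u$ with the series converging in $\X$, and linearity extends this to all $u\in\X$, proving \eqref{dyson}. I expect this last step to be the real obstacle: the dissipation estimate \eqref{cB} carries the \emph{critical} constant $1$, so the Dyson--Phillips series converges only strongly and there is no geometric decay of the ``intrinsic'' remainder $\mathscr{L}^{N+1}(\T)(t)u$ to exploit directly; the detour through $\srt$ is precisely what supplies the missing contraction factor $r^{N+1}<1$, after which the monotone limit $r\nearrow1$ transfers the identity back to $\T$. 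Everything else — the differentiation in Step 1, the dominated-convergence passages, and the $\D(\A)_+\to\X_+$ density arguments — is routine.
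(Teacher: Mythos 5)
Your proof is correct, but it follows a genuinely different route from the paper's. The paper stays entirely in the Laplace-transform picture: it derives the Duhamel formula \eqref{duh} from the resolvent identity $(\l-\G)^{-1}u-(\l-\A)^{-1}u=(\l-\G)^{-1}\B(\l-\A)^{-1}u$ (a consequence of \eqref{reso}) together with the uniqueness theorem for Laplace transforms, and then identifies the monotone limit $\widetilde{\T}(t)u=\sum_n \mathscr{L}^n(\u)(t)u\le \T(t)u$ with $\T(t)u$ by computing $\int_0^\infty e^{-\l t}\mathscr{L}^n(\u)(t)u\,\d t=(\l-\A)^{-1}\left[\B(\l-\A)^{-1}\right]^nu$ and invoking \eqref{reso} plus Laplace uniqueness once more. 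You instead work in the time domain with the approximating semigroups $\srt$ generated by $\G_r=\A+r\B$: a direct differentiation of $s\mapsto\mathcal{S}_r(t-s)\u(s)u$ gives the regularized Duhamel identity, the limit $r\nearrow 1$ (monotone convergence from Theorem \ref{kato} plus the dissipation bound \eqref{cB} for domination) yields \eqref{duh}, and the geometric remainder $r^{N+1}\mathscr{L}^{N+1}(\mathcal{S}_r)$ — contracting on $\X_+$ precisely because $\mathscr{L}$ preserves the substochastic bound, again via \eqref{cB} — gives the exact expansion of $\mathcal{S}_r$, from which the lower bound $\T(t)u\le\widetilde{\T}(t)u$ follows by letting $r\nearrow1$. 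What each approach buys: the paper's argument is shorter once \eqref{reso} is in hand and needs no differentiability of $s\mapsto\mathcal{S}_r(t-s)\u(s)u$, but it leans twice on the uniqueness theorem for vector-valued Laplace transforms (and on measurability of $t\mapsto\widetilde{\T}(t)u$); your argument is more elementary and self-contained, makes the role of the subcritical parameter $r$ as the missing contraction factor explicit, and explains "in real time" why the series sums exactly to $\T$, at the cost of redoing the $r\nearrow1$ limiting procedure that Theorem \ref{kato} already carried out at the resolvent level. Both proofs hinge on the same two ingredients: the estimate \eqref{cB} and the monotone approximation $\mathcal{S}_r\nearrow\T$.
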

\begin{proof} Let us first establish Duhamel formula.
We use the ideas of \cite[Lemma 1.4]{rhandi}. Let $u \in \D(\A)$ and
$\l > 0.$ We see from \eqref{reso} that
\begin{equation}\label{GBA}(\l-\G)^{-1}u-(\l-\A)^{-1}u=(\l-\G)^{-1}\B(\l-\A)^{-1}u.\end{equation}
Moreover, since $\B$ is $\A$-bounded, the mapping $t \in
[0,\infty) \mapsto \B\u(t)u \in \X$ is continuous for all $u \in
\D(\A)$ and
$$\B(\l-\A)^{-1}u=\B \int_0^\infty \exp(-\l t)\u(t)u \d t= \int_0^\infty
\exp(-\l t) \B \u(t)u \d t.$$ Since $(\l-\G)^{-1}$ is the Laplace
transform of $\Tt$, one gets from \eqref{GBA}
\begin{equation*}\begin{split}
\int_0^\infty \exp(-\l t)\left(\T(t)u-\u(t)u\right)\d
t&=\int_0^\infty\d t \int_0^\infty \exp(-\l(t+s)) \T(t)\B\u(s)u\d s\\
&=\int_0^\infty \exp(-\l t) \left(\int_0^t \T(t-s)\B\u(s)u \d
s\right)\d t.
\end{split}\end{equation*}
Finally, the uniqueness theorem for the Laplace transform provides
the conclusion. Let us prove now that $\Tt$ is given by the
Dyson--Phillips expansion \eqref{dyson}. 
Duhamel formula \eqref{duh} reads
$$\T(t)u=\u(t)u+\mathscr{L}(\v)(t)u, \qquad \forall t \geq 0, \quad
u \in \X$$ and, by iteration,
$$\T(t)u=\sum_{k=0}^n
\mathscr{L}^k(\u)(t)u+\mathscr{L}^{n+1}(\v)(t)u, \qquad t \geq 0,
\quad n \geq 1, \quad u \in \X.$$ In particular, for any $u \in
\X_+$, one has
\begin{equation}\label{sommen}\sum_{k=0}^n
\mathscr{L}^k(\u)(t)u \leq \T(t)u, \qquad n \geq 1, \quad u \in
\X_+\end{equation} and the series $\sum_{n=0}^\infty \mathscr{L}^n (\u)u $ is
convergent towards a limit that we denote $\widetilde{\v}(t)u.$
Notice that, for a given $u \in \X_+$, the mapping $t \in [0,\infty[
\mapsto \widetilde{\v}(t)u$ is measurable. One has
\begin{equation}\label{tilde}
\widetilde{\v}(t)u \leq \T(t)u, \qquad \forall u \in \X_+, \:t \geq
0.\end{equation} Now, it is not difficult to check by induction that
\begin{equation}\label{Ln}\int_0^\infty \exp(-\lambda t) \mathscr{L}^n(\u)(t) u \d t =
(\l-\A)^{-1}\left[\B(\l-\A)^{-1}\right]^nu\end{equation} so that,
$$\sum_{n=0}^\infty
(\l-\A)^{-1}\left[\B(\l-\A)^{-1}\right]^nu=\int_0^\infty
\exp(-\lambda t)\widetilde{\v}(t)u \d t$$ and Eq. \eqref{tilde}
together with Eq. \eqref{reso} yield
$$\int_0^\infty \exp(-\lambda t)\widetilde{\v}(t)u \d
t=\int_0^\infty \exp(-\lambda t)\T(t) u \d t, \qquad \forall u \in
\X_+, \quad \l >0.$$ The uniqueness theorem for the Laplace
transform implies then $\widetilde{\v}(t)u=\T(t)u$ for any $t \geq
0$ and any $u \in \X_+$ so that
$$\sum_{n=0}^\infty \mathscr{L}^n(\u)(t)u=\T(t)u, \qquad \forall u \in \X_+,
\quad t \geq 0.$$ Note that, according to Dini's convergence
theorem, the series converges uniformly in bounded time. One extends
then the convergence to arbitrary $u \in \X$ by linearity.
\end{proof}
\begin{nb}\label{remark24}
Notice that the family of operators $\T_n(t)=\mathscr{L}^n(\u)(t)  $
 $(n \in \mathbb{N},$ $t \geq 0)$,
 is nothing but the classical
Dyson-Phillips iterated usually defined by induction \cite[Chapter
7]{mmkbook}:
\begin{equation}\label{dysonphi2}\T_{n+1}(t)u=\int_0^t \T_{n } (t-s)\B\u(s)u\d s, \qquad \forall n \in \mathbb{N}, \:u \in \D(\A).\end{equation}
Notice that, according to \eqref{sommen}, one sees easily that
\begin{equation}\label{luisanotes1}
\sum_{k=0}^n \|\T_k(t)u\| \leq \|u\| \quad \text{ for any } \quad t \geq 0, \:\:u
\in \X_+.
\end{equation}
Moreover, for any $n \in \mathbb{N},$ the mapping $t \in [0,\infty) \mapsto \T_n(t)u$ is continuous for any $u \in
\X$. Finally, arguing as in \cite[p. 129]{arloban}, it is not difficult to prove that, for any $n \in \mathbb{N}$, the following relation holds:
\begin{equation}\label{luisanotes2}
\T_n(t+s)u=\sum_{k=0}^n\T_k(t)\T_{n-k}(s)u  \quad \text{ for any } u \in
\X, \:\:  t,s \geq 0.
\end{equation}
\end{nb}

\section{On honesty theory: resolvent approach}
From now, in all the paper, we assume that the assumptions of Theorem
\ref{kato} are  met.

\subsection{About some useful  functionals}

 Since the \com $\Tt$ is substochastic, one has, for any $u \in
 \X_+$,
 $$ \la \mathbf{\Psi}, \T(t)u-u\ra=\|\T(t)u\| - \|u\| \leq 0, \qquad
 \forall t \geq 0,\quad u \in \X_+.$$ In particular, if one chooses
 $u \in \D(\G)_+$ here above, since,
$$\la \mathbf{\Psi},\G  u \ra=\lim_{t \searrow 0}t^{-1}\la
\mathbf{\Psi}, \T(t)u-u\ra$$ one gets
\begin{equation}\label{eq:5}
\la \mathbf{\Psi}, \G u \ra \leq 0, \qquad \qquad u \in \D(\G)_+.
\end{equation}
 Because of this elementary but fundamental inequality, a
crucial role in the present approach will be based on the properties
of the following functional:
 $$\cc_0\:\::\:\:u \in \D(\G) \mapsto \cc_0(u)=-\la \mathbf{\Psi},\G
u \ra \in \mathbb{R}.$$ Because of \eqref{eq:5}, this functional
$\cc_0$ is nondecreasing, i.e. $\cc_0(u) \geq \cc_0(v)$ for any $
u,v\in \D(\G)$ with $u\geq v.$ Moreover, since $\|\mathbf{\Psi}\|\leq 1$, one has $\cc_0(u) \leq \|\G u\|$ for any $u \in \D(\G)$. We denote by $\cc$ its restriction
to $\D(\A)$, i.e.
$$\cc\:\::\:\:u \in \D(\A) \mapsto \cc (u)=-\la \mathbf{\Psi},\A u + \B
u \ra \in \mathbb{R}.$$ Let $\l >0$ be \textit{fixed}. The following
obvious identity
\begin{equation}\label{eq:6}
-\cc((\l-\A)^{-1}u)=\l
\|(\l-\A)^{-1}u\|+\|\B(\l-\A)^{-1}u\|-\|u\|,
\end{equation}
is valid for any $u \in \X_+$. Moreover,  the sequence
$\left(\sum_{k=0}^n (\l-\A)^{-1}[\B(\l-\A)^{-1}]^k
u\right)_n$  is nondecreasing and convergent to $(\l-\G)^{-1}u$.
Since $\cc(\cdot)$ is nondecreasing, one gets
$$\cc\left(\sum_{k=0}^n (\l-\A)^{-1}[\B(\l-\A)^{-1}]^k u\right) \leq
\cc_0((\l-\G)^{-1}u),$$ for  all $u \in \X_+$ and any $n \in
\mathbb{N}.$ The bounded and nondecreasing real sequence
$$\left(\cc\left(\sum_{k=0}^n (\l-\A)^{-1}[\B(\l-\A)^{-1}]^k
u\right)\right)_n$$ is therefore convergent. This convergence holds
for any $u \in \X=\X_+-\X_+$ and therefore defines a functional
$\overline{\cc}_\l$ (that depends {\it a priori} on $\l
>0$) on the domain of $\G$ by
$$\overline{\cc}_\l\left((\l-\G)^{-1}u\right) =\sum_{n=0}^\infty
\cc\left((\l-\A)^{-1}\left[\B(\l-\A)^{-1}\right]^nu\right),
\qquad u \in \X.$$ Following \cite{mkvo}, we derive another
expression for $\overline{\cc}_\l$ from the identity
$$(\l-\G_r)^{-1}u=\sum_{n=0}^\infty r^n
(\l-\A)^{-1}\left[\B(\l-\A)^{-1}\right]^nu, \qquad u \in
\X_+$$ established in the proof of Theorem \ref{kato}. We recall
that, denoting $\D_\A$ and $\D_\G$ the domain of $\A$ and $\G$
equipped with their respective graph norm,  the series is convergent
in $\D_\A$ and, since $(\l-\A)^{-1} \leq (\l-\G)^{-1}$, the
embedding $\D_\A \hookrightarrow \D_\G$ is continuous. Therefore,
$$\cc((\l-\G_r)^{-1}u)=\sum_{n=0}^\infty r^n
\cc\left((\l-\A)^{-1}\left[\B(\l-\A)^{-1}\right]^nu\right),
\qquad u \in \X_+.$$ Letting now $r \to 1$, one gets
$$\overline{\cc}_\l\left((\l-\G)^{-1}u\right)=\lim_{r \nearrow 1}\cc((\l-\G_r)^{-1}u)=\sum_{n=0}^\infty
\cc\left((\l-\A)^{-1}\left[\B(\l-\A)^{-1}\right]^nu\right).$$
One has the following basic result which can be proved exactly
as \cite[Prop. 1.1]{mkvo} (see also an alternative proof at the end of the paper, Theorem \ref{final}):
\begin{propo}\label{prop1.1.mkvo} Let $0 < \l < \mu$. Then,
\begin{enumerate}
\item ${\overline{\cc}_\l}_{|\D(\A)}=\cc;$
\item $\overline{\cc}_\l=\overline{\cc}_\mu .$
\end{enumerate}
This  defines a  functional $\overline{\cc} :=\overline{\cc}_\l$ for
any $\l.$ 
\end{propo}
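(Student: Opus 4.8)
The plan is to verify the two assertions directly from the series definition of $\overline{\cc}_\l$, using the resolvent identity to compare different values of the parameter. For item (1), I would take $u \in \D(\A)$ and write $u = (\l - \A)^{-1} v$ with $v = (\l - \A)u \in \X$. Since the series $\sum_{n\ge 0} (\l-\A)^{-1}[\B(\l-\A)^{-1}]^n v$ telescopes against $\cc$: indeed, using the defining relation $\cc((\l-\A)^{-1}w) = \l\|(\l-\A)^{-1}w\| + \|\B(\l-\A)^{-1}w\| - \|w\|$ from \eqref{eq:6} (valid first on $\X_+$, then on all of $\X$ by taking differences, since $\cc$ is the restriction of the linear-in-$\mathbf{\Psi}$ functional $\cc_0$), the $n$-th partial sum collapses. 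Writing $w_n = [\B(\l-\A)^{-1}]^n v$, one has $\cc((\l-\A)^{-1} w_n) = \l \|(\l-\A)^{-1}w_n\| + \|w_{n+1}\| - \|w_n\|$ on $\X_+$; but more efficiently one observes that for $u \in \D(\A)$ the only surviving term is the $n=0$ term after telescoping the norms, because $\mathcal{R}^{(N)}(\l)(\l - \A) u = u + \mathcal{R}^{(N-1)}(\l)\B u$ (the identity established in the proof of Theorem \ref{kato}) gives $\sum_{n=0}^{N}(\l-\A)^{-1}[\B(\l-\A)^{-1}]^n v \to u$ in $\D_\A$, and applying the $\D_\A$-continuous functional $\cc$ termwise and using that $\cc\circ(\l-\A)^{-1}\circ(\l-\A) = \cc$ on $\D(\A)$ yields $\overline{\cc}_\l(u) = \cc(u)$. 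I would present this as: $\cc$ is $\|\cdot\|_{\D_\A}$-continuous (being $-\langle \mathbf{\Psi}, (\A+\B)\cdot\rangle$ with $\B$ $\A$-bounded), the partial sums converge to $(\l-\G)^{-1}v = u$ in $\D_\A \hookrightarrow \D_\G$, hence $\overline{\cc}_\l((\l-\G)^{-1}v) = \lim_N \cc(\mathcal{R}^{(N)}(\l)v)$, and for $v = (\l-\A)u$ this limit is exactly $\cc(u)$.

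For item (2), I would fix $u \in \X_+$ (the general case following by $\X = \X_+ - \X_+$ and linearity of $w \mapsto \overline{\cc}_\l((\l-\G)^{-1}w)$ in $w$, which is itself a consequence of the series definition) and use the second expression $\overline{\cc}_\l((\l-\G)^{-1}u) = \lim_{r\nearrow 1}\cc((\l-\G_r)^{-1}u)$. The key computation is to relate $\cc((\l - \G_r)^{-1}u)$ across two values $\l < \mu$. Since $(\l - \G_r)^{-1} u \in \D(\A) = \D(\G_r)$, I would apply the functional identity $\cc(z) = -\langle \mathbf{\Psi}, (\A + \B)z\rangle$ and note that $(\A+\B)z = (\G_r + (1-r)\B)z$; more directly, one has $\|\,\cdot\,\|$-identities: for $z = (\l-\G_r)^{-1}u$ with $u \in \X_+$, $\cc_0$-type bookkeeping gives $\cc((\l-\G_r)^{-1}u) = \|u\| - \l\|(\l-\G_r)^{-1}u\| - (1-r)\|\B(\l-\G_r)^{-1}u\|$ after using $\la\mathbf{\Psi}, \G_r z\ra = \la\mathbf{\Psi}, (\l z - u)\ra$. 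Then I would let $r \nearrow 1$: the left side tends to $\overline{\cc}_\l((\l-\G)^{-1}u)$, and the right side tends to $\|u\| - \l\|(\l-\G)^{-1}u\| - \lim_{r\nearrow 1}(1-r)\|\B(\l-\G_r)^{-1}u\|$. The last limit, call it $\beta_\l(u)$, is therefore well-defined, and the claim $\overline{\cc}_\l = \overline{\cc}_\mu$ reduces to showing $-\l\|(\l-\G)^{-1}u\| - \beta_\l(u) = -\mu\|(\mu-\G)^{-1}u\| - \beta_\mu(u)$, i.e. that $\beta_\l(u) = \beta$ does not depend on $\l$ together with the resolvent identity $\l\|(\l-\G)^{-1}u\| - \mu\|(\mu-\G)^{-1}u\| = (\mu - \l)\langle\mathbf{\Psi}, (\l-\G)^{-1}(\mu-\G)^{-1}u\rangle$ matching the corresponding difference of $\beta$'s.

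The main obstacle I anticipate is controlling the boundary term $\beta_\l(u) = \lim_{r\nearrow 1}(1-r)\|\B(\l-\G_r)^{-1}u\|$ and proving it is $\l$-independent; this is where the structure genuinely enters, and it is essentially the content of \cite[Prop. 1.1]{mkvo}. I would handle it by the resolvent-equation trick: from $(\mu - \G_r)^{-1} - (\l-\G_r)^{-1} = (\l-\mu)(\mu-\G_r)^{-1}(\l-\G_r)^{-1}$ one gets $\B(\mu-\G_r)^{-1}u - \B(\l-\G_r)^{-1}u = (\l-\mu)\B(\mu-\G_r)^{-1}(\l-\G_r)^{-1}u$, and applying $\langle\mathbf{\Psi},\cdot\rangle$ and multiplying by $(1-r)$, the right-hand side is $O(1-r)\cdot\|(\l-\G_r)^{-1}u\| = O(1-r)$ since $\|(\l-\G_r)^{-1}u\| \le \|u\|/\l$, so the difference $\beta_\mu(u) - \beta_\l(u)$ vanishes. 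Assembling: $\beta$ is independent of $\l$, the resolvent identity for $\|(\l-\G)^{-1}u\|$ applied to $\langle\mathbf{\Psi},\cdot\rangle$ cancels the remaining $\l$-dependence, and therefore $\overline{\cc}_\l((\l-\G)^{-1}u) = \|u\| - \l\|(\l-\G)^{-1}u\| - \beta$ agrees for all $\l$; since every $v \in \D(\G)$ is of the form $(\l-\G)^{-1}u$ for a unique $u = (\l-\G)v$, this proves $\overline{\cc}_\l = \overline{\cc}_\mu$ as functionals on $\D(\G)$. Finally I would remark that combining (1) and (2) legitimizes writing $\overline{\cc}$ unambiguously, and that the alternative route promised in the statement (Theorem \ref{final}) will re-derive the same object from the Dyson–Phillips side.
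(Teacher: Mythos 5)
You have the right objects in hand (the series definition, the identity from the proof of Theorem \ref{kato}, the $\G_r$-representation and the boundary term $(1-r)\la \mathbf{\Psi},\B(\l-\G_r)^{-1}u\ra$), but both parts of your argument contain genuine errors. For item (1) you evaluate $\overline{\cc}_\l$ at the wrong point: $\overline{\cc}_\l$ is defined through the representation $v=(\l-\G)^{-1}w$, so for $u\in\D(\A)$ the relevant datum is $w=(\l-\G)u=(\l-\A-\B)u$, \emph{not} $v=(\l-\A)u$. With your choice one has $(\l-\G)^{-1}(\l-\A)u=u+(\l-\G)^{-1}\B u\neq u$, and the very identity you quote, $\mathcal{R}^{(N)}(\l)(\l-\A)u=u+\mathcal{R}^{(N-1)}(\l)\B u$, shows that the partial sums converge (in $\X$, not in $\D_\A$: the limit need not lie in $\D(\A)$) to $u+(\l-\G)^{-1}\B u$, not to $u$; so your telescoping computes $\overline{\cc}_\l\bigl(u+(\l-\G)^{-1}\B u\bigr)$ rather than $\overline{\cc}_\l(u)$. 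With the correct choice $w=(\l-\A-\B)u$ the partial sums telescope to $u-(\l-\A)^{-1}\bigl[\B(\l-\A)^{-1}\bigr]^{N}\B u$, and the whole content of item (1) is to show that $\cc$ of this boundary term tends to $0$ (via \eqref{eq:6}, the monotonicity of $\|[\B(\l-\A)^{-1}]^{N}\B u\|$ and the additivity of the norm, which makes $\|(\l-\A)^{-1}[\B(\l-\A)^{-1}]^{N}\B u\|$ summable); this step is entirely absent from your proposal.

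For item (2) the key lemma you invoke, namely that $\beta_\l(u)=\lim_{r\nearrow 1}(1-r)\la\mathbf{\Psi},\B(\l-\G_r)^{-1}u\ra=\la\bl,u\ra$ is independent of $\l$ for \emph{fixed} $u$, is false in general: once the proposition is known one gets $\la\bl,u\ra-\la\mathbf{\Xi}_\mu,u\ra=(\mu-\l)\la\bl,(\mu-\G)^{-1}u\ra$, which vanishes for all $u$ only when $\bl=0$, i.e.\ only in the honest case. What you actually need is the weaker statement $\beta_\l\bigl((\l-\G)v\bigr)=\beta_\mu\bigl((\mu-\G)v\bigr)$ for fixed $v\in\D(\G)$; as written you compare $\overline{\cc}_\l$ and $\overline{\cc}_\mu$ at the \emph{different} points $(\l-\G)^{-1}u$ and $(\mu-\G)^{-1}u$, so even if your identity held it would not yield equality of the two functionals on $\D(\G)$. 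Moreover your proof of $\l$-independence rests on the bound $\la\mathbf{\Psi},\B(\mu-\G_r)^{-1}w\ra\leq C\|w\|$ uniformly in $r$, which is exactly what fails in the dishonest case: the only available estimate is $(1-r)\la\mathbf{\Psi},\B(\mu-\G_r)^{-1}w\ra\leq\|w\|$ for $w\in\X_+$ (this is precisely why $\bl$ can be nonzero), so the difference you estimate is $O(1)$, not $O(1-r)$. Note that the paper itself does not argue this way: it quotes \cite[Prop.\ 1.1]{mkvo} and, as an alternative, identifies in Theorem \ref{final} every $\overline{\cc}_\l$ with the $\l$-free functional $\widehat{\cc}$ built from the Dyson--Phillips iterates, which yields both assertions at once; repairing your route would require proving the $v$-based (not $u$-based) invariance above, essentially redoing the computation of Lemma \ref{lemm1.4.mkvo} with the resolvent identity handled at fixed $v\in\D(\G)$.
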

\begin{nb} Let us point out that $\overline{\cc}$ is continuous with respect to
the graph norm of $\G$.
\end{nb}
The above definitions of functionals $\overline{\cc}$ and $\cc_0$
lead to the following:
\begin{defi}\label{defi:bl} For any $\l >0$, we define the functional $\bl \in
\X ^\star$ by
$$\la \bl, u \ra =\cc_0
\left((\l-\G)^{-1}u\right)-\overline{\cc}\left((\l-\G)^{-1}u\right),
\qquad u \in \X.$$
\end{defi}
One has the following  Lemma:
\begin{lemme}\label{lemm1.4.mkvo} For any $\l >0$ and $u \in \X$
\begin{equation*}
\la \bl, u \ra=\lim_{n \to \infty} \la \mathbf{\Psi},
\left[\B(\l-\A)^{-1}\right]^n u \ra=\lim_{r \nearrow 1}(1 - r)\la
\mathbf{\Psi}, \B(\l-\G_r)^{-1}u \ra
\end{equation*}
\end{lemme}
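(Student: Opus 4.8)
The plan is to bootstrap everything from one elementary identity relating the functional $\cc$, the state $\mathbf{\Psi}$, and the operators $\left[\B(\l-\A)^{-1}\right]^n$. Fix $\l>0$ and, for $u\in\X$, set $w_n=(\l-\A)^{-1}\left[\B(\l-\A)^{-1}\right]^n u$ and $a_n=\la\mathbf{\Psi},\left[\B(\l-\A)^{-1}\right]^n u\ra$, so that $a_0=\la\mathbf{\Psi},u\ra$, $(\l-\A)w_n=\left[\B(\l-\A)^{-1}\right]^n u$ and $\B w_n=\left[\B(\l-\A)^{-1}\right]^{n+1}u$. Writing $\A w_n=\l w_n-\left[\B(\l-\A)^{-1}\right]^n u$, one obtains at once
\[
\cc(w_n)=-\la\mathbf{\Psi},\A w_n+\B w_n\ra=-\l\la\mathbf{\Psi},w_n\ra+a_n-a_{n+1},\qquad n\geq 0.
\]
This is essentially the only computation needed; the two claimed formulas then come out of it by a telescoping summation and by a passage to the limit $r\nearrow 1$, respectively.

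For the first identity, I would sum the displayed relation for $n=0,\dots,N$, which telescopes to $\sum_{n=0}^{N}\cc(w_n)=-\l\sum_{n=0}^{N}\la\mathbf{\Psi},w_n\ra+a_0-a_{N+1}$, and then let $N\to\infty$. By the definition of $\overline{\cc}$ the left-hand side tends to $\overline{\cc}\bigl((\l-\G)^{-1}u\bigr)$, while $\sum_{n=0}^{N}w_n\to(\l-\G)^{-1}u$ in $\X$ by \eqref{reso}, whence $\sum_{n=0}^{N}\la\mathbf{\Psi},w_n\ra\to\la\mathbf{\Psi},(\l-\G)^{-1}u\ra$ since $\mathbf{\Psi}$ is bounded; in particular $a_{N+1}$ has a limit and $\lim_n a_n=a_0-\l\la\mathbf{\Psi},(\l-\G)^{-1}u\ra-\overline{\cc}\bigl((\l-\G)^{-1}u\bigr)$. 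Since $\G(\l-\G)^{-1}u=\l(\l-\G)^{-1}u-u$, we have $\cc_0\bigl((\l-\G)^{-1}u\bigr)=-\la\mathbf{\Psi},\G(\l-\G)^{-1}u\ra=a_0-\l\la\mathbf{\Psi},(\l-\G)^{-1}u\ra$, so the right-hand side above is exactly $\cc_0\bigl((\l-\G)^{-1}u\bigr)-\overline{\cc}\bigl((\l-\G)^{-1}u\bigr)=\la\bl,u\ra$, which is the first equality.

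For the second identity, I would instead use that on $\D(\A)$ one has $\A+\B=\G_r+(1-r)\B$. Writing $v_r:=(\l-\G_r)^{-1}u\in\D(\A)$, which satisfies $\G_r v_r=\l v_r-u$, this gives
\[
\cc(v_r)=-\la\mathbf{\Psi},(\A+\B)v_r\ra=-\la\mathbf{\Psi},\G_r v_r\ra-(1-r)\la\mathbf{\Psi},\B v_r\ra=a_0-\l\la\mathbf{\Psi},v_r\ra-(1-r)\la\mathbf{\Psi},\B v_r\ra.
\]
Solving for $(1-r)\la\mathbf{\Psi},\B v_r\ra$ and letting $r\nearrow 1$, I would invoke the two convergences already available: $v_r\to(\l-\G)^{-1}u$ in $\X$ (Theorem \ref{kato}) and $\cc\bigl((\l-\G_r)^{-1}u\bigr)\to\overline{\cc}\bigl((\l-\G)^{-1}u\bigr)$ (established just before Proposition \ref{prop1.1.mkvo}). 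Combined with the formula for $\cc_0\bigl((\l-\G)^{-1}u\bigr)$ obtained above, this yields $\lim_{r\nearrow 1}(1-r)\la\mathbf{\Psi},\B(\l-\G_r)^{-1}u\ra=\cc_0\bigl((\l-\G)^{-1}u\bigr)-\overline{\cc}\bigl((\l-\G)^{-1}u\bigr)=\la\bl,u\ra$.

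I expect no genuinely hard step here: every series and limit in play has already been shown to converge (the resolvent series \eqref{reso} strongly, and $\cc((\l-\G_r)^{-1}u)\to\overline{\cc}((\l-\G)^{-1}u)$), and $\mathbf{\Psi}$ being a bounded functional makes interchanging $\la\mathbf{\Psi},\cdot\ra$ with these limits automatic. The one point deserving care is that the monotone-convergence arguments underlying \eqref{reso} and the definition of $\overline{\cc}$ were carried out on $\X_+$, so I would first run the above for $u\in\X_+$ and then extend to arbitrary $u\in\X$ via the decomposition $u=u_1-u_2$ with $u_i\in\X_+$ and the linearity of $\cc$, $\cc_0$, $\overline{\cc}$ and of $\mathbf{\Psi}$. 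In particular, the existence of $\lim_n\la\mathbf{\Psi},[\B(\l-\A)^{-1}]^n u\ra$ is not assumed but is a by-product of the telescoping identity.
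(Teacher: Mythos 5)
Your proposal is correct and follows essentially the same route as the paper: the first identity comes from expanding $\cc$ on each term of the resolvent series and telescoping (your identity $\cc(w_n)=-\l\la\mathbf{\Psi},w_n\ra+a_n-a_{n+1}$ is exactly the paper's computation), and the second from writing $\A+\B=\G_r+(1-r)\B$, applying $\cc$ to $(\l-\G_r)^{-1}u$ and letting $r\nearrow 1$. Your extra remark about first working on $\X_+$ and extending by linearity is a sound (and harmless) precaution consistent with how $\overline{\cc}$ and \eqref{reso} were established.
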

{\begin{proof} One has to compute $\la \bl,
u\ra=\cc_0\left((\l-\G)^{-1}u\right)-\overline{\cc}\left((\l-\G)^{-1}u\right)$.
First,
\begin{equation*}\begin{split}
\overline{\cc}\left((\l-\G)^{-1}u\right)&=\sum_{n=0}^\infty
\cc\left((\l-\A)^{-1}\left(\B(\l-\A)^{-1}\right)^n u\right)\\
&=\sum_{n=0}^\infty \la \mathbf{\Psi},
-(\A+\B)(\l-\A)^{-1}\left(\B(\l-\A)^{-1}\right)^n
u\ra.
\end{split}
\end{equation*}
Now, the latter is equal to
$$\sum_{n=0}^\infty \bigg(\la \mathbf{\Psi},
\left(\B(\l-\A)^{-1}\right)^n u
-\left(\B(\l-\A)^{-1}\right)^{n+1}u-\l
(\l-\A)^{-1}\left(\B(\l-\A)^{-1}\right)^n u\ra\bigg).$$
Thus
\begin{equation*}\begin{split}
\overline{\cc}\left((\l-\G)^{-1}u\right)&=\la \mathbf{\Psi}, u \ra -
\lim_{n \to \infty} \la
\mathbf{\Psi},\left(\B(\l-\A)^{-1}\right)^n
u\ra - \\
&\phantom{++++ +++}\l \la \mathbf{\Psi}, \sum_{n=0}^\infty (\l-\A)^{-1}\left(\B(\l-\A)^{-1}\right)^n u\ra\\
&=\la \mathbf{\Psi}, u \ra - \lim_{n \to \infty} \la
\mathbf{\Psi},\left(\B(\l-\A)^{-1}\right)^n u\ra - \l \la
\mathbf{\Psi},
(\l-\G)^{-1}u\ra\\
&=\cc_0\left((\l-\G)^{-1}u\right)-\lim_{n \to \infty} \la
\mathbf{\Psi},\left(\B(\l-\A)^{-1}\right)^n u\ra
\end{split}
\end{equation*}
which proves the first assertion. On the other hand,
\begin{equation*}\begin{split}
\overline{\cc}\left((\l-\G)^{-1}u\right)&=\lim_{r \nearrow 1}
\cc\left((\l-\G_r)^{-1}u\right)\\
&=\lim_{r \nearrow 1} \la \mathbf{\Psi}, (\l-\A-r\B-\l-(1-r)\B)(\l-\G_r)^{-1}u\ra\\
&=\lim_{r \nearrow 1} \bigg(\la \mathbf{\Psi}, u \ra-\l \la
\mathbf{\Psi}, (\l-\G_r)^{-1}u\ra -(1-r)\la \mathbf{\Psi},
\B(\l-\G_r)^{-1}u\ra\bigg)\\
&=\la \mathbf{\Psi}, u \ra - \l \la \mathbf{\Psi}, (\l-\G)^{-1}u\ra
-\lim_{r \nearrow 1} (1-r)\la \mathbf{\Psi},
\B(\l-\G_r)^{-1}u\ra
\end{split}\end{equation*}
provides the second assertion.
\end{proof}}

We end this section with the following fundamental result:
\begin{theo}\label{equivalence} Let $\l >0$ and $u \in \X_+$ be fixed. The following assertions
  are equivalent:
\begin{enumerate}[(i)\:]
\item the set $\left\{[\B(\l-\A)^{-1}]^n u \right\}_n$ is relatively weakly compact;
\item $\lim_{n \to \infty} \|[\B(\l-\A)^{-1}]^n u\|=0$;
\item $\la \bl, u \ra=0$;
\item $(\l-\G)^{-1}u \in \D(\overline{\A+\B}).$
\end{enumerate}
\end{theo}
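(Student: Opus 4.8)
The plan is to prove the equivalences in the cyclic order $(ii)\Rightarrow(i)\Rightarrow(ii)$, $(ii)\Leftrightarrow(iii)$ and $(ii)\Leftrightarrow(iv)$, writing throughout $\J(\l)=\B(\l-\A)^{-1}$ and using freely that $\|\J(\l)w\|\leq\|w\|$ for every $w\in\X_+$, as established in the proof of Theorem \ref{kato}. A preliminary observation settles several equivalences at once: for $u\in\X_+$ one has $\|\J(\l)^n u\|=\langle\mathbf{\Psi},\J(\l)^n u\rangle$, and since $\J(\l)^{n+1}u=\J(\l)(\J(\l)^n u)$ with $\J(\l)^n u\in\X_+$, the sequence $\bigl(\|\J(\l)^n u\|\bigr)_n$ is nonincreasing; by Lemma \ref{lemm1.4.mkvo} its (nonnegative) limit is exactly $\langle\bl,u\rangle$. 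Hence $(ii)$ and $(iii)$ are trivially equivalent, and $(ii)\Rightarrow(i)$ holds because a norm-convergent sequence together with its limit is norm-compact, a fortiori relatively weakly compact.

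The core of the argument is $(i)\Rightarrow(ii)$. First I would record the quantitative dissipation estimate
\[
\|\J(\l)w\|\leq\|w\|-\l\,\|(\l-\A)^{-1}w\|,\qquad w\in\X_+,
\]
obtained by writing $\|\J(\l)w\|=\langle\mathbf{\Psi},\B(\l-\A)^{-1}w\rangle\leq-\langle\mathbf{\Psi},\A(\l-\A)^{-1}w\rangle$ (using \eqref{hypprinc1}) together with $\A(\l-\A)^{-1}w=\l(\l-\A)^{-1}w-w$. Applying this inequality successively to $w=\J(\l)^n u$ and telescoping yields $\l\sum_{n\geq 0}\|(\l-\A)^{-1}\J(\l)^n u\|\leq\|u\|$, so that $(\l-\A)^{-1}\J(\l)^n u\to 0$ in norm. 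Now if $(i)$ holds, the Eberlein--Smulian theorem provides a subsequence with $\J(\l)^{n_k}u\rightharpoonup v$ for some $v\in\X$; since $(\l-\A)^{-1}$ is bounded, $(\l-\A)^{-1}\J(\l)^{n_k}u\rightharpoonup(\l-\A)^{-1}v$, and weak lower semicontinuity of the norm forces $(\l-\A)^{-1}v=0$, whence $v=0$ because the resolvent is injective. Testing against $\mathbf{\Psi}$ and using that the whole sequence $\|\J(\l)^n u\|$ converges, we get $\lim_n\|\J(\l)^n u\|=\lim_k\langle\mathbf{\Psi},\J(\l)^{n_k}u\rangle=\langle\mathbf{\Psi},v\rangle=0$, i.e. $(ii)$.

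It remains to treat $(ii)\Leftrightarrow(iv)$, for which I would use the partial sums from the proof of Theorem \ref{kato}: set $w_n:=(\l-\A)^{-1}\sum_{k=0}^n\J(\l)^k u\in\D(\A)$, so that $w_n\to(\l-\G)^{-1}u$ by \eqref{reso}. A direct computation gives $(\l-\A-\B)w_n=u-\J(\l)^{n+1}u$, hence $(\A+\B)w_n=\l w_n-u+\J(\l)^{n+1}u$. If $(ii)$ holds the right-hand side converges (to $\l(\l-\G)^{-1}u-u$), so $(\l-\G)^{-1}u\in\D(\overline{\A+\B})$, which is $(iv)$. For the converse I would first check that, for $v\in\D(\A)$ and $n\geq 1$, $\J(\l)^n(\l-\A-\B)v=\J(\l)^{n-1}\B v-\J(\l)^n\B v$; pairing with $\mathbf{\Psi}$, letting $n\to\infty$ and applying Lemma \ref{lemm1.4.mkvo} twice (to the element $\B v\in\X$) yields $\langle\bl,(\l-\A-\B)v\rangle=0$ for all $v\in\D(\A)$. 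If now $(iv)$ holds, pick $v_m\in\D(\A)$ with $v_m\to(\l-\G)^{-1}u$ and $(\A+\B)v_m$ convergent; its limit is necessarily $\overline{\A+\B}(\l-\G)^{-1}u=\G(\l-\G)^{-1}u=\l(\l-\G)^{-1}u-u$, so $(\l-\A-\B)v_m\to u$, and since $\bl\in\X^\star$ is continuous we conclude $\langle\bl,u\rangle=0$, which is $(iii)$ and hence $(ii)$.

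The only genuinely delicate step is $(i)\Rightarrow(ii)$: upgrading relative weak compactness to norm convergence relies on the summability estimate above together with the injectivity of $(\l-\A)^{-1}$, whereas the remaining implications are essentially bookkeeping built on Theorem \ref{kato} and Lemma \ref{lemm1.4.mkvo}.
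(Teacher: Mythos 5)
Your proof is correct: the dissipation estimate, the telescoping bound $\l\sum_{n}\|(\l-\A)^{-1}[\B(\l-\A)^{-1}]^{n}u\|\leq\|u\|$, the identity $(\l-\A-\B)w_n=u-[\B(\l-\A)^{-1}]^{n+1}u$, and the annihilation property $\la\bl,(\l-\A-\B)v\ra=0$ for $v\in\D(\A)$ all check out, and together they close the cycle of equivalences. But in the two delicate implications you take a genuinely different route from the paper. For $(i)\Rightarrow(ii)$ the paper also extracts a weakly convergent subsequence, but it feeds the weak limit $z$ into the partial sums and uses the weak closedness of the graph of $\overline{\A+\B}$, together with $\G\supset\overline{\A+\B}$ and $(\l-\G)^{-1}u=v$, to conclude $\B(\l-\A)^{-1}z=0$; this yields $(ii)$ and $(iv)$ in one stroke. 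You instead kill the weak limit directly: the summability of $\|(\l-\A)^{-1}[\B(\l-\A)^{-1}]^{n}u\|$ (which is just the convergence of the series \eqref{reso} read with the additive norm) plus injectivity of $(\l-\A)^{-1}$ forces $v=0$; this is more elementary, needing no closure of $\A+\B$ at that stage, but it obliges you to supply $(ii)\Rightarrow(iv)$ separately, which you do with the same partial-sum identity under norm convergence. For $(iv)\Rightarrow(iii)$ the paper invokes the domain identity \eqref{jacek} from the literature, $\D(\overline{\A+\B})=(\l-\G)^{-1}\overline{(I-\B(\l-\A)^{-1})\X}$, whereas you prove directly that $\bl$ vanishes on $(\l-\A-\B)\D(\A)$ --- in essence the fixed-point property $\left[\B(\l-\A)^{-1}\right]^{\star}\bl=\bl$, which the paper only establishes later in Theorem \ref{spect} --- and then pass to the limit along an approximating sequence for $(\l-\G)^{-1}u$, using that $\G$ extends $\overline{\A+\B}$ to identify the limit of $(\A+\B)v_m$. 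The net effect: your argument is self-contained (no appeal to \eqref{jacek}) and quantitative, at the price of handling $(iv)$ in a separate step, while the paper's argument is shorter given the cited lemma and delivers $(iv)$ directly out of $(i)$.
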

\begin{proof}It is clear from the definition of $\bl$ that $\textit{(ii)} \implies
 \textit{(iii)}$  and that $\textit{(iii)} \implies \textit{(ii)} \implies
 \textit{(i)}.$

\textit{(i)} $\implies$ \textit{(ii)} and \textit{(iv)}.   Let
$v_n:=\sum_{k=0}^n(\l-\A)^{-1}\left[\B(\l-\A)^{-1}\right]^k
u.$ Clearly, $v_n \in \D(\A+\B)$ and $v_n$ converges to
$v=(\l-\G)^{-1}u$ in $\X$ as $n$ goes to infinity. Moreover, it is
not difficult to see that
$$(\l-\A-\B)v_n=u-\left[\B(\l-\A)^{-1}\right]^{n+1}u.$$
If some subsequence
$\left([\B(\l-\A)^{-1}]^{n_k}u\right)_k$ converges weakly
in $\X$ to some $z \in \X$, then $ (\l-\A-\B)v_{n_k}$
converges weakly to $u-\B(\l-\A)^{-1}z$ as $k \to \infty$.
It follows from the weak closedness of the graph
$\overline{\A+\B}$ that $v \in
\D(\overline{\A+\B})$ and
$$(\l-\overline{\A+\B})v=u-\B(\l-\A)^{-1}z.$$
Since $\G$ is a closed extension of $\A+\B$ and
$v=(\l-\G)^{-1}u$, the latter reads
$$u=u-\B(\l-\A)^{-1}z$$
so that $\B(\l-\A)^{-1}z=0$. Hence,
$[\B(\l-\A)^{-1}]^{n_k+1}u$ converges weakly to $0$ as $k
\to \infty$. In particular,
$$\lim_{k \to \infty}\la \mathbf{\Psi},\bigg[ \B(\l-\A)^{-1}\bigg]^{n_k+1}u\ra =0 \quad \text { and
}\quad \lim_{n \to \infty}\la \mathbf{\Psi},\bigg[
\B(\l-\A)^{-1}\bigg]^{n}u\ra=0$$ since the whole sequence
is always convergent. This proves \textit{(ii)}.  Notice also that
$v =(\l-\G)^{-1} u \in
  \D(\overline{\A+\B})$ and \textit{(iv)}  is proved.

\textit{(iv)} $\implies$ \textit{(iii)}. One can assume without loss
of generality that $\bl \neq 0$. Assume that $(\l-\G)^{-1} u \in
\D(\overline{\A+\B})$. According to the following identity
(see \cite[Lemma 4.5, p. 117]{arloban})
\begin{equation}\label{jacek}
\D(\overline{\A+\B})=(\l-\G)^{-1}\overline{(I-\B(\l-\A)^{-1})\X}
\end{equation}
one sees that there exists a sequence $(u_n)_n \subset
(I-\B(\l-\A)^{-1})\X$ such that $\lim_n u_n =u$. It is
easy to see that $\la \bl, u_n \ra=0$ for any $n \in \mathbb{N}$ so
that $\la \bl,u \ra=0.$\end{proof} One deduces from the above result
that $\D(\A+\B)$ is a core for $\G$ if and only if
$\bl=0$:
\begin{cor}\label{coro:equiva} One has $\G=\overline{\A+\B}$ if and only if $\bl=0$ for some (or
  equivalently for all) $\l >0.$\end{cor}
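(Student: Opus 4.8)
The plan is to obtain Corollary \ref{coro:equiva} as an essentially immediate consequence of Theorem \ref{equivalence}, combined with two elementary structural facts about $\G$: that $\G$ is always a closed extension of $\A+\B$ (hence of $\overline{\A+\B}$), as already established in the proof of Theorem \ref{kato}, and that $]0,\infty[\subset\varrho(\G)$, so that for every $\l>0$ the resolvent $(\l-\G)^{-1}$ is a bijection from $\X$ onto $\D(\G)$.

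First I would reduce the operator identity $\G=\overline{\A+\B}$ to an identity of domains. Since $\G$ is closed and $\G\supseteq\A+\B$, one has $\G\supseteq\overline{\A+\B}$; in particular $\overline{\A+\B}$ is the restriction of $\G$ to $\D(\overline{\A+\B})$, so $\G=\overline{\A+\B}$ holds if and only if $\D(\G)=\D(\overline{\A+\B})$, equivalently $\D(\G)\subseteq\D(\overline{\A+\B})$. Fixing any $\l>0$ and using $\D(\G)=(\l-\G)^{-1}\X$, this is equivalent to saying that $(\l-\G)^{-1}u\in\D(\overline{\A+\B})$ for every $u\in\X$.

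Next I would pass from $\X$ to the positive cone $\X_+$. Because $\D(\overline{\A+\B})$ is a linear subspace and $\X=\X_+-\X_+$, the requirement that $(\l-\G)^{-1}u\in\D(\overline{\A+\B})$ for all $u\in\X$ is equivalent to the same requirement for all $u\in\X_+$. Now the equivalence \textit{(iii)}$\Leftrightarrow$\textit{(iv)} of Theorem \ref{equivalence} states precisely that, for $u\in\X_+$, one has $(\l-\G)^{-1}u\in\D(\overline{\A+\B})$ if and only if $\la\bl,u\ra=0$. Hence $\G=\overline{\A+\B}$ is equivalent to $\la\bl,u\ra=0$ for all $u\in\X_+$, and since $\bl$ is a bounded linear functional and $\X_+$ is generating, this is exactly $\bl=0$.

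Finally, the independence with respect to $\l$ is automatic: for each fixed $\l>0$ the above chain shows that $\bl=0$ is equivalent to the single $\l$-independent statement $\G=\overline{\A+\B}$, so $\bl=0$ for some $\l>0$ if and only if $\bl=0$ for all $\l>0$. I do not expect any genuine obstacle here; the only step that deserves an explicit line is the reduction from $\X$ to $\X_+$, which rests on the facts that $\X_+$ is generating and that $\D(\overline{\A+\B})$ is a vector space, so that Theorem \ref{equivalence} — stated on the cone — suffices to conclude.
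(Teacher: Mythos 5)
Your proof is correct and follows the same route the paper intends: the corollary is stated there as a direct consequence of Theorem \ref{equivalence}, exactly via the equivalence (iii)$\Leftrightarrow$(iv) applied to $(\l-\G)^{-1}u$ for $u$ ranging over the (generating) cone, with $\G\supseteq\overline{\A+\B}$ and $\D(\G)=(\l-\G)^{-1}\X$ supplying the reduction. Your spelled-out steps (closedness of $\G$, linearity of $\D(\overline{\A+\B})$ and of $\bl$, and the $\l$-independence coming from the $\l$-free statement $\G=\overline{\A+\B}$) are precisely the routine details the paper leaves implicit.
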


\begin{nb} For $v \in \D(\G)_+$ one can show as in \cite[Proposition
  1.6]{mkvo} that $v \in \D(\overline{\A+\B})$ if and only if
  $\cc_0(v)=\overline{\cc}(v)$ which strengthens Proposition \ref{prop1.1.mkvo}.\end{nb}

\subsection{On honest trajectories}
We note that, for any $u \in \X_+$ and any $t  \geq 0$, one has
$$
\ds\int_0^t \T(s)u \d s \in \D(\G)\quad  \text{ with }\quad \T(t)u-
u=\G\int_0^t \T(s)u \d s.$$ Since the semigroup is positive, one has
\begin{equation}\label{equalityG}\|\T(t)u\|-\| u\|=  -\cc_0\left(\int_0^t \T(s)u \d
s\right).\end{equation}
\begin{defi}\label{defi:hon}
Let $u \in \X_+$ be given. Then, the trajectory
$(\T(t)u)_{t \geq 0}$ is said to be \textit{\textbf{honest}} if and
only if
$$\|\T(t)u\|=\| u\|-\overline{\cc}\bigg(\int_0^t \T(s)u \d
s\bigg),\quad \text{ for any }   t\geq 0.$$  The whole
$C_0$-semigroup $\Tt$ will be said to be honest if all trajectories
are honest.
\end{defi}

\begin{nb} Note that, in the spirit of \cite{mkvo}, it is
possible to define a more general concept of \textit{local} honest trajectory on an
 interval $\mathcal{I} \subset [0,\infty)$ by
$$\overline{\cc}\bigg(\ds \int_s^t \T(r)u\d r\bigg)=\cc_0\bigg(\ds
\int_s^t \T(r)u \d r\bigg), \qquad \text{ for any } t,s \in
\mathcal{I},\:t \geq s.$$ We do not try to elaborate on this point here.
\end{nb}

\begin{nb}\label{nb:cci} One can deduce from Theorem \ref{equivalence} and Corollary \ref{coro:equiva}
the following: given $u \in \X_+$, one sees from  \eqref{equalityG}
that $(\T(t)u)_{t \geq 0}$ is honest if and only if
$$\overline{\cc}\bigg(\ds \int_s^t \T(r)u\d r\bigg)=\cc_0\bigg(\ds
\int_s^t \T(r)u \d r\bigg) \text{ for any  } t \geq s \geq 0.$$ Moreover,
it is easy to see that this is equivalent to $\overline{\cc} (
\int_0^t \T(r)u\d r )=\cc_0 ( \int_0^t \T(r)u \d r )$ for any $t
\geq 0.$
\end{nb}

The link between honest trajectory and the functional $\bl$ given by
Definition \ref{defi:bl} is provided by the following:
\begin{theo}\label{char}
Let $u \in \X_+$. The trajectory $(\T(t)u)_{t \geq 0}$ is honest if
and only if $\la \bl, u\ra =0$ for all/some $\l>0.$
\end{theo}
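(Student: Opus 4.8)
The plan is to reduce the honesty of $(\T(t)u)_{t\ge 0}$ to the vanishing of a single nonnegative, nondecreasing scalar function, and then to recognise $\la\bl,u\ra$ as a multiple of its Laplace transform. Fix $u\in\X_+$, set $w_t:=\int_0^t\T(s)u\,\d s\in\D(\G)_+$, and put $g(t):=\cc_0(w_t)-\overline{\cc}(w_t)$. By \eqref{equalityG} one has $\cc_0(w_t)=\|u\|-\|\T(t)u\|$, so Definition \ref{defi:hon} together with Remark \ref{nb:cci} says precisely that the trajectory is honest if and only if $g\equiv 0$ on $[0,\infty)$. Since $\cc_0$ and $\overline{\cc}$ are linear on $\D(\G)$, for $0\le s\le t$ one gets $g(t)-g(s)=\cc_0\big(\int_s^t\T(r)u\,\d r\big)-\overline{\cc}\big(\int_s^t\T(r)u\,\d r\big)\ge 0$, because $\int_s^t\T(r)u\,\d r\in\D(\G)_+$ and $\overline{\cc}\le\cc_0$ on $\D(\G)_+$ (recalled in the introduction); as $g(0)=0$, the function $g$ is nonnegative and nondecreasing.

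Next I would establish the integral representation $(\l-\G)^{-1}u=\l\int_0^\infty\exp(-\l t)\,w_t\,\d t$, with convergence in the graph norm $\D_\G$ of $\G$. The substochasticity of $\Tt$ gives $\|w_t\|\le t\|u\|$ and $\|\G w_t\|=\|\T(t)u-u\|\le 2\|u\|$, so both $t\mapsto\exp(-\l t)w_t$ and $t\mapsto\exp(-\l t)\G w_t$ are Bochner integrable on $[0,\infty)$; since $\G$ is closed it commutes with the integral, and Fubini's theorem yields $\l\int_0^\infty\exp(-\l t)w_t\,\d t=\int_0^\infty\exp(-\l s)\T(s)u\,\d s=(\l-\G)^{-1}u$, the last equality being the Laplace representation of the resolvent (recall $\Tt$ has nonpositive type). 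In particular the integral converges in $\D_\G$.

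The key step is then to apply to this identity the functional $\cc_0-\overline{\cc}$, which is linear and bounded on $\D_\G$ (for $\cc_0$ this is clear since $|\cc_0(v)|\le\|\G v\|$; for $\overline{\cc}$ it is the remark following Proposition \ref{prop1.1.mkvo}). Using Definition \ref{defi:bl}, this gives, for every $\l>0$,
$$\la\bl,u\ra=(\cc_0-\overline{\cc})\big((\l-\G)^{-1}u\big)=\l\int_0^\infty\exp(-\l t)\,g(t)\,\d t.$$
From here both implications are immediate: if $(\T(t)u)_{t\ge 0}$ is honest then $g\equiv 0$ and the right-hand side vanishes for every $\l>0$; conversely, if $\la\bl,u\ra=0$ for some $\l>0$, then the nonnegative nondecreasing function $g$ has vanishing Laplace transform at that $\l$, which forces $g\equiv 0$ on $[0,\infty)$, hence the trajectory is honest. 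This also accounts for the ``for all/some'' in the statement: $\la\bl,u\ra$ vanishes for one $\l$ exactly when it vanishes for all of them.

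The point requiring care is the passage $(\l-\G)^{-1}u=\l\int_0^\infty\exp(-\l t)w_t\,\d t$ with convergence in the graph norm of $\G$, which is what licenses moving the bounded functional $\cc_0-\overline{\cc}$ under the integral sign; it rests on the closedness of $\G$, the uniform bound $\|\G w_t\|\le 2\|u\|$ coming from substochasticity, and an application of Fubini's theorem. Everything else is routine manipulation of the functionals $\cc_0$, $\overline{\cc}$ and $\bl$ of Section 3.
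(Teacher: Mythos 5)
Your proof is correct and follows essentially the same route as the paper's: both represent $(\l-\G)^{-1}u$ as the Laplace transform of $t\mapsto\int_0^t\T(s)u\,\d s$ converging in the graph norm of $\G$, commute the $\D_\G$-continuous functionals $\cc_0$ and $\overline{\cc}$ with the integral, and identify $\la\bl,u\ra$ with the transform of $\cc_0-\overline{\cc}$ along the trajectory. The only (welcome) addition is that you make explicit the nonnegativity and monotonicity of $g$, which cleanly justifies the ``for all/some $\l$'' equivalence that the paper leaves implicit.
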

 {\begin{proof} We recall that, for any $\l >0$,
\begin{equation}\label{integral}(\l-\G)^{-1} u = \int_0^\infty \exp(-\l t)\T(t)u \d t=\l
\int_0^\infty \exp(-\l t) \bigg(\int_0^t \T(s)u \d s\bigg)\d
t.\end{equation} Moreover, the function $t \longmapsto\int_0^t
\T(s)u d s$ is continuous and linearly bounded as a
$\D_\G$-function. This means that the above outer integral in
\eqref{integral} is convergent in $\D_\G$ and commute with $\cc_0$.
Moreover, according to Prop. \ref{prop1.1.mkvo}, it also commutes
with $\overline{\cc}$ so that
$$\cc_0\bigg((\l-\G)^{-1}u\bigg)=\l \int_0^\infty \exp(-\l t)
\cc_0   \bigg(\int_0^t \T(s)u \d s\bigg) \d t$$ and
$$\overline{\cc}\bigg((\l-\G)^{-1}u\bigg)=\l \int_0^\infty \exp(-\l t)
\overline{\cc}  \bigg(\int_0^t \T(s)u \d s\bigg) \d t.$$ One sees
therefore that $\cc_0   \bigg(\ds \int_0^t \T(s)u \d
s\bigg)=\overline{\cc} \bigg(\ds \int_0^t \T(s)u \d s\bigg)$ for any
$t \geq 0$ is equivalent to
$\cc_0\bigg((\l-\G)^{-1}u\bigg)=\overline{\cc}\bigg((\l-\G)^{-1}u\bigg)$
for any $\l >0$ and proves the Theorem.
\end{proof}}

\begin{nb} Notice that the whole semigroup $\Tt$ is honest if and only
$\G=\overline{\A+\B}$ and this is also equivalent to
$\bl=0$ for some / all $\l >0$.
\end{nb}

\subsection{On an order ideal invariant under $\Tt$}
We already know that, for any $u \in \X_+$, the property $\la \bl,
u\ra
 =0$ is independent of the choice of $\l >0.$ This allows us to
 define the set
 \begin{equation}\label{H1}
 \mathcal{H}=\bigg\{u \in \X_+\,;\,\la \bl,u\ra=0 \text{
for any } \l >0 \bigg\}.\end{equation} Notice that, by virtue of
Theorem \ref{char}, $\mathcal{H}$ is precisely the set of initial positive
data $u$ giving rise to honest trajectories:
$$\mathcal{H}=\bigg\{u \in \X_+\, ;\,(\T(t)u)_{t \geq
0} \text{ is honest}\bigg\}.$$
 One has
the following
\begin{propo}\label{closed} The set $\mathcal{H}$ is  invariant
under $\Tt$ and $(\l-\G)^{-1}$ $(\l >0)$.  Moreover, for any $u \in
\mathcal{H}$, if $\,\mathcal{I}_u=\left\{z \in \X_+\,;\,\exists p
\in \mathbb{R}_+ \text{ such that } \, pu-z \in \X_+\right\}$ then
$\overline{\mathrm{span}(\mathcal{I}_u)} \cap \X_+ \subset
\mathcal{H}$.
\end{propo}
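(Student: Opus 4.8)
The statement has three assertions: (a) $\mathcal{H}$ is invariant under $(\l-\G)^{-1}$ for all $\l>0$; (b) $\mathcal{H}$ is invariant under $\Tt$; and (c) for any $u\in\mathcal{H}$, the closure of $\mathrm{span}(\mathcal{I}_u)$ intersected with $\X_+$ lies in $\mathcal{H}$. The natural ordering is (a), then (c) partially, then (b), since the semigroup invariance is usually deduced from resolvent invariance via the exponential formula — but here I expect it is cleaner to use the characterization of $\mathcal{H}$ via honest trajectories together with the additivity/cocycle structure, so let me proceed that way.

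For (a): fix $u\in\mathcal{H}$ and $\mu>0$; I want $(\mu-\G)^{-1}u\in\mathcal{H}$, i.e. $\la\bl,(\mu-\G)^{-1}u\ra=0$ for all $\l>0$. By Theorem~\ref{equivalence}, $u\in\mathcal{H}$ is equivalent to $\lim_n\|[\B(\l-\A)^{-1}]^n u\|=0$, and also (taking $\l=\mu$) to $(\mu-\G)^{-1}u\in\D(\overline{\A+\B})$. I would use the resolvent identity to write $(\mu-\G)^{-1}u$ in terms of $(\l-\G)^{-1}$ and show the relevant iterates still tend to zero; concretely, from \eqref{reso} one has $[\B(\l-\A)^{-1}]^n(\mu-\G)^{-1}u = (\mu-\G)^{-1}[\B(\l-\A)^{-1}]^n u$ modulo commuting a bounded resolvent past the iterates — more precisely, since $\G$ commutes with its own resolvent and $\B(\l-\A)^{-1}$ relates to $\G$ through \eqref{reso}, one checks $\la\bl,(\mu-\G)^{-1}u\ra$ reduces to $\la\bl,u\ra$ times a bounded factor, or vanishes because $(\mu-\G)^{-1}u$ already sits in $\D(\overline{\A+\B})$ and hence has $\la\Xi_\mu,\cdot\ra=0$, and one transfers this across $\l$ using Lemma~\ref{lemm1.4.mkvo} (the value $\la\bl,v\ra$ is $\l$-independent for $v\in\X_+$, as used in \eqref{H1}). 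Invariance under $\Tt$ then follows either from the exponential formula applied to this resolvent invariance, or directly: $(\T(t)u)$ honest means $\|\T(t)u\|=\|u\|-\overline{\cc}(\int_0^t\T(r)u\d r)$ for all $t$; using the cocycle $\int_0^{t}\T(r)\T(s)u\d r = \int_0^{t+s}\T(r)u\d r - \int_0^s\T(r)u\d r$ and the fact that $\overline{\cc}$ and $\cc_0$ agree on $\int_0^t\T(r)u\d r$ for a honest trajectory (Remark~\ref{nb:cci}), one shows the trajectory emanating from $\T(s)u$ is again honest, i.e. $\T(s)u\in\mathcal{H}$.

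For (c): this is where the additive-norm structure does real work, and I expect it to be the main obstacle. The set $\mathcal{I}_u$ is the ``order interval cone'' generated by $u$; I want to show $\mathcal{H}$, which is a subcone of $\X_+$, contains $\overline{\mathrm{span}(\mathcal{I}_u)}\cap\X_+$. Since $0\le z\le pu$ monotonicity of the norm and of the functional $\cc_0$ (hence of $\overline{\cc}$ through the series defining it) should give that honesty is \emph{hereditary}: if $u$ is honest and $0\le z\le u$ then $z$ is honest. Indeed $0\le [\B(\l-\A)^{-1}]^n z\le [\B(\l-\A)^{-1}]^n (pu)$ by positivity, and the norm is monotone, so $\|[\B(\l-\A)^{-1}]^n z\|\to 0$; thus $\mathcal{I}_u\subset\mathcal{H}$. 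Then I need $\mathcal{H}$ to be closed (for the $\overline{(\cdot)}$) and stable under the vector-space operations inside $\X_+$: closedness follows because $\la\bl,\cdot\ra$ is continuous (it is in $\X^\star$, Definition~\ref{defi:bl}), so $\mathcal{H}=\bigcap_\l\ker\bl\cap\X_+$ is closed; and if $z_1,z_2\in\mathcal{H}$ and $z_1-z_2\in\X_+$ or $z_1+z_2$ is considered, linearity of each $\bl$ gives $\la\bl,z_1\pm z_2\ra=0$, so any element of $\mathrm{span}(\mathcal{I}_u)\cap\X_+$ is in $\mathcal{H}$; taking closures and using continuity of $\bl$ finishes it. The subtle point to get right is that $\overline{\mathrm{span}(\mathcal{I}_u)}\cap\X_+$ need not be contained in $\mathcal{I}_u$ or even in a single order interval, so one genuinely needs the linear-functional description $\mathcal{H}=\X_+\cap\bigcap_{\l>0}\ker\bl$ rather than an order-theoretic argument alone; I would state this identification explicitly and then the result is immediate from continuity and linearity of the $\bl$.

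Thus the skeleton is: first record $\mathcal{H}=\X_+\cap\bigcap_{\l>0}\ker\bl$ and that each $\bl\in\X^\star$; deduce closedness and the hereditary property from positivity plus Theorem~\ref{equivalence}; prove resolvent-invariance via the series \eqref{reso} and Lemma~\ref{lemm1.4.mkvo}; prove $\Tt$-invariance via the cocycle identity for $\int_0^t\T(r)u\d r$ together with Remark~\ref{nb:cci} (or via the exponential formula from the resolvent case); and finally assemble (c) by noting $\mathcal{I}_u\subset\mathcal{H}$, then $\mathrm{span}(\mathcal{I}_u)\cap\X_+\subset\mathcal{H}$ by linearity of the $\bl$, then pass to the closure by continuity of the $\bl$. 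The only real danger spot is making the commutation argument in the resolvent-invariance step clean — I would hedge by instead invoking $(\mu-\G)^{-1}u\in\D(\overline{\A+\B})$ and showing this domain is stable under $(\l-\G)^{-1}$ using \eqref{jacek}, which is purely formal.
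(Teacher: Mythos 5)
Most of your outline matches the paper: the $\Tt$-invariance via splitting $\int_0^t=\int_0^{t_0}+\int_{t_0}^t$ and linearity of $\overline{\cc}$ is exactly the paper's argument, and so is the treatment of $\mathcal{I}_u$ (positivity of $\B(\l-\A)^{-1}$, monotonicity of the norm, then linearity and continuity of $\bl$ to pass to $\overline{\mathrm{span}(\mathcal{I}_u)}\cap\X_+$). The genuine gap is in the resolvent-invariance step, and unfortunately it sits precisely in the two mechanisms you commit to. First, the commutation $\left[\B(\l-\A)^{-1}\right]^n(\mu-\G)^{-1}u=(\mu-\G)^{-1}\left[\B(\l-\A)^{-1}\right]^n u$ has no justification: $\B(\l-\A)^{-1}$ and $(\mu-\G)^{-1}$ are built from different operators and do not commute in general, and nothing in \eqref{reso} gives such an identity. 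Second, your declared fallback --- that $\D(\overline{\A+\B})$ is stable under $(\l-\G)^{-1}$, ``purely formal'' from \eqref{jacek} --- is false exactly in the case of interest, namely when $\bl\neq 0$. Indeed, by Theorem \ref{equivalence} one has, for $v\in\X_+$, that $(\l-\G)^{-1}v\in\D(\overline{\A+\B})$ if and only if $\la\bl,v\ra=0$; since $\D(\A)_+$ is dense in $\X_+$ and $\bl$ is positive and continuous, $\bl\neq 0$ forces the existence of $v\in\D(\A)_+$ with $\la\bl,v\ra>0$, and for such $v$ one has $v\in\D(\A)\subset\D(\overline{\A+\B})$ while $(\l-\G)^{-1}v\notin\D(\overline{\A+\B})$. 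So the stability you invoke is equivalent to what you are trying to prove for the particular element $(\mu-\G)^{-1}u$, and asserting it for the whole domain is simply wrong; the argument is circular.

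The repair is short and is what the paper does: for $u\in\mathcal{H}$ you know $(\cc_0-\overline{\cc})\left((\nu-\G)^{-1}u\right)=0$ for \emph{every} $\nu>0$; then for $\nu\neq\mu$ the resolvent identity
$(\nu-\G)^{-1}(\mu-\G)^{-1}u=\frac{1}{\mu-\nu}\left[(\nu-\G)^{-1}u-(\mu-\G)^{-1}u\right]$
and the linearity of $\cc_0-\overline{\cc}$ on $\D(\G)$ give $\la\mathbf{\Xi}_\nu,(\mu-\G)^{-1}u\ra=0$, which by Theorem \ref{char} (``some $\l$'' suffices) yields $(\mu-\G)^{-1}u\in\mathcal{H}$. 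Alternatively, since your direct cocycle proof of $\Tt$-invariance is sound, you could deduce the resolvent invariance from it via $(\mu-\G)^{-1}u=\int_0^\infty e^{-\mu t}\T(t)u\,\d t$ and the continuity and positivity of $\mathbf{\Xi}_\nu$; either route closes the gap, but as written your step (a) does not. (A minor further caution: Lemma \ref{lemm1.4.mkvo} does not say that the value $\la\bl,v\ra$ is independent of $\l$, only that its vanishing is; this does not affect the rest of your argument.)
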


 \begin{proof} Let $u \in \mathcal{H}$. This means that
$$\|\T(t)u\|-\|u\|=-\overline{\cc}\bigg(\int_0^t \T(s)u \d s\bigg), \qquad \forall t \geq 0.$$
Let $t_0 >0$ be fixed and set $v=\T(t_0)u$. One has
$\|v\|-\|u\|=-\overline{\cc}\bigg(\ds\int_0^{t_0} \T(s)u \d s\bigg)$
and, for any $t \geq t_0$,
$$\|\T(t-t_0)v\|-\|u\|=-\overline{\cc}\bigg(\int_0^t \T(s)u \d
s\bigg)=-\overline{\cc}\bigg(\int_0^{t_0} \T(s)u \d
s\bigg)-\overline{\cc}\bigg(\int_{t_0}^t \T(s)u \d s\bigg)$$ so that
$$\|\T(t-t_0)v\|=\|v\|-\overline{\cc}\bigg(\int_{t_0}^t \T(s)u \d
s\bigg)=\|v\|-\overline{\cc}\bigg(\int_{0}^{t-t_0} \T(s)v \d
s\bigg), \qquad \forall  t \geq t_0.$$ In other words, $v \in
\mathcal{H}$ and $\mathcal{H}$ is invariant under the action of
$\Tt$. Let $\l >0$ and   $u \in \mathcal{H}$ be fixed. One has
$\cc_0 ((\l-\G)^{-1}u )=\overline{\cc} ((\l-\G)^{-1}u )$ and $\cc_0
((\mu-\G)^{-1}u )=\overline{\cc} ((\mu-\G)^{-1}u )$ for any $\mu
>0$. One sees as a direct application of the resolvent formula  that
$$\cc_0\bigg((\mu-\G)^{-1}(\l-\G)^{-1}u\bigg)=\overline{\cc}\bigg((\mu-\G)^{-1}(\l-\G)^{-1}u
\bigg), \quad \forall \mu >0$$ which amounts to $(\l-\G)^{-1}u \in
\mathcal{H}.$ Finally,   let $u \in \mathcal{H}$ and $z \in
\mathcal{I}_u$ be fixed, there is some nonnegative real number $p$
such that $p u-z \in \X_+$. Then, for any $n \in \mathbb{N}$,
$$[\B(\l-\A)^{-1}]^{n+1}z \leq p [\B(\l-\A)^{-1}]^{n+1}u.$$
Since  $\la \mathbf{\Xi}_\l,u \ra=0$, Lemma \ref{lemm1.4.mkvo}
clearly implies that $$\lim_{n \to \infty} \la \mathbf{\Psi},
\left[\B(\l-\A)^{-1}\right]^{n+1}z \ra =0$$  and
$(\T(t)z)_{t \geq 0}$ is honest according to Theorem
\ref{equivalence}. This proves that $\mathcal{I}_u \subset
\mathcal{H}$ and, since $\bl$ is a continuous and positive linear
form on $\X$, one deduces easily that
$\overline{\mathrm{span}(\mathcal{I}_u)} \cap \X_+ \subset
\mathcal{H}$.
\end{proof}

Thanks to the above structure of $\mathcal{H}$, it is possible to
provide sufficient conditions ensuring that the whole semigroup is honest.
\begin{theo}\label{theo:irr}\mbox{}
\begin{enumerate}
\item If $\mathcal{H}$ contains a
quasi-interior element $u$, then the whole semigroup $\Tt$ is
honest.
\item Assume $\Tt$ to be irreducible. Let there exists $u \in
\X_+ \setminus \{0\}$ such that $(\T(t)u)_{t \geq 0}$ is honest.
Then, the whole semigroup $\Tt$ is honest.\end{enumerate}
\end{theo}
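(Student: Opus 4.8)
The plan is to deduce both statements from Proposition~\ref{closed}, which records the structure of the set $\mathcal{H}$ of initial data generating an honest trajectory: $\mathcal{H}$ is a closed subcone of $\X_+$, it is hereditary, invariant under $\Tt$ and under every resolvent $(\l-\G)^{-1}$ ($\l>0$), and $\overline{\mathrm{span}(\mathcal{I}_u)}\cap\X_+\subseteq\mathcal{H}$ whenever $u\in\mathcal{H}$; recall also (Theorem~\ref{char} together with the definition of $\mathcal{H}$) that honesty of $\Tt$ is equivalent to $\mathcal{H}=\X_+$.

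For (1), I would observe that a quasi-interior element $u\in\X_+$ is, by definition, one for which the principal ideal $\mathcal{I}_u=\{z\in\X_+:\ pu-z\in\X_+\ \text{for some}\ p\geq 0\}$ is dense in $\X_+$; as $\X_+$ is generating this amounts to $\overline{\mathrm{span}(\mathcal{I}_u)}=\X$. If in addition $u\in\mathcal{H}$, then Proposition~\ref{closed} forces $\X_+=\overline{\mathrm{span}(\mathcal{I}_u)}\cap\X_+\subseteq\mathcal{H}$, hence $\mathcal{H}=\X_+$ and $\Tt$ is honest. One may equivalently argue through the functional $\bl$ of Definition~\ref{defi:bl}, which by Lemma~\ref{lemm1.4.mkvo} is a positive continuous linear form on $\X$: since $\langle\bl,u\rangle=0$ and $\bl$ is positive, $\bl$ vanishes on the (dense) ideal $\mathcal{I}_u$, hence on $\X_+$ and so on $\X$; thus $\bl=0$ for every $\l>0$, and Corollary~\ref{coro:equiva} then gives $\G=\overline{\A+\B}$, which is honesty of $\Tt$.

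For (2), the aim is to reduce to (1). Pick $u\in\X_+\setminus\{0\}$ with $(\T(t)u)_{t\geq 0}$ honest, that is $u\in\mathcal{H}\setminus\{0\}$, fix $\l>0$, and put $v=(\l-\G)^{-1}u$. By Proposition~\ref{closed} one still has $v\in\mathcal{H}$, and $v\neq 0$ because $(\l-\G)^{-1}$ is injective. Now irreducibility of the positive semigroup $\Tt$ means precisely that its resolvent is strictly positive, i.e. $(\l-\G)^{-1}$ maps $\X_+\setminus\{0\}$ into the set of quasi-interior elements of $\X_+$; hence $v$ is such a quasi-interior element and belongs to $\mathcal{H}$, so part (1) applies. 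If instead one wishes to use only the bare definition of irreducibility (no nontrivial closed $\Tt$-invariant order ideal), one argues with $\mathcal{J}:=\overline{\mathrm{span}(\mathcal{H})}$: it is $\Tt$-invariant (because $\mathcal{H}$ is and each $\T(t)$ is bounded and linear) and nonzero (it contains $u$); moreover every $\bl$ ($\l>0$) is a continuous linear form vanishing on $\mathcal{H}$, hence on $\mathcal{J}$, so $\mathcal{J}\cap\X_+\subseteq\mathcal{H}$ and therefore $\mathcal{J}\cap\X_+=\mathcal{H}$, which together with the heredity of $\mathcal{H}$ makes $\mathcal{J}$ a closed $\Tt$-invariant order ideal; irreducibility then forces $\mathcal{J}=\X$, whence $\mathcal{H}=\mathcal{J}\cap\X_+=\X_+$.

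The only genuinely non-mechanical point, in either route, is the identification $\mathcal{J}\cap\X_+=\mathcal{H}$ — equivalently, the fact that the order ideal generated by $\mathcal{H}$ meets $\X_+$ in nothing larger than $\mathcal{H}$ itself — respectively the standard statement that an irreducible positive semigroup has a strictly positive resolvent; both rest squarely on Proposition~\ref{closed} and on the positivity of the functionals $\bl$. Everything else — injectivity of $(\l-\G)^{-1}$, stability of invariance under linear spans and closures, and the identity $\overline{\mathrm{span}(\mathcal{I}_u)}=\X$ for quasi-interior $u$ — is routine.
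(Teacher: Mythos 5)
Your proof is correct and takes essentially the same route as the paper: part (1) follows from the last assertion of Proposition \ref{closed} together with the fact that $\overline{\mathrm{span}(\mathcal{I}_u)}$ contains $\X_+$ for a quasi-interior $u$, and part (2) reduces to part (1) by noting that $\mathcal{H}$ is invariant under $(\l-\G)^{-1}$ and that irreducibility makes $v=(\l-\G)^{-1}u$ a quasi-interior element of $\mathcal{H}$. Your supplementary variants (killing $\bl$ directly via positivity — which is in fact even shorter, since $\la \bl,u\ra=0$ with $u$ quasi-interior and $\bl\in\X^\star_+$ forces $\bl=0$ outright — and the closed invariant order ideal $\overline{\mathrm{span}(\mathcal{H})}$) are sound but not needed for the paper's argument.
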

\begin{proof} \textit{(1)} If $\X$ contains a quasi-interior element $u$, then
\cite{schaef, depagter} $\overline{\mathrm{span}(\mathcal{I}_u)}=\X_+
$. One sees then that, if $u \in \mathcal{H}$, Proposition \ref{closed} implies $\mathcal{H}=\X_+$.

\textit{(2)} According to Proposition \ref{closed}, $\mathcal{H}$ is
invariant by $(\l-\G)^{-1}$ for any $\l >0.$ Therefore,
$v=(\l-\G)^{-1}u$ is a quasi-interior element of $\mathcal{H}$ and
we conclude by the first point.\end{proof}
Before giving some more precise properties of $\mathcal{H}$ let us introduce the notions of ideal and hereditary subcone:
\begin{defi} A subcone $\mathcal{C}$ of $\X_+$ is said to be \textit{\textbf{hereditary}} if $0 \leq u \leq v$ and $v \in \mathcal{C}$ imply $u \in \mathcal{C}.$ An \textbf{order ideal} of $\X$ is a linear subspace $\mathscr{A}$ of $\X$ such that $u_1 \leq v \leq u_2$ and $u_i \in \mathscr{A}$, $i=1,2$ imply $v \in \mathscr{A}.$ An order ideal $\mathscr{A}$ of $\X$ is said to be\textbf{ positively generated} if $\mathscr{A}=\left(\mathscr{A} \cap \X_+\right)-\left(\mathscr{A} \cap \X_+\right).$
\end{defi}
\begin{nb}\label{remarkhere} Notice that, if $\mathscr{A}$ is a positively generated order ideal of $\X$ then
$$u \in \mathscr{A} \quad \implies \quad |u| \in \mathscr{A}.$$
Indeed, since $\mathscr{A}$ is positively generated one has $u=u_1-u_2$ with $u_i \in \mathscr{A} \cap \X_+$. Moreover, according to \cite[Lemma 2]{robinson}, $\mathscr{A} \cap \X_+$ is an hereditary subcone of $\X_+$. In particular, since $0 \leq |u| \leq u_1+u_2$ one gets $|u| \in \mathscr{A} \cap \X_+.$
\end{nb}
The subset $$\mathscr{H}:=\mathcal{H}-\mathcal{H}$$ enjoys the following properties:
\begin{theo}\label{structure} Let $\mathcal{H}$ be defined by \eqref{H1}. Then,
$\mathcal{H}$ is a closed hereditary subcone of $\X_+$ and
$\mathscr{H}$ is an order ideal with induced
positive cone $\mathscr{H}_+$ equal to $\mathcal{H}$. Moreover,
$\mathscr{H}$ is invariant under $\Tt$.
\end{theo}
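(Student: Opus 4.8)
The plan is to exploit the fact that $\mathcal{H}$ is cut out inside $\X_+$ by a single continuous \emph{positive} linear functional $\bl$ (for an arbitrary fixed $\l>0$), together with the invariance properties already recorded in Proposition \ref{closed}. Recall from Lemma \ref{lemm1.4.mkvo} that $\la\bl,u\ra=\lim_{n\to\infty}\la\mathbf{\Psi},[\B(\l-\A)^{-1}]^nu\ra$; since $\B(\l-\A)^{-1}$ is positive and $\mathbf{\Psi}\in\X^\star_+$, each term is nonnegative on $\X_+$, so $\bl$ is positive, and $\mathcal{H}=\{u\in\X_+\,:\,\la\bl,u\ra=0\}$ (the choice of $\l$ being irrelevant by Definition \ref{defi:bl} and the discussion preceding \eqref{H1}). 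First I would check that $\mathcal{H}$ is a closed subcone: it is the intersection of the closed cone $\X_+$ with the closed hyperplane $\ker\bl$, and it is stable under addition and multiplication by nonnegative scalars by linearity of $\bl$. For the \emph{hereditary} property, take $0\leq u\leq v$ with $v\in\mathcal{H}$; positivity of $\bl$ gives $0\leq\la\bl,u\ra\leq\la\bl,v\ra=0$, hence $\la\bl,u\ra=0$ and $u\in\mathcal{H}$. This hereditary property is the key point on which everything else rests.

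Next I would turn to $\mathscr{H}=\mathcal{H}-\mathcal{H}$. Since $\mathcal{H}$ is a cone with $\mathcal{H}+\mathcal{H}\subset\mathcal{H}$ and $\mathbb{R}_+\mathcal{H}\subset\mathcal{H}$, the set $\mathscr{H}$ is automatically a linear subspace of $\X$. The crucial intermediate step is to identify its induced positive cone: if $w\in\mathscr{H}\cap\X_+$, write $w=u-v$ with $u,v\in\mathcal{H}$; then $0\leq w\leq u$ (because $v\geq0$), so the hereditary property yields $w\in\mathcal{H}$. Together with the trivial inclusion $\mathcal{H}\subset\mathscr{H}\cap\X_+$ this gives $\mathscr{H}_+=\mathscr{H}\cap\X_+=\mathcal{H}$. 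With this in hand the order-ideal property follows by bookkeeping: given $u_1\leq v\leq u_2$ with $u_1,u_2\in\mathscr{H}$, one has $u_2-u_1\in\mathscr{H}\cap\X_+=\mathcal{H}$ and $0\leq v-u_1\leq u_2-u_1$, so $v-u_1\in\mathcal{H}$ by the hereditary property; writing $u_1=a-b$ with $a,b\in\mathcal{H}$ gives $v=\big((v-u_1)+a\big)-b\in\mathcal{H}-\mathcal{H}=\mathscr{H}$.

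Finally, invariance of $\mathscr{H}$ under $\Tt$ is immediate from the invariance of $\mathcal{H}$ under $\Tt$ established in Proposition \ref{closed}: for $w=u-v\in\mathscr{H}$ one has $\T(t)w=\T(t)u-\T(t)v\in\mathcal{H}-\mathcal{H}=\mathscr{H}$ for every $t\geq0$. I do not expect any genuine obstacle here; the only points that require care are the positivity of $\bl$ (which rests on Lemma \ref{lemm1.4.mkvo}), the use of closedness of the cone $\X_+$ for the closedness of $\mathcal{H}$, and the elementary but slightly fiddly passage from the hereditary property of $\mathcal{H}$ to the order-ideal property of $\mathscr{H}$.
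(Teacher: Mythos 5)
Your proof is correct, and for the first and last parts (closedness, the hereditary property via positivity of $\bl$, and invariance under $\Tt$ via Proposition \ref{closed}) it follows exactly the paper's own argument. The one place where you genuinely diverge is the passage from the hereditary subcone $\mathcal{H}$ to the order-ideal structure of $\mathscr{H}=\mathcal{H}-\mathcal{H}$: the paper disposes of this step by invoking \cite[Lemma 2]{robinson} (Robinson--Yamamuro), which says precisely that the span of a hereditary subcone is an order ideal whose positive part is that subcone, whereas you prove it by hand. Your argument --- first showing $\mathscr{H}\cap\X_+=\mathcal{H}$ by writing a positive $w=u-v$ with $u,v\in\mathcal{H}$, noting $0\leq w\leq u$ and applying heredity, then sandwiching $v-u_1$ between $0$ and $u_2-u_1\in\mathcal{H}$ to get the ideal property --- is complete and elementary, and it makes the proof self-contained at the cost of a few lines; the paper's citation is shorter and places the statement in its general ordered-Banach-space context (the same lemma is reused elsewhere in the paper, e.g. in Remark \ref{remarkhere} and Proposition \ref{mesure}). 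Your justification of the positivity of $\bl$ through Lemma \ref{lemm1.4.mkvo} is also fine, and slightly more explicit than the paper, which simply asserts that $\bl$ is a positive continuous functional.
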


\begin{proof}
We first note that, since $\bl$ is a positive and continuous linear
form over $\X$,
$$\mathcal{H}=\bigg\{u \in \X\,;\,\la \bl, u \ra =0  \text{
for any } \l >0\bigg\} \cap \X_+$$ is clearly a closed convex
subcone of $\X_+$. Moreover, if $0 \leq u \leq v$ with $v \in
\mathcal{H}$ then, for any $\l >0$, $\la \bl, v \ra =0$ and
consequently $\la \bl, u \ra =0$ since $\bl$ is positive, i.e.
$\mathcal{H}$ is a closed hereditary subcone of $\X_+$. It is easy
to see that $\mathscr{H}:=\mathcal{H}-\mathcal{H}$ is the linear
space generated by $\mathcal{H}$. Then, by \cite[Lemma 2]{robinson},
$\mathscr{H}$ is an order ideal with positive cone $\mathcal{H}$.
The fact that $\mathscr{H}$ is invariant under the semigroup $\Tt$
follows from the previous Proposition.\end{proof}

A priori, in the general setting above, it is \textit{not clear}
that $\mathscr{H}$ is closed in $\X$. However, we have more precise
results in $AL$-spaces (i.e. Banach lattices with additive norm) and
in preduals of von Neumann algebras.
\begin{propo}\label{propo:von} (i) If $\X$ is a  $ {AL} $-space then $\mathscr{H}$ is a closed
lattice ideal (and therefore a projection band) of $\X$. In particular, there exists  a band projection $\mathbf{P}$
onto $\mathscr{H}$   such that $\mathscr{H}=\mathbf{P}\X$ and  $\X=\mathscr{H}
\oplus \mathscr{H}_d$ where the
disjoint complement $\mathscr{H}_d$  of $\mathscr{H}$  is given by
$\mathscr{H}_d=(I-\mathbf{P})\X$.

(ii) Let $\X$ be the predual of a von Neumann algebra. Then,
$\mathscr{H}$ is a closed order ideal.\end{propo}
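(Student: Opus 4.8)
The plan is to exploit the fact, already established in Theorem \ref{structure}, that $\mathscr{H}=\mathcal{H}-\mathcal{H}$ is a positively generated order ideal of $\X$ with positive cone $\mathcal{H}$, and to upgrade ``order ideal'' to ``closed order/lattice ideal'' using the extra structure of the two classes of spaces. In both cases the key external input is the description of $\mathcal{H}$ as $\{u\in\X_+:\la\bl,u\ra=0\text{ for all }\l>0\}$, where each $\bl$ is a \emph{positive continuous} linear form. So $\mathscr{H}=\bigcap_{\l>0}\ker\bl\,\cap\,(\text{positively generated part})$, and the whole game is to show that the intersection of the kernels of these positive functionals, intersected with $\X_+$ and then symmetrized, is already norm-closed and lattice-closed.

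For part (i), $\X$ is an $AL$-space, hence a Banach lattice, and I would argue as follows. First, $\mathcal{H}$ is a closed hereditary subcone by Theorem \ref{structure}. To see $\mathscr{H}=\mathcal{H}-\mathcal{H}$ is a lattice ideal it suffices, by Remark \ref{remarkhere}, to check that $u\in\mathscr{H}\Rightarrow |u|\in\mathscr{H}$ together with solidity, and solidity is exactly the hereditary property of $\mathcal{H}$ transported to $\mathscr{H}$; so the content is closedness. Here I would use that in an $AL$-space the lattice operations are norm-continuous and that a positive functional $\bl$ satisfies $\la\bl,|u|\ra\geq|\la\bl,u\ra|$, so $\ker\bl\cap\X_+$ behaves well under suprema and infima; combined with the additivity of the norm one gets that $\mathcal{H}=\bigcap_\l\ker\bl\cap\X_+$ is a norm-closed solid subcone whose generated subspace is a norm-closed lattice ideal. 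A norm-closed lattice ideal in an order-complete Banach lattice (and $AL$-spaces are order complete) is a projection band; this yields the band projection $\mathbf{P}$, the decomposition $\X=\mathscr{H}\oplus\mathscr{H}_d$ with $\mathscr{H}_d=(I-\mathbf P)\X$, and I would simply cite the standard Banach lattice references already in the bibliography (\cite{schaef}, \cite{batty}, \cite{depagter}) for these facts rather than reprove them.

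For part (ii), $\X=\mathcal{M}_*$ is the predual of a von Neumann algebra $\mathcal{M}$, self-adjoint part carrying the natural ordering. The plan is: (a) each $\bl$ is a positive normal functional on the predual, i.e. corresponds (via the order-isomorphism between $\mathcal{M}_*$-functionals and $\mathcal{M}$) to a positive element $b_\l\in\mathcal{M}_+$ of the algebra, so $\la\bl,u\ra=\tau(b_\l u)$ in the appropriate pairing; (b) for a positive normal functional, $\la\bl,u\ra=0$ with $u\geq 0$ forces the support projection of $u$ to be dominated by the null projection of $b_\l$, which is the key noncommutative replacement of ``support is disjoint''; (c) therefore $\mathcal{H}=\{u\in\X_+: s(u)\leq p_0\}$ where $p_0=\bigwedge_\l (\text{null projection of }b_\l)$ is a projection in $\mathcal{M}$, and $\mathscr{H}=p_0\X p_0$ (compressions by $p_0$), which is norm-closed because it is the range of the norm-contractive map $u\mapsto p_0 u p_0$ and because the support condition is preserved under norm limits of positive elements. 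Closedness as an order ideal then follows from solidity (already known) plus norm-closedness. I would quote \cite{Da1} or the noncommutative-$L^1$ references (\cite{class}) for the support-projection calculus.

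The main obstacle I anticipate is part (ii), specifically step (b)--(c): making rigorous that the family of functionals $\{\bl\}_{\l>0}$ has a \emph{common} support-type projection $p_0\in\mathcal{M}$ and that $\mathscr{H}$ coincides exactly with the compressed predual $p_0\X p_0$, since a priori $\mathscr{H}$ is only the linear span of $\mathcal{H}$ and one must rule out that symmetrization enlarges it. The cleanest route is probably to show directly that $\mathcal{H}-\mathcal{H}\subseteq\{u\in\X:\la\bl,|u|\ra=0\ \forall\l\}$ using Remark \ref{remarkhere}, note the reverse inclusion is trivial, and then identify this set with $p_0\X p_0$; norm-closedness is then immediate. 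Part (i) should be essentially bookkeeping once the lattice-ideal identity $u\in\mathscr{H}\Rightarrow|u|\in\mathscr{H}$ is in hand from Remark \ref{remarkhere}.
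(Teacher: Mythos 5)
Your proposal is correct in substance, and part (i) is in essence the paper's own argument: the paper takes $u_n\to u$ with $u_n=v_n-w_n$, $v_n,w_n\in\mathcal{H}$, notes $|u_n|\le v_n+w_n$ so that $\la\bl,|u_n|\ra=0$ and hence $u_n^{\pm}\in\mathcal{H}$, and concludes from the norm-continuity of the lattice operations together with the closedness of $\mathcal{H}$ that $u^{\pm}\in\mathcal{H}$, i.e. $u\in\mathscr{H}$. These are exactly the ingredients you list, but you should assemble them into this explicit sequence argument rather than asserting that the subspace generated by a closed solid subcone is ``automatically'' closed (that is precisely the point at issue, and the additivity of the norm plays no role in it). Where you genuinely diverge is part (ii). The paper settles it in two lines by reducing to (i): on the self-adjoint part of a von Neumann predual the map $u\mapsto|u|$ is norm-continuous (Takesaki, Proposition 4.10, p.~415), so the same sequence argument applies verbatim. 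You instead represent each $\bl$ by an element $b_\l\in\mathcal{M}_+$ of the algebra, pass to support and null projections, and identify $\mathscr{H}$ with the compressed predual $p_0\X p_0$ with $p_0=\bigwedge_\l\left(1-s(b_\l)\right)$, whose closedness is immediate as the fixed-point set of the bounded projection $u\mapsto u(p_0\cdot p_0)$. This route is viable: the steps you flag as delicate (that $u\ge 0$ and $\la\bl,u\ra=0$ force $s(u)\le 1-s(b_\l)$ by normality; that $\mathcal{H}-\mathcal{H}\subseteq\{u\,:\,\la\bl,|u|\ra=0\ \forall\l>0\}$, which needs the minimality of the Jordan decomposition $u^{\pm}\le v,w$; and that this set equals $p_0\X p_0$) are standard facts of the normal-functional calculus, though each should be cited or proved. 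It is, however, considerably heavier than the paper's proof. What your approach buys is a concrete structural description of $\mathscr{H}$ in the general von Neumann setting, parallel to what the paper establishes only later and only for $\X=\mathscr{T}_s(\mathfrak{h})$, namely $\mathscr{H}=\left\{\varrho\,:\,\varrho=\mathbf{P}\varrho=\varrho\mathbf{P}\right\}$; what the paper's argument buys is brevity and economy of means, since it needs no support-projection machinery and no common null projection $p_0$ (whose existence invokes the completeness of the projection lattice), but only the continuity of the modulus map, which your sketch does not use.
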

\begin{proof}
(i) Let $(u_n)_n \subset \mathscr{H}$ be such that $u_n \to u$ in
$\X$. By assumption, $u_n=v_n-w_n$ with $v_n,\,w_n \in \mathcal{H}$.
In particular, $|u_n| \leq v_n + w_n $ and  $\la \bl,|u_n| \ra \leq
\la \bl, v_n\ra + \la \bl, w_n \ra=0$  whence $|u_n| \in
\mathcal{H}$. It follows that the negative and positive parts
$u_n^-$ and $u_n^+$ both belong to $\mathcal{H}$. Since $\X$ is a
vector lattice, the mappings $v \in \X \mapsto v^{\pm} \in \X_+$ are
continuous \cite[Proposition 5.2]{schaef}, one has $u_n^{\pm}  \to u^{\pm}$
and $u^+,u^-$ belong to $\mathcal{H}$. This proves that $u=u^+-u^-
\in \mathscr{H}.$

(ii) If $\mathfrak{A}$ is a von Neumann algebra and
$\X=\mathfrak{A}_\star$ is its predual, then the mapping $u \in \X
\mapsto |u| \in \X_+$ is continuous (see e.g. \cite[Proposition
4.10, p. 415]{takesaki} ) and then, arguing as in (i), one gets the
conclusion.
\end{proof}
\begin{nb}\label{hdd} In the above case (i), the positive cone of the disjoint complement $\mathscr{H}_d$
does not contain non-trivial elements with a honest trajectory. In particular, dishonest trajectories are all emanating from elements of the positive cone of  $\X=\mathscr{H} \oplus \mathscr{H}_d$ having a non-trivial component over $\mathscr{H}_d.$
\end{nb}
We now deal with two practical examples for concrete spaces:
\subsubsection*{Example 1: The space of bounded signed measures} Let $\left(\Sigma, \mathcal{F}\right)$ be a measure space and $\X=\mathcal{M}(\Sigma,\mathcal{F})$ denote the Banach space of all bounded signed measures over $(\Sigma,\mathcal{F})$ endowed with the total variation norm:
$$\|\mu\|=|\mu|(\Sigma), \qquad \forall \mu \in \mathcal{M}.$$
We recall here that $\X=\mathcal{M}(\Sigma,\mathcal{F})$ is a $AL$-space \cite[Example 3, p. 114]{schaef} and every $\mu \in \X$ splits as $\mu=\mu_+-\mu\-$ where $\mu_\pm \in \X_+$ and $|\mu|=\mu_++\mu_-.$  Given two measures $\mu$ and $\nu$ of $\X$, we shall denote $\nu \prec \mu$ if $\nu$ is absolutely continuous with respect to $|\mu|.$ Using the terminology of \cite{aks}, we shall say that a closed subspace $\mathscr{A}$ of $\X=\mathcal{M}(\Sigma,\mathcal{F}) $ is a $M$-ideal if, for any $\mu \in \mathscr{A}$ and any $\nu \in \X$, $\nu \prec \mu$ implies $\nu \in \mathscr{A}.$ Then, one has the following
\begin{propo}\label{mesure}
A subspace $\mathscr{A}$ of $\mathcal{M}(\Sigma,\mathcal{F})$ is a $M$-ideal of $\mathcal{M}$ if and only if $\mathscr{A}$ is a closed and positively generated order ideal of $\mathcal{M}(\Sigma,\mathcal{F})$.
\end{propo}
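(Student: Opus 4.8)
The plan is to prove the two implications separately, the substantive direction being that every closed positively generated order ideal is an $M$-ideal (in the sense of absolute continuity given in the text). First I would unpack the definitions: a subspace $\mathscr{A}$ is an $M$-ideal here precisely when $\mu \in \mathscr{A}$ and $\nu \prec \mu$ force $\nu \in \mathscr{A}$, i.e. $\mathscr{A}$ is a band-like object stable under passing to absolutely continuous measures. For the easy implication, suppose $\mathscr{A}$ is an $M$-ideal. Stability under $\prec$ immediately gives closedness under $\mu \mapsto |\mu|$, $\mu\mapsto\mu_+$, $\mu\mapsto\mu_-$ (each of $\mu_\pm$ and $|\mu|$ is absolutely continuous with respect to $|\mu|$), so $\mathscr{A}$ is positively generated; and if $\mu_1 \leq \nu \leq \mu_2$ with $\mu_i \in \mathscr{A}$, then writing $\nu = \nu_+ - \nu_-$ one checks $\nu_+ \prec \mu_2 \in \mathscr{A}$ (since $0\le \nu \le \mu_2$ forces $\nu_+\le\mu_2$ on the relevant sets, hence absolute continuity) and $\nu_- \prec \mu_1$, giving $\nu \in \mathscr{A}$; so $\mathscr{A}$ is an order ideal. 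Closedness of an $M$-ideal I would either take as part of the definition in \cite{aks} or verify directly: if $\mu_n \to \mu$ in total variation with $\mu_n \in \mathscr{A}$, pass to a subsequence with $\sum \|\mu_{n_{k+1}}-\mu_{n_k}\| < \infty$, so that $\mu \prec \sigma$ where $\sigma = \sum_k |\mu_{n_{k+1}}-\mu_{n_k}| + |\mu_{n_1}| \in \X_+$; but one must check $\sigma \in \mathscr{A}$, which needs the order-ideal/positive-generation structure, so this closedness argument really belongs to the converse direction.

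For the main implication, assume $\mathscr{A}$ is closed, positively generated, and an order ideal; I must show $\nu \prec \mu$, $\mu \in \mathscr{A}$ $\Rightarrow$ $\nu \in \mathscr{A}$. By Remark \ref{remarkhere} (or directly from positive generation plus \cite[Lemma 2]{robinson}), $\mathscr{A} \cap \X_+$ is a hereditary subcone and $|\mu| \in \mathscr{A}$ whenever $\mu \in \mathscr{A}$; so without loss of generality $\mu \geq 0$ and it suffices to treat $\nu \geq 0$ with $\nu \prec \mu$. Write $f = \d\nu/\d\mu \in L^1(\mu)$, $f \geq 0$. For the bounded case $0 \leq f \leq p$ a.e., we have $\nu \leq p\,\mu$, so $0 \leq \nu \leq p\mu$ with $p\mu \in \mathscr{A}$, and heredity gives $\nu \in \mathscr{A}$. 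For general $f$, approximate: set $\nu_n = f\mathbf{1}_{\{f \leq n\}}\cdot\mu$; then $\nu_n \in \mathscr{A}$ by the bounded case, $0\le\nu_n\le\nu_{n+1}$, and $\|\nu - \nu_n\| = \int_{\{f>n\}} f\,\d\mu \to 0$ by dominated convergence. Closedness of $\mathscr{A}$ then yields $\nu \in \mathscr{A}$, completing the proof. (This same truncation-plus-closedness scheme also settles the closedness step flagged above in the easy direction, if needed.)

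The only genuine obstacle I anticipate is the precise form of the definition of $M$-ideal being used: the text says "Using the terminology of \cite{aks}" but then gives its own working definition in terms of $\prec$. I would make sure to argue entirely from the stated working definition ($\nu \prec \mu$, $\mu\in\mathscr{A}$ $\Rightarrow$ $\nu\in\mathscr{A}$, together with closedness), so that no external characterization of $M$-ideals is invoked. With that in hand the proof is essentially a Radon–Nikodym computation plus the heredity property of $\mathscr{A}\cap\X_+$ from \cite{robinson}, and none of the steps is deep — the content is bookkeeping between the order-theoretic notion of ideal and the measure-theoretic notion of absolute continuity.
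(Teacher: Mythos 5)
Your proposal is correct and follows essentially the same route as the paper: the substantive direction is handled by a Radon--Nikodym density, truncation ($f\mathbf{1}_{\{f\le n\}}$ versus the paper's $|h|\wedge n$), the heredity/order-ideal property to place the truncated measures in $\mathscr{A}$, and dominated convergence plus closedness to conclude, while the easy direction uses that $\prec$-stability gives $\mu_\pm,|\mu|\in\mathscr{A}$ and that $0\le\mu\le\nu$ implies $\mu\prec\nu$. The only cosmetic deviations are that you verify the order-ideal property directly (the paper invokes \cite[Lemma 2]{robinson}) and reduce to $\nu\ge 0$ at the start rather than passing through $|\nu|$ and $-|\nu|\le\nu\le|\nu|$ at the end; you also correctly note that closedness is built into the paper's definition of $M$-ideal.
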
\begin{proof} Let us first assume that $\mathscr{A}$ is a closed and positively generated order ideal of $\X$ and let $\mu \in
\mathscr{A}$ and $\nu \in \X$ such that $\nu \prec \mu .$ From  Radon-Nikodym Theorem, there is some $h\in L^{1}(\Sigma ,\mathcal{F})%
,\d\left| \mu \right| )$ such that $ \nu =h \left| \mu \right| $. Thus, $ \left|
\nu \right| =\left| h\right| \left| \mu \right|$ and
\begin{equation*}
\lim_{n\rightarrow \infty }\left\| |\nu  | -\beta _{n}\right\|=0
\end{equation*}
where $\beta _{n}:=\left(| h | \wedge n\right)\left| \mu \right|$. Indeed $ \beta _{n}\leq \left| \nu \right|$ for any $n \in \mathbb{N}$ and
\begin{equation*}
\left\| \left| \nu \right| -\beta _{n}\right\| = \left| \nu \right|(\Sigma) -\beta
_{n}(\Sigma )=\int_\Sigma \left[ \left| h\right| - (\left| h\right|  \wedge n)\right]
\d\left| \mu \right|
\end{equation*}
goes to zero as $n \to \infty$ according to the dominated convergence theorem. Now, $\beta _{n}\leq n\left| \mu \right| $ with $|\mu| \in \mathscr{A}$ (see Remark \ref{remarkhere}) and, from the ideal property, $\beta _{n}\in \mathcal{A}$. From the closedness of $\mathscr{A}$, one gets that  $\left| \nu \right|
\in \mathcal{A}$. Since $-\left| \nu \right| \leq \nu \leq \left| \nu \right|$, one finally obtains $\nu \in
\mathcal{A}$ and $\mathscr{A}$ is a $M$-ideal. Conversely, let  $\mathscr{A}$ be a $M$-ideal. By definition, if $\mu \in \mathscr{A}$ then $\left|
\mu \right| \in \mathscr{A}$ and $\mu _{\pm}\in \mathscr{A}%
. $ In particular, $\mathscr{A}=(\mathscr{A} \cap \X_{+})-(\mathscr{A} \cap
\X_{+})$. Moreover, since $0\leq \mu \leq \nu \implies \mu \prec \nu $, one sees that  $\mathscr{A}\cap \X_{+}$ is an hereditary subcone of $\X_+$ and $\mathscr{A}$ is an order ideal of $\X$ according to \cite[Lemma 2]{robinson}.
\end{proof} One deduces from this the following which allows to give a complete description of the state $\mu$ leading to a dishonest trajectory (see Remark \ref{hdd}):
\begin{propo} Under the assumptions of Theorem \ref{structure} with $\X=\mathcal{M}(\Sigma,\mathcal{F})$, one has $\mathscr{H}$ is a $M$-ideal of $\X$ and $\X=\mathscr{H} \oplus \mathscr{H}_d$ where
\begin{equation}\label{hdperp}\mathscr{H}_d=\{\mu \in \X=\mathcal{M}(\Sigma,\mathcal{F})\;\, \text{ such that }  \nu \prec \mu  \text{ and } \nu \in \mathscr{H} \implies \nu =0 \;\}.\end{equation}
\end{propo}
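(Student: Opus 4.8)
The plan is to piece the statement together out of the structural results that are already available, so that almost nothing new has to be proved. First I would note that $\X=\mathcal{M}(\Sigma,\mathcal{F})$ is an $AL$-space, so Proposition \ref{propo:von}(i) applies verbatim: $\mathscr{H}$ is a closed lattice ideal, hence a projection band, and $\X=\mathscr{H}\oplus\mathscr{H}_d$ with $\mathscr{H}_d$ the disjoint complement. In particular $\mathscr{H}$ is a closed \emph{order} ideal, and it is positively generated because $\mathscr{H}=\mathcal{H}-\mathcal{H}$ with $\mathcal{H}=\mathscr{H}_+$ by Theorem \ref{structure}. Proposition \ref{mesure} then gives at once that $\mathscr{H}$ is an $M$-ideal of $\mathcal{M}(\Sigma,\mathcal{F})$, and the direct-sum decomposition is inherited from the band structure. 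So the only genuine work is to identify $\mathscr{H}_d$ with the right-hand side $\mathscr{K}$ of \eqref{hdperp}.

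For this I would use the classical fact that in $\mathcal{M}(\Sigma,\mathcal{F})$ lattice disjointness coincides with mutual singularity: $|\mu|\wedge|\nu|=0$ iff there is $A\in\mathcal{F}$ with $|\mu|(A^c)=0$ and $|\nu|(A)=0$. To show $\mathscr{H}_d\subset\mathscr{K}$, take $\mu\in\mathscr{H}_d$ and $\nu\in\mathscr{H}$ with $\nu\prec\mu$; disjointness of $\mu$ and $\nu$ gives a set $A$ as above, while $\nu\prec\mu$ means $|\nu|\ll|\mu|$, so $|\mu|(A^c)=0$ forces $|\nu|(A^c)=0$; combined with $|\nu|(A)=0$ this yields $|\nu|(\Sigma)=0$, i.e. $\nu=0$, so $\mu\in\mathscr{K}$. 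For the reverse inclusion $\mathscr{K}\subset\mathscr{H}_d$, take $\mu\in\mathscr{K}$ and any $\nu\in\mathscr{H}$, and set $\lambda:=|\mu|\wedge|\nu|\in\X_+$. From $0\leq\lambda\leq|\mu|$ one gets $\lambda\prec\mu$; from $0\leq\lambda\leq|\nu|$ together with $|\nu|\in\mathscr{H}$ (Remark \ref{remarkhere}) and the hereditariness of $\mathcal{H}=\mathscr{H}_+$ (Theorem \ref{structure}) one gets $\lambda\in\mathscr{H}$. The defining property of $\mathscr{K}$ then forces $\lambda=0$, that is $|\mu|\wedge|\nu|=0$ for all $\nu\in\mathscr{H}$, which is exactly $\mu\in\mathscr{H}_d$. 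Hence $\mathscr{H}_d=\mathscr{K}$.

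I do not expect a real obstacle here: the $M$-ideal assertion is a formal consequence of Propositions \ref{propo:von}(i) and \ref{mesure}, and the identification of $\mathscr{H}_d$ is just careful null-set bookkeeping built on the "disjointness $=$ mutual singularity" dictionary and on the ideal/hereditariness properties of $\mathscr{H}$ already recorded. The only point to state with some care is precisely that equivalence and the Radon--Nikodym argument underlying $\nu\prec\mu\Rightarrow|\nu|\ll|\mu|$, both of which are standard in $\mathcal{M}(\Sigma,\mathcal{F})$.
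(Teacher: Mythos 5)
Your proposal is correct. The first half follows the paper exactly: like the authors, you invoke Proposition \ref{propo:von}(i) (via Theorem \ref{structure}) to get that $\mathscr{H}$ is a projection band with $\X=\mathscr{H}\oplus\mathscr{H}_d$, and Proposition \ref{mesure} to get the $M$-ideal property. Where you diverge is the identification $\mathscr{H}_d=\mathscr{K}$ with $\mathscr{K}$ the set in \eqref{hdperp}: the paper disposes of this in one line by citing the decomposition result of \cite{aks} (which says that for an $M$-ideal the band complement coincides with the set \eqref{hdperp}), whereas you prove it directly. Your argument is sound: for $\mathscr{H}_d\subset\mathscr{K}$ you use that band-disjointness in $\mathcal{M}(\Sigma,\mathcal{F})$ is mutual singularity, so $\nu\prec\mu$ together with $|\mu|\wedge|\nu|=0$ forces $\nu=0$; for $\mathscr{K}\subset\mathscr{H}_d$ you test with $\lambda=|\mu|\wedge|\nu|$, noting $\lambda\prec\mu$ and $\lambda\in\mathcal{H}$ by Remark \ref{remarkhere} and the hereditariness of $\mathcal{H}$ from Theorem \ref{structure}, so $\lambda=0$. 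What this buys is a self-contained proof that makes explicit the measure-theoretic content hidden in the citation, and it incidentally shows that the identification of $\mathscr{H}_d$ uses only the band structure and the hereditariness of $\mathcal{H}$ (together with the dictionary ``lattice disjointness $=$ mutual singularity''), not the $M$-ideal property itself, which in your write-up serves only to verify the first assertion of the statement; the paper's route is shorter but leans on the external result of \cite{aks}. The only facts you should state (or reference) explicitly if this were written out in full are the equivalence $|\mu|\wedge|\nu|=0\iff\mu\perp\nu$ in $\mathcal{M}(\Sigma,\mathcal{F})$ and that the range of $I-\mathbf{P}$ is the disjoint complement $\{\mu\,;\,|\mu|\wedge|\nu|=0\ \forall\nu\in\mathscr{H}\}$, both standard and consistent with the paper's use of \cite{schaef}.
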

\begin{proof} We saw in Theorem \ref{structure} that $\mathscr{H}$ is a closed
lattice ideal of $\X$. In particular, one can define a band projection $\mathbf{P}$
onto $\mathscr{H}$   such that $\mathscr{H}=\mathbf{P}\X$ and the
disjoint complement $\mathscr{H}_d$  of $\mathscr{H}$  given by
$\mathscr{H}_d=(I-\mathbf{P})\X$ are such that $\X=\mathscr{H}
\oplus \mathscr{H}_d$ \cite{schaef}. Since, according to Prop. \ref{mesure}, $\mathscr{H}$ is a $M$-ideal of $\X$, one deduces from \cite{aks} that
$\mathscr{H}_d=\mathscr{H}^{\perp}$ where $\mathscr{H}^{\perp}$ is given by \eqref{hdperp}.
\end{proof}
\subsubsection*{Example 2: The space of trace class operators} We assume here that $\X=\mathscr{T}_s(\mathfrak{h})$ is the Banach space of all linear self-adjoint trace class operators on some separable Hilbert space $\mathfrak{h}$ endowed with the trace norm $\|\varrho\|=\mathrm{Trace}[\,|\varrho|\,]$ for any $\varrho\in \X$ (see \cite{class} for details). The scalar product of $\mathfrak{h}$ shall be denoted by $(\cdot,\cdot)$. Under the assumptions of the present section, one deduces from \cite[Theorem 5]{class} that, for any $\l >0$, there exists $\beta_\l \in \mathscr{L}^+_s(\mathfrak{h})$ such that
$$\la \Xi_\l,u\ra=\mathrm{Trace}[\beta_\l \varrho] \qquad \forall \varrho \in \X_+$$
where $\mathscr{L}^+_s(\mathfrak{h})$ is the space of all positive bounded self-adjoint operators on $\mathfrak{h}$. One has the following
\begin{theo} The null space of $\beta_\l$ is independent of $\l$ and $$\mathcal{H}=\left\{\varrho \in \mathfrak{X}_+\,;\, \varrho=\mathbf{P}  \varrho=\varrho\mathbf{P}\,\right\}=\left\{\varrho \in \mathfrak{X}_+\,;\, \mathbf{Q}  \varrho=\varrho\mathbf{Q}=0\,\right\}$$ where $\mathbf{P}$ is the  projection of $\mathfrak{h}$ onto $\mathrm{Null}(\beta_\lambda)$ while $\mathbf{Q}=\mathbf{Id}_{\mathfrak{h}}-\mathbf{P}.$
\end{theo}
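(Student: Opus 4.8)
The plan is to reduce membership in $\mathcal{H}$ to an operator-theoretic condition on the range of $\varrho$, using the representation $\la\Xi_\l,\varrho\ra=\mathrm{Trace}[\beta_\l\varrho]$ supplied by \cite[Theorem 5]{class}, and then to invoke the $\l$-independence of honesty (recorded just before \eqref{H1}) to show that all the subspaces $\mathrm{Null}(\beta_\l)$ coincide.

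First I would settle the range reformulation. Fix $\l>0$ and $\varrho\in\X_+$. Using cyclicity of the trace (legitimate since $\varrho^{1/2}$ and $\beta_\l^{1/2}\varrho^{1/2}$ are Hilbert--Schmidt) one gets
$$
\la\Xi_\l,\varrho\ra=\mathrm{Trace}[\beta_\l\varrho]=\mathrm{Trace}\Big[\big(\beta_\l^{1/2}\varrho^{1/2}\big)^\star\big(\beta_\l^{1/2}\varrho^{1/2}\big)\Big]\geq 0,
$$
so that $\la\Xi_\l,\varrho\ra=0$ exactly when $\beta_\l^{1/2}\varrho^{1/2}=0$, i.e.\ (using $\overline{\mathrm{Ran}\,\varrho^{1/2}}=\overline{\mathrm{Ran}\,\varrho}$, the closedness of $\mathrm{Null}(\beta_\l)$, and $\mathrm{Null}(\beta_\l^{1/2})=\mathrm{Null}(\beta_\l)$) exactly when $\mathrm{Ran}\,\varrho\subset\mathrm{Null}(\beta_\l)$. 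Writing $\mathbf{P}_\l$ for the orthogonal projection onto the closed subspace $\mathrm{Null}(\beta_\l)$ and $\mathbf{Q}_\l=\mathbf{Id}_{\mathfrak{h}}-\mathbf{P}_\l$, this is in turn equivalent to $\mathbf{P}_\l\varrho=\varrho$ and --- since $\varrho=\varrho^\star$ and $\mathbf{P}_\l,\mathbf{Q}_\l$ are self-adjoint --- also to $\varrho\mathbf{P}_\l=\varrho$, to $\mathbf{Q}_\l\varrho=0$, and to $\varrho\mathbf{Q}_\l=0$.

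Next I would prove that $\mathrm{Null}(\beta_\l)$ does not depend on $\l$. Fix $\l,\mu>0$ and a unit vector $\xi\in\mathrm{Null}(\beta_\l)$, and let $\varrho_\xi\in\X_+$ be the rank-one orthogonal projection onto $\mathrm{span}\{\xi\}$. Since $\mathrm{Ran}\,\varrho_\xi=\mathrm{span}\{\xi\}\subset\mathrm{Null}(\beta_\l)$, the first step gives $\la\Xi_\l,\varrho_\xi\ra=0$, hence $\varrho_\xi\in\mathcal{H}$. Because, for $u\in\X_+$, the condition $\la\Xi_\l,u\ra=0$ is independent of $\l>0$, this forces $\la\Xi_\mu,\varrho_\xi\ra=0$; applying the first step with $\mu$ yields $\xi\in\mathrm{Null}(\beta_\mu)$. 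Exchanging the roles of $\l$ and $\mu$ gives $\mathrm{Null}(\beta_\l)=\mathrm{Null}(\beta_\mu)=:N$; write $\mathbf{P}$ for the orthogonal projection onto $N$ and $\mathbf{Q}=\mathbf{Id}_{\mathfrak{h}}-\mathbf{P}$. Inserting this into the definition \eqref{H1} of $\mathcal{H}$ --- in which, by the same $\l$-independence, ``for all $\l$'' may be replaced by ``for one $\l$'' --- and using the chain of equivalences of the first step, we conclude that $\varrho\in\X_+$ lies in $\mathcal{H}$ if and only if $\mathbf{P}\varrho=\varrho=\varrho\mathbf{P}$, equivalently $\mathbf{Q}\varrho=\varrho\mathbf{Q}=0$, which is the asserted description.

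I do not expect a serious obstacle: the two substantial ingredients --- the representation $\la\Xi_\l,\varrho\ra=\mathrm{Trace}[\beta_\l\varrho]$ from \cite[Theorem 5]{class} and the $\l$-independence of honesty established earlier --- are already available. The one point that calls for a little care is the first step: the trace-cyclicity identity and the passage through the (generally non-closed) range of $\varrho$ to the projection identities, together with the equivalence $\mathbf{Q}_\l\varrho=0\Leftrightarrow\varrho\mathbf{Q}_\l=0$, which rests on $\varrho=\varrho^\star$ and $\mathbf{Q}_\l=\mathbf{Q}_\l^\star=\mathbf{Q}_\l^2$. The conceptually decisive move is the use of rank-one states in the second step, which transports the $\l$-independence from membership in $\mathcal{H}$ to the null spaces of the operators $\beta_\l$.
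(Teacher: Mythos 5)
Your argument is correct, but it takes a genuinely different route from the paper. The paper first invokes its structure theorem (that $\mathcal{H}$ is a closed hereditary subcone of $\X_+$) and then Davies' characterization of closed hereditary cones in $\mathscr{T}_s(\mathfrak{h})$, which produces a closed subspace $\mathfrak{h}_0=\{h\,;\,|h\rangle\langle h|\in\mathcal{H}\}$ and the identity $\mathcal{H}=\{\varrho\in\X_+\,;\,\varrho=\mathbf{P}\varrho=\varrho\mathbf{P}\}$ with $\mathbf{P}$ the projection onto $\mathfrak{h}_0$; the only computation left is to identify $\mathfrak{h}_0$ with $\mathrm{Null}(\beta_\l)$ by evaluating $\mathrm{Trace}[\beta_\l\,|h\rangle\langle h|]$ in a basis containing $h/\|h\|$, and the $\l$-independence of $\mathrm{Null}(\beta_\l)$ falls out because $\mathfrak{h}_0$ is defined without reference to $\l$. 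You instead bypass the hereditary-cone machinery entirely: the factorization $\mathrm{Trace}[\beta_\l\varrho]=\mathrm{Trace}\big[(\beta_\l^{1/2}\varrho^{1/2})^\star(\beta_\l^{1/2}\varrho^{1/2})\big]$ converts $\la\Xi_\l,\varrho\ra=0$ directly into the range condition $\mathrm{Ran}\,\varrho\subset\mathrm{Null}(\beta_\l)$, i.e.\ $\mathbf{P}_\l\varrho=\varrho=\varrho\mathbf{P}_\l$, and you recover the $\l$-independence of the null spaces by feeding rank-one states through the previously established $\l$-independence of $\la\Xi_\l,u\ra=0$ --- exactly the ingredient the paper also relies on, but used at the level of $\mathrm{Null}(\beta_\l)$ rather than of $\mathfrak{h}_0$. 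What your approach buys is self-containedness and a slightly stronger conclusion en route (the explicit equivalence $\la\Xi_\l,\varrho\ra=0\Leftrightarrow\mathrm{Ran}\,\varrho\subset\mathrm{Null}(\beta_\l)$ for every positive $\varrho$, not only rank-one ones), with only standard facts about square roots and Hilbert--Schmidt operators; what the paper's route buys is that the description of $\mathcal{H}$ by a projection is exhibited as an instance of a general structural fact about closed hereditary subcones of $\mathscr{T}_s(\mathfrak{h})_+$, which is in keeping with the abstract viewpoint of the rest of the section. Both proofs legitimately use the same two external inputs, namely the representation $\la\Xi_\l,\varrho\ra=\mathrm{Trace}[\beta_\l\varrho]$ and the $\l$-independence of honesty, so no gap remains in your version.
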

\begin{proof} Let $\l >0$ be fixed. According to Theorem 3.15, $\mathcal{H}$ is a closed hereditary subcone of $\mathfrak{X}_+$. On the other hand, closed hereditary cones of $\X$ are characterized in [10, Lemma 3.2, P. 54-55] which tells us that the set
$$\mathfrak{h}_0=\left\{h \in \mathfrak{h}\,;\,|h\rangle\langle h| \in \mathcal{H}\right\}$$
is a closed linear subspace\footnote{Notice that, in [10, Lemma 3.2, P. 54-55], Davies calls ideal what we call  closed hereditary subcone} of $\mathfrak{h}$ and
$$\mathcal{H}=\left\{\varrho \in \mathfrak{X}_+\,;\, \varrho=\mathbf{P}  \varrho=\varrho\mathbf{P}\,\right\}$$
where $\mathbf{P}$ is the orthogonal projection of $\mathfrak{h}$ onto $\mathfrak{h}_0$ while $|h\rangle\langle h|$ denotes the one-dimensional trace class operator : $\:x \mapsto (x,h)h.$ The proof consists in showing that $\mathrm{Null}(\beta_\l)=\mathfrak{h}_0$ for any $\l >0$. First, let $h \in \mathfrak{h}_0$, $h \neq 0$ and let $\varrho=|h\rangle\langle h|$. For any  orthonormal basis $(e_n)_n$ of $\mathfrak{h}$ we have  \begin{equation*}\begin{split}\mathrm{Trace}[\beta_\lambda \varrho]&=\sum_n (\beta_\lambda \varrho (e_n),e_n)=\sum_n (\varrho e_n, \beta_\lambda(e_n))\\
 &=\sum_n (h,e_n)\,(h,\beta_\lambda(e_n)) =\sum_n(h,e_n)\,(\beta_\lambda(h),e_n).\end{split}\end{equation*}
Choosing in particular a basis $(e_n)_n$ with $e_0=h/\|h\|$, one gets that
$$\mathrm{Trace}[\beta_\lambda \varrho]=0 \Longleftrightarrow (\beta_\lambda(h),h)=0 \Longleftrightarrow h \in \mathrm{Null}(\beta_\lambda)$$
since $\beta_\l \geq 0.$ This proves that $\mathfrak{h}_0=\mathrm{Null}(\beta_\l)$ which, in particular, turns out to be independent of $\l >0$. Finally, since $\mathbf{PQ}=0$ and $\mathbf{P+Q}=\mathbf{Id}$, we see that $\varrho= \mathbf{P}  \varrho=\varrho\mathbf{P}$ amounts to $\mathbf{Q}  \varrho=\varrho\,\mathbf{Q}=0$. This is equivalent to $\mathbf{Q}  \varrho\, \mathbf{Q}=0.$
\end{proof}
This allows to provide a full characterization of $\mathscr{H}$:
\begin{cor} One has $\mathscr{H}=\mathcal{H}-\mathcal{H}=\left\{\varrho \in \mathfrak{X}\,;\, \varrho=\mathbf{P}  \varrho=\varrho\mathbf{P}\,\right\}.$
\end{cor}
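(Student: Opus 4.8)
The plan is to deduce the description of $\mathscr{H}=\mathcal{H}-\mathcal{H}$ directly from the characterization of $\mathcal{H}$ established in the preceding theorem, namely
$$\mathcal{H}=\left\{\varrho \in \mathfrak{X}_+\,;\, \varrho=\mathbf{P}\varrho=\varrho\mathbf{P}\,\right\},$$
together with the general structural result of Theorem \ref{structure} (which applies since $\X=\mathscr{T}_s(\mathfrak{h})$ is an $AL$-space). First I would observe that the set $\mathscr{R}:=\left\{\varrho \in \mathfrak{X}\,;\, \varrho=\mathbf{P}\varrho=\varrho\mathbf{P}\right\}$ is a linear subspace of $\X$, being the kernel of the bounded linear map $\varrho \mapsto (\varrho-\mathbf{P}\varrho,\varrho-\varrho\mathbf{P})$, or equivalently the set of self-adjoint trace class operators $\varrho$ with $\mathbf{Q}\varrho=\varrho\mathbf{Q}=0$, i.e. $\mathbf{Q}\varrho\mathbf{Q}=0$ (using self-adjointness and $\mathbf{P+Q}=\mathbf{Id}$). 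The inclusion $\mathcal{H}\subseteq\mathscr{R}$ is immediate from the theorem, hence $\mathscr{H}=\mathcal{H}-\mathcal{H}\subseteq\mathscr{R}$ since $\mathscr{R}$ is a linear subspace.

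For the reverse inclusion, let $\varrho \in \mathscr{R}$. Since $\X=\mathscr{T}_s(\mathfrak{h})$ is a lattice, split $\varrho=\varrho_+-\varrho_-$ into its positive and negative parts, with $\varrho_\pm \in \X_+$ and $|\varrho|=\varrho_++\varrho_-$. The key point is that $\varrho=\mathbf{P}\varrho=\varrho\mathbf{P}$ forces $\varrho_\pm=\mathbf{P}\varrho_\pm=\varrho_\pm\mathbf{P}$, i.e. $\varrho_\pm \in \mathcal{H}$; this is what I expect to be the main obstacle, since the lattice operations in $\mathscr{T}_s(\mathfrak{h})$ are \emph{not} given by pointwise functional calculus and one must argue with a little care. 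The cleanest route is: from $\mathbf{Q}\varrho=\varrho\mathbf{Q}=0$ one gets that $\varrho$ leaves $\mathbf{P}\mathfrak{h}$ invariant, annihilates $\mathbf{Q}\mathfrak{h}$, and restricts to a self-adjoint trace class operator on the Hilbert space $\mathbf{P}\mathfrak{h}$; the Jordan decomposition of $\varrho$ as an operator on $\mathfrak{h}$ therefore coincides with (the trivial extension of) the Jordan decomposition of $\varrho|_{\mathbf{P}\mathfrak{h}}$ on $\mathbf{P}\mathfrak{h}$, so that $\varrho_\pm$ also annihilate $\mathbf{Q}\mathfrak{h}$ and leave $\mathbf{P}\mathfrak{h}$ invariant, i.e. $\mathbf{Q}\varrho_\pm=\varrho_\pm\mathbf{Q}=0$. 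Equivalently, one may note $\mathscr{R}$ is precisely the band $\{\varrho\,;\,|\varrho|\leq \lambda\,\mathbf{P}\text{ for some }\lambda\geq 0\}^{\perp\perp}$ generated appropriately, but the invariant-subspace argument is the most transparent.

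Once $\varrho_\pm \in \mathcal{H}$ is established, we conclude $\varrho=\varrho_+-\varrho_- \in \mathcal{H}-\mathcal{H}=\mathscr{H}$, which gives $\mathscr{R}\subseteq\mathscr{H}$ and hence equality. As a sanity check consistent with Theorem \ref{structure}, this also re-exhibits $\mathscr{H}$ as a closed lattice ideal: it is the range $\mathbf{P}\X\mathbf{P}$ of the band projection $\varrho \mapsto \mathbf{P}\varrho\mathbf{P}$ on $\mathscr{T}_s(\mathfrak{h})$, with disjoint complement $\mathscr{H}_d=\{\varrho\,;\,\mathbf{P}\varrho\mathbf{P}=0\}$. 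No step beyond the Jordan-decomposition bookkeeping should require new ideas; the whole argument is short once the identification $\varrho_\pm=\mathbf{P}\varrho_\pm\mathbf{P}$ is in hand.
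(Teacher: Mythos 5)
Your proof is correct, and at bottom it is the same argument as the paper's: everything reduces to showing that if $\varrho=\mathbf{P}\varrho=\varrho\mathbf{P}$ then the positive and negative parts $\varrho_\pm$ satisfy the same relations, hence lie in $\mathcal{H}$ by the preceding theorem. You obtain this by the invariant-subspace/block-diagonal observation (equivalently, functional calculus applied to $\varrho=\mathbf{P}\varrho\mathbf{P}$ with functions vanishing at $0$), whereas the paper exhibits the same decomposition more concretely: it expands $\varrho=\sum_n\alpha_n|\mathbf{P}e_n\rangle\langle\mathbf{P}e_n|$ in an eigenbasis, recalls from the proof of the theorem that each rank-one operator $|\mathbf{P}e_n\rangle\langle\mathbf{P}e_n|$ belongs to $\mathcal{H}$, and splits $\alpha_n=\alpha_n^+-\alpha_n^-$; the resulting $\varrho^\pm$ are exactly your $\varrho_\pm$. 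So the difference is only in how the Jordan decomposition is produced (spectral series versus functional calculus); your version bypasses the rank-one operators and any summation/closedness bookkeeping. One slip you should correct in the write-up: $\mathscr{T}_s(\mathfrak{h})$ is \emph{not} a vector lattice, let alone an $AL$-space (Kadison's anti-lattice theorem), which is precisely why the paper handles it under the predual-of-a-von-Neumann-algebra case of Proposition \ref{propo:von} rather than the $AL$-case; accordingly the phrases ``since $\X$ is a lattice'', the parenthetical justification for invoking Theorem \ref{structure} (which in fact holds in any abstract state space), and the ``band projection''/``closed lattice ideal'' language in your sanity check are misstated. This does not affect the validity of your argument, since the decomposition $\varrho=\varrho_+-\varrho_-$ with $|\varrho|=\varrho_++\varrho_-$ that you actually use comes from the functional calculus for self-adjoint trace class operators and not from any lattice structure, but the justification should be rephrased in those terms.
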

\begin{proof} The fact that $\mathscr{H} \subset \left\{\varrho \in \mathfrak{X}\,;\, \varrho=\mathbf{P}  \varrho=\varrho\mathbf{P}\,\right\}$ is clear. Conversely, let $\varrho \in \X$ be such that $\varrho= \mathbf{P}  \varrho=\varrho\,\mathbf{P}$. Since $\varrho \in \mathscr{T}_s(\mathfrak{h})$, one has
$$\varrho=\sum_n \alpha_n |e_n \rangle \langle e_n|$$
where $(e_n)_n$ is an orthonormal basis  of $\mathfrak{h}$ made of eigenvectors of $\varrho$ associated to the real eigenvalues $(\alpha_n)_n$, i.e. $\varrho(h)=\sum_n \alpha_n (h,e_n)e_n$ for any $h \in \mathfrak{h}.$ Since $\varrho=\varrho\,\mathbf{P}$, one has
$$\varrho(h)=\sum_n \alpha_n (h,e_n)e_n=\sum_n \alpha_n (\mathbf{P}h,e_n)e_n=\sum_n \alpha_n (h,\mathbf{P}e_n)e_n \qquad \qquad  \forall h \in \mathfrak{h}$$
while, since $\mathbf{P}\varrho=\varrho$, one has $\varrho(h)=\sum_n \alpha_n (h,\mathbf{P}e_n)\mathbf{P}e_n$ for any $h \in \mathfrak{h}$. In particular,
$$\varrho=\sum_n \alpha_n |\mathbf{P}e_n \rangle \langle \mathbf{P}e_n|.$$
As we saw in the proof of the above theorem,  $|\mathbf{P}e_n \rangle \langle \mathbf{P}e_n| \in \mathcal{H}$ for any $n \in \mathbb{N}$ so that, writing $\alpha_n=\alpha_n^+-\alpha_n^-$ with $\alpha_n^\pm \geq 0$, we see that $\varrho=\varrho^+ - \varrho^-$ with $\varrho^\pm \in \mathcal{H}.$
\end{proof}
\subsection{Sufficient conditions of honesty} We provide here sufficient conditions of honesty based on the above
Theorem \ref{char} and on a new derivation of the functional $\bl$

\begin{theo}\label{spect} For any $\l >0$, let $(\bi_n(\l))_n \subset \X^\star$ be defined
inductively by
$$\bi_{n+1}(\l)=\left[\B(\l-\A)^{-1}\right]^\star\bi_n(\l),\qquad
\bi_0(\l)=\mathbf{\Psi}$$ where  we recall that $\mathbf{\Psi}$ is
the positive functional defined in \eqref{psi}. Then,
$(\bi_n(\l))_n$ is nonincreasing and converges in the weak-$\star$
topology of $\X$ to $\bi(\l)$ such that
\begin{equation}\label{betastar}\left[\B(\l-\A)^{-1}\right]^\star\bi(\l)=\bi(\l).\end{equation}
 Moreover,
$\bi(\l)=\bl$ for all $\l >0 $  and $\bl $ is the maximal element of
$\{\psi \in \X^\star,\,\psi \leq \mathbf{\Psi}\}$ satisfying
\eqref{betastar}.\end{theo}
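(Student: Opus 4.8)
The plan is to study the adjoint of the bounded positive operator $\J(\l):=\B(\l-\A)^{-1}$, for which $\bi_n(\l)=\big(\J(\l)^\star\big)^n\mathbf{\Psi}$, and to exploit monotonicity in the dual order. First I would recall from the proof of Theorem~\ref{kato} the inequality $\la \mathbf{\Psi},\J(\l)u\ra\leq \|u\|=\la\mathbf{\Psi},u\ra$, valid for every $u\in\X_+$; this says precisely that $\J(\l)^\star\mathbf{\Psi}\leq \mathbf{\Psi}$ in $\X^\star_+$. Since $\J(\l)$ maps $\X_+$ into itself, $\J(\l)^\star$ is a positive operator on $\X^\star$, hence order-preserving; applying it repeatedly to $\bi_1(\l)=\J(\l)^\star\mathbf{\Psi}\leq \mathbf{\Psi}=\bi_0(\l)$ gives $0\leq \bi_{n+1}(\l)\leq \bi_n(\l)\leq \mathbf{\Psi}$ for all $n$, which is the asserted monotonicity and also shows $\|\bi_n(\l)\|\leq 2M$ via the non-flat decomposition $u=u_1-u_2$.

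Next I would establish the weak-$\star$ convergence. For $u\in\X=\X_+-\X_+$, write $u=u_1-u_2$ with $u_i\in\X_+$; each sequence $\big(\la\bi_n(\l),u_i\ra\big)_n$ is nonincreasing and bounded below by $0$, hence convergent, so $\la\bi_n(\l),u\ra$ converges. The pointwise limit is linear and, by the uniform bound, defines $\bi(\l)\in\X^\star$ with $0\leq\bi(\l)\leq\mathbf{\Psi}$; this is exactly the weak-$\star$ convergence $\bi_n(\l)\to\bi(\l)$. To get \eqref{betastar}, pass to the limit in $\bi_{n+1}(\l)=\J(\l)^\star\bi_n(\l)$: for each fixed $u$, $\la\J(\l)^\star\bi_n(\l),u\ra=\la\bi_n(\l),\J(\l)u\ra\to\la\bi(\l),\J(\l)u\ra=\la\J(\l)^\star\bi(\l),u\ra$ since adjoints are weak-$\star$ continuous, while the left-hand side converges to $\la\bi(\l),u\ra$; hence $\J(\l)^\star\bi(\l)=\bi(\l)$. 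The identification $\bi(\l)=\bl$ is then immediate: since $\la\bi_n(\l),u\ra=\la\mathbf{\Psi},\big[\B(\l-\A)^{-1}\big]^nu\ra$, Lemma~\ref{lemm1.4.mkvo} forces $\la\bi_n(\l),u\ra\to\la\bl,u\ra$, and uniqueness of the limit yields $\bi(\l)=\bl$ for every $\l>0$.

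Finally, maximality. The functional $\bl=\bi(\l)$ belongs to $\{\psi\in\X^\star:\psi\leq\mathbf{\Psi}\}$ (because $\bi(\l)\leq\bi_0(\l)=\mathbf{\Psi}$) and satisfies \eqref{betastar}. Conversely, if $\psi\in\X^\star$ satisfies $\psi\leq\mathbf{\Psi}$ and $\J(\l)^\star\psi=\psi$, then iterating the fixed-point relation and using that $\J(\l)^\star$ is order-preserving gives $\psi=\big(\J(\l)^\star\big)^n\psi\leq\big(\J(\l)^\star\big)^n\mathbf{\Psi}=\bi_n(\l)$ for every $n$; letting $n\to\infty$ in the weak-$\star$ topology yields $\psi\leq\bi(\l)=\bl$. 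Every step here is a direct consequence of the positivity of $\J(\l)$ together with Lemma~\ref{lemm1.4.mkvo}, so I do not expect a genuine obstacle; the only point demanding some care is the bookkeeping between weak-$\star$ and norm convergence, together with the uniform bound coming from non-flatness of $\X_+$, needed to legitimately pass to the limit in the adjoint relations.
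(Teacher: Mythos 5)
Your proposal is correct and follows essentially the same route as the paper's proof: monotonicity of $(\bi_n(\l))_n$ from positivity of $\left[\B(\l-\A)^{-1}\right]^\star$ and $\left[\B(\l-\A)^{-1}\right]^\star\mathbf{\Psi}\leq\mathbf{\Psi}$, weak-$\star$ convergence via monotone bounded scalar sequences, passage to the limit in the recursion to get \eqref{betastar}, identification $\bi(\l)=\bl$ through Lemma \ref{lemm1.4.mkvo}, and maximality by iterating the fixed-point relation against $\mathbf{\Psi}$. No gaps; only the bookkeeping (uniform bound via non-flatness, pointwise limits) is spelled out slightly more explicitly than in the paper.
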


{\begin{proof} It is clear that $\left[\B(\l-\A)^{-1}\right]^\star$ is a
positive contraction in $\X^\star$. Then, for all $\bi \in
\X_+^\star$ with $\|\bi\| \leq 1$,
$$\left\|\left[\B(\l-\A)^{-1}\right]^\star\bi\right\|\leq 1$$
or, in an equivalent way,
$$\la
\left[\B(\l-\A)^{-1}\right]^\star\bi,u\ra \leq \|u\|=\la
\mathbf{\Psi},u\ra,\qquad \forall  u \in \X_+,$$ i.e.
$\mathbf{\Psi}-\left[\B(\l-\A)^{-1}\right]^\star\bi$ is an
element of the positive cone of $\X^\star$. Actually, it is
straightforward to see that, for any given $u \in \X_+$, the
sequence $\left(\la \bi_n(\l), u \ra \right)_n$ is bounded and
nonincreasing in $\mathbb{R}_+$. This means that $(\bi_n(\l))_n$
converges in the weak-$\star$ topology to some $\bi(\l) \leq
\mathbf{\Psi}$. Let $u \in \X_+$ be given. Then,
$$\la \psi_{n+1}(\l),u \ra
=\la \left[\B(\l-\A)^{-1}\right]^\star\bi_n(\l),u \ra =\la
\bi_n(\l),\B(\l-\A)^{-1}u\ra$$ so, letting $n \to \infty$,
$$\la \bi (\l),u\ra=\la \bi(\l) ,\B(\l-\A)^{-1}u\ra$$
which shows \eqref{betastar}. Now, since
\begin{equation*}\begin{split}
\la \bl,u\ra&=\lim_{n \to \infty} \la \mathbf{\Psi}, \bigg[
\B(\l-\A)^{-1}\bigg]^{n+1}u\ra\\ &=\lim_{n \to \infty}\la
\bigg(\left[\B(\l-\A)^{-1}\right]^{n+1}\bigg)^\star \mathbf{\Psi},u \ra\\
&=\lim_{n\to \infty}\la\bi_{n+1}(\l),u \ra=\la \bi (\l),u\ra
\end{split}\end{equation*}
one sees that $\bi (\l)=\bl.$ Let us now prove that $\bi(\l)=\bl$ is
the maximal element of $\{\psi \in \X^\star,\,0 \leq \psi \leq
\mathbf{\Psi}\}$ satisfying \eqref{betastar} $(\l >0)$. To do so,
let $\psi$ be in the positive cone of $\X^\star$, $\psi \leq
\mathbf{\Psi}$ be such that
$\left[\B(\l-\A)^{-1}\right]^\star\psi=\psi$. Then,
$$\psi=\bigg(\left[\B(\l-\A)^{-1}\right]^\star\bigg)^n \psi \leq
 \bigg(\left[\B(\l-\A)^{-1}\right]^\star\bigg)^n \mathbf{\Psi}$$
 which proves, letting $n$ go to infinity, that $\psi \leq \bl$.
\end{proof}}

 As a consequence,
one has
\begin{cor}
Assume there exists $\l >0$ such that  $\B(\l-\A)^{-1}$ is
irreducible. Then, the whole
semigroup $\Tt$ is honest if and only if there is some $u \in \X_+$,
$u \neq 0$, for which the trajectory $(\T(t)u)_{t \geq 0}$ is
honest.
\end{cor}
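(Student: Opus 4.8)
The plan is to settle the easy implication first --- if $\Tt$ is honest then, by definition, every trajectory is honest, so one may take \emph{any} nonzero $u\in\X_+$ --- and then concentrate on the converse. So fix $\l>0$ for which $T:=\B(\l-\A)^{-1}$ is irreducible, and suppose there is some $u\in\X_+\setminus\{0\}$ whose trajectory $(\T(t)u)_{t\geq0}$ is honest. By Theorem \ref{char} this is the same as saying $u\in\mathcal{H}$, with $\mathcal{H}$ as in \eqref{H1}; in particular $\mathcal{H}\neq\{0\}$, and by Theorem \ref{structure} we know that $\mathcal{H}$ is a closed hereditary subcone of $\X_+$.

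The heart of the argument is to check that $\mathcal{H}$ is invariant under $T$. Given $v\in\mathcal{H}$, positivity of $T$ gives $w:=\B(\l-\A)^{-1}v\in\X_+$, and Theorem \ref{spect} provides the identity $\left[\B(\l-\A)^{-1}\right]^\star\bl=\bl$, so that $\la\bl,w\ra=\la\bl,\B(\l-\A)^{-1}v\ra=\la\bl,v\ra=0$ since $v\in\mathcal{H}$. As $w\in\X_+$ and $\la\bl,w\ra=0$, Theorem \ref{char} (whose characterization is insensitive to the choice of the spectral parameter) yields that $(\T(t)w)_{t\geq0}$ is honest, i.e. $w\in\mathcal{H}$. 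Hence $T(\mathcal{H})\subseteq\mathcal{H}$: thus $\mathcal{H}$ is a nonzero closed hereditary subcone left invariant by the irreducible operator $T$, which forces $\mathcal{H}=\X_+$. Therefore every element of $\X_+$ generates a honest trajectory, that is, $\Tt$ is honest.

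The step I expect to need the most attention is the interface with the working notion of irreducibility of the bounded positive operator $T=\B(\l-\A)^{-1}$: above I use that $T$ admits no closed invariant hereditary subcone (face) of $\X_+$ other than $\{0\}$ and $\X_+$. If one prefers a dual formulation, the conclusion comes out even more quickly: from $\left[\B(\l-\A)^{-1}\right]^\star\bl=\bl$ one gets $\la\bl,T^nu\ra=\la\bl,u\ra=0$ for all $n$, so the forward orbit of the nonzero vector $u\in\X_+$ is annihilated by the positive functional $\bl$; irreducibility of $T$ then forces $\bl=0$, and Corollary \ref{coro:equiva} (equivalently, the remark following Theorem \ref{char}) gives $\G=\overline{\A+\B}$, i.e. $\Tt$ honest. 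In either route there is no genuine computation; everything reduces to invoking Theorems \ref{char}, \ref{spect} and \ref{structure}.
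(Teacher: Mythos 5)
Your dual formulation is correct and is essentially the paper's own proof (its ``Proof 2''): the paper likewise combines Theorem \ref{spect} (the fixed-point identity $\left[\B(\l-\A)^{-1}\right]^\star\bl=\bl$) with the definition of irreducibility to show that $\bl\neq 0$ would force $\la\bl,z\ra>0$ for every $z\in\X_+\setminus\{0\}$, contradicting $\la\bl,u\ra=0$; hence $\bl=0$ and the whole semigroup is honest by Corollary \ref{coro:equiva}. Your primary route, however, as literally stated leans on a different notion of irreducibility for $T=\B(\l-\A)^{-1}$, namely the absence of nontrivial closed invariant hereditary subcones of $\X_+$; this is neither the paper's working definition (for every $u\in\X_+\setminus\{0\}$ and every nonzero $\omega\in\X^\star_+$ there is $n$ with $\la\omega,T^nu\ra>0$) nor an equivalence established anywhere in the paper, and deducing the facial version from the duality version would require attaching to a proper closed face a nonzero \emph{positive} functional vanishing on it, which is not automatic in a general ordered Banach space (Hahn--Banach separation does not give positivity of the separating functional). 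In the case at hand the gap is harmless, because $\mathcal{H}$ is by construction the zero set in $\X_+$ of the positive functional $\bl$, so the contradiction can be run directly with $\omega=\bl$ --- but that is exactly your dual argument, so the first route adds nothing beyond it. For completeness, the paper also gives a genuinely different first proof that you do not use: it shows from the resolvent series that $(\l-\G)^{-1}u$ is quasi-interior for every $u\in\X_+\setminus\{0\}$, hence that $\Tt$ itself is irreducible, and then concludes via Theorem \ref{theo:irr}; that route buys the intermediate statement that irreducibility of $\B(\l-\A)^{-1}$ implies irreducibility of the perturbed semigroup, which your argument does not provide.
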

{\begin{proof} We give two proofs of this result. The first one uses
Theorem \ref{theo:irr} and the second one the spectral
interpretation of the functional $\bl$.

\indent {\it Proof 1}. Let $u \in \X_+ \setminus \{0\}$ and $\omega
\in \X^\star_+ \setminus \{0\}$. Then, $(\l-\A^\star)^{-1}\omega \in
\X^\star_+\setminus \{0\}$ and
\begin{equation*}\begin{split}
\la \omega, (\l-\G)^{-1}u \ra &=\sum_{k=0}^\infty \la \omega,
(\l-\A)^{-1}\left[\B(\l-\A)^{-1}\right]^k u\ra\\
&=\sum_{k=0}^\infty \la (\l-\A^\star)^{-1}\omega,
\left[\B(\l-\A)^{-1}\right]^k u \ra >0\end{split}
\end{equation*}
where we used the fact that there exists $k_0>0$ such that
$$\la
(\l-\A^\star)^{-1}\omega,
\left[\B(\l-\A)^{-1}\right]^{k_0} u \ra
>0.$$ One obtains then that $\la \omega, (\l-\G)^{-1}u \ra >0$ for
any $\omega \in \X_+^\star \setminus \{0\}$, i.e. $(\l-\G)^{-1}u$ is
quasi-interior for any $u\in \X_+ \setminus \{0\}$. Thus, $\Tt$ is
irreducible and Theorem \ref{theo:irr} leads to the conclusion.

{\it Proof 2}. Let $\B(\l-\A)^{-1}$ be irreducible and
assume there exists some honest trajectory $(\T(t)u)_{t \geq 0}$
with $u \in \X_+ \setminus \{0\}.$ Then, from Theorem \ref{char},
$\la \bl,u\ra=0$. Assume that $\bl \neq 0$. Then, for any $z \in
\X_+ \setminus \{0\}$, there exists an integer $n \geq 0$ such that
$$\la \bl, \left[\B(\l-\A)^{-1}\right]^n z\ra >0.$$
According to Theorem \ref{spect}, it is clear that
$$\la \bl,z\ra=\la \left(\left[\B(\l-\A)^{-1}\right]^\star\right)^n
\bl, z\ra=\la \bl, \left[\B(\l-\A)^{-1}\right]^n z\ra$$
i.e. $\la \bl, z\ra >0$ for any $z \in \X_+\setminus \{0\}$. This is
a contradiction and, necessarily, $\bl = 0$. Thus, the whole
semigroup $\Tt$ is honest.
\end{proof}}

We end this section with two practical sufficient conditions
ensuring the existence of honest trajectories:
\begin{theo}\label{theo:sousol} Let $\l >0$ and $u \in \X_+$ be such that
\begin{equation}\label{soussol}
\B(\l-\A)^{-1}u \leq u,\end{equation} then the trajectory
$(\T(t)u)_{t \geq 0}$ is honest.\end{theo}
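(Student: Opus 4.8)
The plan is to reduce the statement, via Theorem \ref{char}, to showing $\la \bl,u\ra=0$, and then to exploit the variational formula of Lemma \ref{lemm1.4.mkvo}, which says that $\la \bl,u\ra=\lim_{n\to\infty}\la \mathbf{\Psi},[\B(\l-\A)^{-1}]^nu\ra$. Write $\J(\l)=\B(\l-\A)^{-1}$, a bounded positive operator on $\X$, as in the proof of Theorem \ref{kato}. Since $J(\l)^nu\in \X_+$ we have $\la \mathbf{\Psi},\J(\l)^nu\ra=\|\J(\l)^nu\|$, so it suffices to prove $\|\J(\l)^nu\|\to 0$.

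First I would use hypothesis \eqref{soussol}: because $\J(\l)$ is positive and $\J(\l)u\leq u$, an immediate induction gives $0\leq \J(\l)^{n+1}u\leq \J(\l)^nu\leq u$ for all $n$. Hence $(\J(\l)^nu)_n$ is a bounded nonincreasing sequence of $\X_+$; by the property recalled in the introduction that every bounded monotone sequence of $\X_+$ converges, it converges in $\X$ to some $w$ with $0\leq w\leq u$. By continuity of $\J(\l)$, $\J(\l)w=\lim_n \J(\l)^{n+1}u=w$, so $w\in \X_+$ is a fixed point of $\J(\l)$.

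The key step is to show $w=0$. Recall from the proof of Theorem \ref{kato} that, using \eqref{hypprinc1}, for every $v\in \X_+$ one has
\[
\|\J(\l)v\|=\la \mathbf{\Psi},\J(\l)v\ra\leq -\la \mathbf{\Psi},\A(\l-\A)^{-1}v\ra=\|v\|-\l\|(\l-\A)^{-1}v\|.
\]
Applying this with $v=w$ and using $\J(\l)w=w$ gives $\|w\|\leq \|w\|-\l\|(\l-\A)^{-1}w\|$, so $\|(\l-\A)^{-1}w\|=0$; since $(\l-\A)^{-1}$ is injective, $w=0$. Therefore $\|\J(\l)^nu\|\to \|w\|=0$, whence $\la \bl,u\ra=0$ by Lemma \ref{lemm1.4.mkvo}, and $(\T(t)u)_{t\geq 0}$ is honest by Theorem \ref{char} (equivalently, by Theorem \ref{equivalence}).

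I do not expect a serious obstacle here. The only point requiring care is the vanishing of the fixed point $w$: this does not follow from positivity alone but from the strict dissipation $\|\J(\l)v\|\leq \|v\|-\l\|(\l-\A)^{-1}v\|$ inherited from the substochasticity of $\ut$ together with \eqref{hypprinc1}. (Alternatively, one can telescope the same inequality along $v=\J(\l)^nu$ to get $\sum_n\|(\l-\A)^{-1}\J(\l)^nu\|\leq \l^{-1}\|u\|<\infty$, then pass to the limit using the monotone convergence $\J(\l)^nu\to w$ to conclude $(\l-\A)^{-1}w=0$; this is a minor variant of the same argument.)
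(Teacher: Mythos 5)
Your proof is correct. It starts exactly as the paper's does: positivity of $\B(\l-\A)^{-1}$ together with \eqref{soussol} makes the iterates $\left[\B(\l-\A)^{-1}\right]^n u$ a nonincreasing sequence in $\X_+$ dominated by $u$, hence norm convergent by the monotone convergence property of bounded monotone sequences in $\X_+$. The difference lies in how you conclude. The paper stops at convergence and simply invokes Theorem \ref{equivalence}: a norm-convergent sequence is in particular relatively weakly compact, so condition (i) holds and honesty follows; the fact that the limit is actually $0$ is hidden in the implication (i) $\Rightarrow$ (ii) of that theorem, which rests on the weak closedness of the graph of $\overline{\A+\B}$. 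You instead establish (ii) directly: the limit $w$ is a fixed point of $\J(\l)=\B(\l-\A)^{-1}$, and the dissipation estimate $\|\J(\l)v\|\leq\|v\|-\l\|(\l-\A)^{-1}v\|$ for $v\in\X_+$, already proved in the course of Theorem \ref{kato}, forces $(\l-\A)^{-1}w=0$ and hence $w=0$; honesty then follows from Lemma \ref{lemm1.4.mkvo} and Theorem \ref{char}. Your route is slightly longer than necessary given that Theorem \ref{equivalence} is available, but it is self-contained and more elementary at this point of the paper: it replaces the weak-compactness and graph-closedness argument by a quantitative contraction estimate, and it exhibits explicitly that the iterates converge to $0$ in norm, which is the sharpest form of the criterion.
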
 {\begin{proof} Since
$\B(\l-\A)^{-1}$ is positive, our assumption
\eqref{soussol} implies that the sequence $
([\B(\l-\A)^{-1}]^n u )_n$ is nonincreasing in $\X$ and
$$\left[\B(\l-\A)^{-1} \right]^nu \leq u,\qquad \forall n \geq 1.$$
Therefore the whole sequence
$\left([\B(\l-\A)^{-1}]^n u\right)_n$ is convergent in
$\X$ which ends the proof because of Theorem \ref{equivalence}.
\end{proof}}

This provides another honesty criterion in terms of sub-eigenvalues of
$\A+\B$.

\begin{cor} Assume that there exists $\l >0$ and $u \in \D(\A)_+$
such that $(\A+\B)u \leq \l u$, Then, $(\T(t)u)_{t \geq
0}$ is honest.
\end{cor}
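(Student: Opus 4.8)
The plan is to reduce the statement to Theorem~\ref{theo:sousol} by a change of unknown. First I would set
$$h:=\l u-(\A+\B)u.$$
The hypothesis $(\A+\B)u\leq\l u$ says exactly that $h\in\X_+$. Since $u\in\D(\A)\subset\D(\G)$ and $\G$ is an extension of $(\A+\B,\D(\A))$ (as established in the proof of Theorem~\ref{kato}), one has $(\l-\G)u=h$, i.e.\ $u=(\l-\G)^{-1}h$. By Proposition~\ref{closed} the set $\mathcal{H}$ of initial data generating honest trajectories is invariant under $(\l-\G)^{-1}$; hence it suffices to prove that $h\in\mathcal{H}$, equivalently (Theorems~\ref{equivalence} and \ref{char}) that $\|[\B(\l-\A)^{-1}]^n h\|\to 0$ as $n\to\infty$.

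For this I would use the telescoping identity
$$[\B(\l-\A)^{-1}]^n h=[\B(\l-\A)^{-1}]^{n-1}(\B u)-[\B(\l-\A)^{-1}]^{n}(\B u),\qquad n\geq 1,$$
which follows by writing $h=(\l-\A)u-\B u$ and using $\B(\l-\A)^{-1}\big((\l-\A)u\big)=\B u$. Thus the whole matter reduces to showing that the sequence $\big([\B(\l-\A)^{-1}]^k(\B u)\big)_{k\geq0}$ converges to $0$ in $\X$: granting this, the telescoping identity yields $[\B(\l-\A)^{-1}]^n h\to0$ in $\X$, a fortiori $\|[\B(\l-\A)^{-1}]^n h\|\to0$, so $h\in\mathcal{H}$ and we are done.

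To control the iterates of $\B(\l-\A)^{-1}$ on $\B u$, I would apply Theorem~\ref{theo:sousol} with the vector $\B u\in\X_+$ in the role of $u$. From $(\A+\B)u\leq\l u$ we get $\B u\leq(\l-\A)u$; applying the positive operator $(\l-\A)^{-1}$ gives $(\l-\A)^{-1}\B u\leq u$, and then applying $\B$ — which is order preserving on $\D(\A)$, since $a\leq b$ with $a,b\in\D(\A)$ forces $b-a\in\D(\A)\cap\X_+$ and hence $\B a\leq\B b$ — gives $\B(\l-\A)^{-1}(\B u)\leq\B u$. Theorem~\ref{theo:sousol} then shows that the trajectory emanating from $\B u$ is honest and, as in its proof, that the sequence $\big([\B(\l-\A)^{-1}]^k(\B u)\big)_{k}$ is nonincreasing in $\X$; being monotone, bounded below by $0$, and of vanishing norm (by Theorem~\ref{equivalence}), it converges to $0$ in $\X$, as required.

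I do not expect a serious obstacle: the only two points demanding a little care are the verification that $\B$ is order preserving on $\D(\A)$ (so that it may legitimately be applied to the inequality $(\l-\A)^{-1}\B u\leq u$) and the routine passage — using that bounded monotone sequences of $\X_+$ converge, recalled in the introduction — from ``nonincreasing with norm tending to $0$'' to ``convergent to $0$ in $\X$''. Everything else is bookkeeping with resolvent identities already available from Section~\ref{sec:kato}.
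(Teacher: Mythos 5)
Your argument is correct, but it transfers honesty back to $u$ by a different mechanism than the paper. The paper applies Theorem \ref{theo:sousol} to $z=(\l-\A)u$ (noting $\B(\l-\A)^{-1}z=\B u\leq z$, i.e. \eqref{soussol}), then passes to $v=(\l-\G)^{-1}z\in\mathcal{H}$ by Proposition \ref{closed}, and concludes from the order comparison $0\leq u=(\l-\A)^{-1}z\leq v$ together with the fact that $\mathcal{H}$ is a closed hereditary subcone of $\X_+$ (Theorem \ref{structure}). You instead apply Theorem \ref{theo:sousol} to $\B u$ (essentially the same monotone sequence, since $[\B(\l-\A)^{-1}]^{k+1}z=[\B(\l-\A)^{-1}]^{k}\B u$), and then treat $h=(\l-\G)u$ directly through the telescoping identity and Theorem \ref{equivalence}, finishing with the resolvent invariance of $\mathcal{H}$ from Proposition \ref{closed}. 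What your route buys is that it bypasses the structural information on $\mathcal{H}$ (heredity) and the comparison $(\l-\A)^{-1}\leq(\l-\G)^{-1}$, at the price of an explicit telescoping computation; the paper's route is shorter once Theorem \ref{structure} is in hand. Your auxiliary verifications (order preservation of $\B$ on $\D(\A)$, the identity $u=(\l-\G)^{-1}h$ because $\G$ extends $\A+\B$, independence of the honesty criterion of the choice of $\l$) are all sound; note only that once Theorem \ref{equivalence} gives $\|[\B(\l-\A)^{-1}]^{k}(\B u)\|\to 0$, convergence to $0$ in $\X$ is immediate, so the appeal to monotone convergence is superfluous -- indeed mere convergence of the iterates of $\B u$ would already suffice for the telescoping step.
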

\begin{proof} Define $z=(\l-\A)u$. One has $z \geq \B u \geq 0$ and
$z$ satisfies \eqref{soussol}. The trajectory $(\T(t)z)_{t \geq 0}$
is therefore honest from Theorem \ref{theo:sousol}. Defining
$v=(\l-\G)^{-1}z$, one has also that $(\T(t)v)_{t \geq 0}$ is honest
(see Proposition \ref{closed}). Since $0 \leq u=(\l-\A)^{-1}z \leq
v$,  $(\T(t)u)_{t \geq 0}$ is honest since $\mathcal{H}$ is a closed
hereditary subcone of $\X_+$ (see Theorem \ref{structure}).
\end{proof}

\subsection{Instantaneous dishonesty}\label{sub:instant} According to Definition \ref{defi:hon}, if a trajectory $(\T(t)u)_{t \geq 0}$ is
 not honest, then  there exists $t_0 \geq 0$ such that
\begin{equation}\label{dishon} \|\T(t_0)u\| <
\|u\|-\overline{\cc}\bigg(\int_0^{t_0} \T(s)u \d
s\bigg)\end{equation}This suggests to introduce the following mass
loss functional
$$\Delta_u (t)=\|\T(t)u\|-\|u\|+\overline{\cc}\bigg(\int_0^t \T(s)u\d
s\bigg),\qquad t \geq 0.$$ One has the following property:
\begin{lemme}
For any $u \in \X_+$, the mapping $t \geq 0 \mapsto \Delta_u(t)$ is
nonincreasing.
\end{lemme}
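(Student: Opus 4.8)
The plan is to show that $\Delta_u$ is nonincreasing by comparing its values at two times $t_1 \le t_2$ and exploiting the semigroup property together with the additivity of the norm on $\X_+$. First I would set $v = \T(t_1)u \in \X_+$ and rewrite, using $\T(t_2) = \T(t_2 - t_1)\T(t_1)$ and the change of variable $s \mapsto s - t_1$, the quantity
\[
\Delta_u(t_2) - \Delta_u(t_1) = \|\T(t_2-t_1)v\| - \|v\| + \overline{\cc}\bigg(\int_{t_1}^{t_2} \T(s)u\,\d s\bigg).
\]
Here I use that $\overline{\cc}$ is additive on sums of elements of $\D(\G)$ (being the restriction to $\D(\G)$ of the composition of the linear functional coming from $\cc$ with the continuous linear structure of $\D_\G$; more concretely it is additive because it arises as a limit of applications of the linear functional $\cc$ to nondecreasing partial sums), so that $\overline{\cc}\big(\int_0^{t_2}\T(s)u\,\d s\big) = \overline{\cc}\big(\int_0^{t_1}\T(s)u\,\d s\big) + \overline{\cc}\big(\int_{t_1}^{t_2}\T(s)u\,\d s\big)$, and likewise $\int_{t_1}^{t_2}\T(s)u\,\d s = \int_0^{t_2-t_1}\T(r)v\,\d r$.

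Thus it suffices to prove that for every $w \in \X_+$ and every $\tau \ge 0$,
\[
\|\T(\tau)w\| - \|w\| + \overline{\cc}\bigg(\int_0^\tau \T(r)w\,\d r\bigg) \le 0,
\]
i.e. that $\Delta_w(\tau) \le 0 = \Delta_w(0)$ for all initial data; in other words the mass loss functional is always nonpositive. This is exactly the inequality $\overline{\cc} \le \cc_0$ on $\D(\G)_+$ applied to $w_\tau := \int_0^\tau \T(r)w\,\d r \in \D(\G)_+$, combined with the identity \eqref{equalityG}, namely $\|\T(\tau)w\| - \|w\| = -\cc_0(w_\tau)$. Indeed these two facts give $\|\T(\tau)w\| - \|w\| + \overline{\cc}(w_\tau) = -\cc_0(w_\tau) + \overline{\cc}(w_\tau) = -\la \bl, w_\tau\ra \le 0$ since $\bl \in \X^\star_+$. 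So the proof reduces to two ingredients already established: the inequality $\overline{\cc} \le \cc_0$ on the positive part of $\D(\G)$ (stated in the introduction and in Definition \ref{defi:bl}, where $\la\bl,\cdot\ra = \cc_0\circ(\l-\G)^{-1} - \overline{\cc}\circ(\l-\G)^{-1} \ge 0$), and the elementary identity \eqref{equalityG}.

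The only point requiring a little care — and the step I expect to be the main obstacle — is the additivity of $\overline{\cc}$ over the decomposition $\int_0^{t_2} = \int_0^{t_1} + \int_{t_1}^{t_2}$ of the integral as elements of $\D(\G)$, since $\overline{\cc}$ is defined only on $\D(\G)$ and a priori only as a limit of partial sums of $\cc$ applied to certain resolvent-type terms. I would justify this by the Remark following Proposition \ref{prop1.1.mkvo}: $\overline{\cc}$ is continuous with respect to the graph norm of $\G$, hence it is a continuous linear functional on $\D_\G$, and in particular additive; the two integrals in question both lie in $\D(\G)$ with $\G$ acting on them via $\G\int_a^b \T(s)u\,\d s = \T(b)u - \T(a)u$, so the decomposition is a genuine equality in $\D_\G$ and $\overline{\cc}$ respects it. With that in hand, the monotonicity of $\Delta_u$ follows immediately from the chain of equalities above and the positivity of $\bl$.
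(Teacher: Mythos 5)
Your argument is, in substance, the paper's own proof: the authors compute directly
$\Delta_u(t_2)-\Delta_u(t_1)=\overline{\cc}\big(\int_{t_1}^{t_2}\T(s)u\,\d s\big)-\cc_0\big(\int_{t_1}^{t_2}\T(s)u\,\d s\big)$
(using $\T(t_2)u-\T(t_1)u=\G\int_{t_1}^{t_2}\T(s)u\,\d s$ and the linearity of $\overline{\cc}$) and conclude by comparing $\overline{\cc}$ and $\cc_0$ on $\D(\G)_+$; your passage through $v=\T(t_1)u$ and the identity $\int_{t_1}^{t_2}\T(s)u\,\d s=\int_0^{t_2-t_1}\T(r)v\,\d r$ is only a cosmetic reformulation of the same computation, and the additivity of $\overline{\cc}$ that worries you is unproblematic, since $u\mapsto\overline{\cc}\big((\l-\G)^{-1}u\big)$ is by construction a limit of linear functionals, so $\overline{\cc}$ is linear on $\D(\G)$. (Incidentally, the paper's printed conclusion ``$\geq 0$, since $\overline{\cc}$ always dominates $\cc_0$'' is a slip of direction; the inequality actually used, and the one consistent with the statement, is $\overline{\cc}\leq\cc_0$ on $\D(\G)_+$, exactly as you write.)

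One parenthetical in your reduction should be corrected, though it does not affect the proof. Writing $w_\tau=\int_0^\tau\T(r)w\,\d r$, you claim $\cc_0(w_\tau)-\overline{\cc}(w_\tau)=\la\bl,w_\tau\ra\geq 0$. By Definition \ref{defi:bl}, $\la\bl,u\ra=\cc_0\big((\l-\G)^{-1}u\big)-\overline{\cc}\big((\l-\G)^{-1}u\big)$, so what one actually gets is $\cc_0(w_\tau)-\overline{\cc}(w_\tau)=\la\bl,(\l-\G)w_\tau\ra$, and $(\l-\G)w_\tau=\l w_\tau+w-\T(\tau)w$ need not belong to $\X_+$; the positivity of $\bl$ therefore only yields $\cc_0\geq\overline{\cc}$ on $(\l-\G)^{-1}\X_+$, not on all of $\D(\G)_+$ and not directly at $w_\tau$. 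The sign you need is the inequality $\overline{\cc}\leq\cc_0$ on $\D(\G)_+$ itself, which is what the paper invokes (it is asserted in the introduction, and within the paper can be obtained from $\widehat{\cc}\leq\cc_0$ in Proposition \ref{hatccdef} together with $\widehat{\cc}=\overline{\cc}$ in Theorem \ref{final}); since you cite that inequality as your primary justification, your proof stands once the $\bl$ remark is deleted or amended.
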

{\begin{proof} Let $t_2 \geq t_1 \geq 0$ be fixed. Then,
\begin{equation*}\begin{split}
\Delta_u(t_2)-\Delta_u(t_1)&=\|\T(t_2)u\|-\|\T(t_1)u\|+\overline{\cc}(\int_{t_1}^{t_2}\T(s)u\d
s)\\
&=\la \mathbf{\Psi}, \T(t_2)u-\T(t_1)u\ra
+\overline{\cc}(\int_{t_1}^{t_2}\T(s)u\d s).
\end{split}\end{equation*}
Since $\T(t_2)u-\T(t_1)u=\G\ds \int_{t_1}^{t_2}\T(s)u\d s$, one sees
that
$$\Delta_u(t_2)-\Delta_u(t_1)=\overline{\cc}(\int_{t_1}^{t_2}\T(s)u\d
s)-\cc_0(\int_{t_1}^{t_2}\T(s)u\d s) \geq 0,$$ since
$\overline{\cc}$ always dominate $\cc_0$.
\end{proof}}

\begin{lemme}\label{24} Let $u \in \X_+$. If the trajectory $(\T(t)u)_{t \geq
0}$ is dishonest, then there exists $t_0 >0$ such that $\Delta_u(t)
<0$ for any $t >t_0$ and $\Delta_v(t) <0$ for any $t >0$ where
$v=\T(t_0)u$.
\end{lemme}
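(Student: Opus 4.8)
The plan is to build on three properties of the mass-loss functional $\Delta_u$: it vanishes at $t=0$, it is nonpositive, and it is continuous and nonincreasing on $[0,\infty)$. Indeed, \eqref{equalityG} gives $\|\T(t)u\|-\|u\|=-\cc_0(\int_0^t\T(s)u\,\d s)$, so that $\Delta_u(t)=\overline{\cc}(\int_0^t\T(s)u\,\d s)-\cc_0(\int_0^t\T(s)u\,\d s)$; since $\int_0^t\T(s)u\,\d s\in\D(\G)_+$ and $\overline{\cc}\leq\cc_0$ on $\D(\G)_+$, this yields $\Delta_u(t)\leq 0$ for all $t\geq0$, with $\Delta_u(0)=0$. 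Continuity follows from the continuity of $t\mapsto\|\T(t)u\|$ together with the continuity of $t\mapsto\int_0^t\T(s)u\,\d s$ for the graph norm of $\G$ and of $\overline{\cc}$ and $\cc_0$ for that same norm; the fact that $\Delta_u$ is nonincreasing is the preceding lemma.

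I would then record a shift identity: for $t_0\geq0$ and $v=\T(t_0)u$ one has $\Delta_v(t)=\Delta_u(t+t_0)-\Delta_u(t_0)$ for every $t\geq0$. This is proved exactly as in the preceding lemma, using $\T(t)v=\T(t+t_0)u$, the change of variable $\int_0^t\T(s)v\,\d s=\int_{t_0}^{t+t_0}\T(r)u\,\d r$, and the additivity of $\overline{\cc}$ on $\D(\G)_+$; it is what converts a statement about dishonesty of the $u$-trajectory after time $t_0$ into a statement about dishonesty of the $v$-trajectory after time $0$.

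With these preliminaries the core of the argument is the choice of $t_0$. Since the trajectory is dishonest, $\Delta_u(t_1)<0$ for some $t_1$, hence $\ell:=\inf_{t\geq0}\Delta_u(t)\in[-\infty,0)$. The choice of $t_0$ must be made with some care — taking $t_0=\inf\{t:\Delta_u(t)<0\}$ fails when this infimum equals $0$, i.e. when dishonesty is immediate — so instead I fix a level $c$ with $\ell<c<0$ (possible as $\ell<0$). By continuity and the intermediate value theorem, $c$ lies in the range of $\Delta_u$, so the level set $L=\{t\geq0:\Delta_u(t)=c\}$ is nonempty; it is closed by continuity and an interval by monotonicity; it is bounded above because $\Delta_u(T)<c$ for some $T$ (as $\ell<c$) forces $\Delta_u\leq\Delta_u(T)<c$ on $[T,\infty)$; and it is bounded away from $0$ since $\Delta_u(0)=0\neq c$. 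Setting $t_0:=\sup L$, we get $0<t_0<\infty$ and $\Delta_u(t_0)=c$. For $t>t_0$, monotonicity gives $\Delta_u(t)\leq c$ while $t\notin L$ gives $\Delta_u(t)\neq c$, hence $\Delta_u(t)<c<0$; this is the first assertion. For $t>0$, the shift identity then yields $\Delta_v(t)=\Delta_u(t+t_0)-c<0$, since $t+t_0>t_0$ and $\Delta_u(t+t_0)<c$ by the line just established; this is the second assertion.

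The only genuinely delicate point is this selection of $t_0$ as the right endpoint of a level set $\{\Delta_u=c\}$ for an intermediate value $c\in(\ell,0)$: one must take $c$ strictly above $\ell$ so that the level set stays bounded, and strictly below $0$ so that it stays away from $t=0$, after which monotonicity supplies the required strict decrease both on $(t_0,\infty)$ and, via the shift identity, for $v$ on $(0,\infty)$. Everything else is routine bookkeeping with the functionals $\overline{\cc}$ and $\cc_0$ and the semigroup law.
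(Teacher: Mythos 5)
Your proof is correct, and its skeleton is the same as the paper's: both arguments rest on $\Delta_u(0)=0$, $\Delta_u\leq 0$, continuity and monotonicity of $\Delta_u$, and both conclude with the same time-shift computation (your identity $\Delta_v(t)=\Delta_u(t+t_0)-\Delta_u(t_0)$ for $v=\T(t_0)u$ is exactly the paper's in-line calculation with the change of variable $\int_0^t\T(s)v\,\d s=\int_{t_0}^{t+t_0}\T(r)u\,\d r$, just recorded once as a general identity). The genuine difference is the selection of $t_0$. The paper takes $t_0:=\inf\{t>0\,:\,\Delta_u(t)<0\}$; monotonicity then gives $\Delta_u(t)<0$ for $t>t_0$, continuity gives $\Delta_u(t_0)=0$, and the shift computation yields $\Delta_v(t)=\Delta_u(t+t_0)<0$. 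This is shorter than your construction, but, as you anticipated, it produces $t_0=0$ when the dishonesty is immediate, so in that degenerate case the paper's argument delivers the conclusions only with $t_0=0$ (i.e. $v=u$), not literally the ``$t_0>0$'' of the statement — harmless for the way the lemma is used afterwards, which only needs some nonzero $z$ with an instantaneously dishonest trajectory. Your choice $t_0=\sup\{t\geq 0\,:\,\Delta_u(t)=c\}$ for an intermediate level $c\in(\inf_t\Delta_u(t),0)$, whose attainment comes from the intermediate value theorem, costs a little extra bookkeeping (closedness, boundedness and positivity of the level set) but gives $\Delta_u(t_0)=c<0$ and $\Delta_u<c$ strictly beyond $t_0$, hence a strictly positive $t_0$ in all cases. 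This care is not gratuitous: since $\Delta_u$ may become constant after some time, an arbitrary positive $t_0$ would not do, and your last-crossing time is precisely what guarantees that $\Delta_u$ stays strictly below $\Delta_u(t_0)$ on $(t_0,\infty)$. In short, your route is slightly longer but matches the statement as written uniformly, whereas the paper's is quicker but silently allows $t_0=0$.
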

{\begin{proof} By definition of dishonest trajectory and since
$\Delta_u(t) \leq 0$ for any $t \geq 0$, one has
$$t_0:=\inf\{t >0\,\text{ such that } \Delta_u(t) <0\}$$
is well-defined. Since $\Delta_u(\cdot)$ is nonincreasing, one has
$\Delta_u(t) < 0$ for any $t > t_0$. Moreover, since the mapping $t
\mapsto \Delta_u(t) \in (-\infty,0]$ is clearly continuous, one has
$\Delta_u(t)=0$ for any $t \in [0,t_0].$ Set $v=\T(t_0)u$. For any
$t >0$, since $\Delta_u(t+t_0) <0$ one has
$$\|\T(t)v\|=\|\T(t+t_0)u\| < \|u\|-\overline{\cc}(\int_0^{t+t_0}\T(s)u
\d s)$$ while the identity $\Delta_u(t_0)=0$ reads
$\|u\|=\|v\|+\overline{\cc}(\int_0^{t_0}\T(s)u\d s).$ Consequently,
\begin{equation*}\begin{split}
\|\T(t)v\| &< \|\T(t_0)u\|+\overline{\cc}\left(\int_0^{t_0}\T(s)u\d
s\right)-\overline{\cc}\left(\int_0^{t+t_0}\T(s)u \d s\right)\\
&=\|v\|-\overline{\cc}\left(\int_{t_0}^{t+t_0}\T(s)u\d
s\right)=\|v\|-\overline{\cc}\left(\int_{0}^t \T(r)v\d r\right)
\end{split}\end{equation*}
i.e. $\Delta_v(t) < 0$ for all $t >0$.\end{proof}}

To summarize, when the semigroup $\Tt$ is dishonest, that is, if
some trajectory $(\T(t)u)_{t \geq 0}$ is not honest, then it is
possible to find some $z \in \X_+ \setminus\{0\}$ such that the
trajectory emanating from $z$ is \textit{ instantaneously} {dishonest}, i.e. $\Delta_z(t) < 0$ for any $t >0$. In particular,
whenever
\begin{equation*} \la \mathbf{\Psi}, (\A+{\B})u \ra = 0
\qquad \forall u \in \D_+,\end{equation*} the semigroup $\Tt$ is
dishonest if and only if there exists some $z \in \X_+
\setminus\{0\}$ such that
$$\|\T(t)z\| < \|z\|, \qquad \forall t >0.$$
\begin{theo}
Assume that $\la \mathbf{\Psi}, (\A+{\B})u \ra = 0$ for any
$u \in \D_+.$ If $\G \neq \overline{\A+\B}$ then
$$\|\T(t)u\| < \|u\|, \qquad t
>0$$
for any quasi-interior $u \in \X_+$.
\end{theo}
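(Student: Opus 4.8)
The plan is to argue by contradiction. Assume $\G\neq\overline{\A+\B}$, let $u\in\X_+$ be quasi-interior, and suppose that $\|\T(t_0)u\|=\|u\|$ for some $t_0>0$. Since $\Tt$ is substochastic, the map $\tau\mapsto\|\T(\tau)u\|$ is nonincreasing (because $\|\T(\tau+s)u\|=\|\T(s)\T(\tau)u\|\leq\|\T(\tau)u\|$ as $\T(\tau)u\in\X_+$), and it equals $\|u\|$ at both $\tau=0$ and $\tau=t_0$; hence $\|\T(t)u\|=\|u\|$ for every $t\in[0,t_0]$.

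The decisive step is to show that conservation of this single quasi-interior trajectory on $[0,t_0]$ forces conservation of the \emph{whole} semigroup on $[0,t_0]$. For each $t\in[0,t_0]$ introduce the functional $\psi_t:=\mathbf{\Psi}-\T(t)^\star\mathbf{\Psi}\in\X^\star$. It is positive, since for $x\in\X_+$ one has $\la\psi_t,x\ra=\|x\|-\|\T(t)x\|\geq 0$ by substochasticity, and it annihilates $u$: $\la\psi_t,u\ra=\|u\|-\|\T(t)u\|=0$. Since $u$ is quasi-interior, every positive functional vanishing at $u$ is zero (this is precisely the characterization of quasi-interiority already used in the paper), so $\psi_t=0$, i.e. $\|\T(t)x\|=\|x\|$ for all $x\in\X_+$ and all $t\in[0,t_0]$.

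To close the argument I would invoke Lemma \ref{24} together with the discussion following it: since $\G\neq\overline{\A+\B}$, the semigroup $\Tt$ is not honest (Theorem \ref{char} and the subsequent remark), hence some trajectory is dishonest, and therefore there exists $z\in\X_+\setminus\{0\}$ with $\|\T(t)z\|<\|z\|$ for every $t>0$; evaluating at $t=t_0/2$ contradicts the conservation of the whole semigroup on $[0,t_0]$ obtained above. Thus no such $t_0$ exists and $\|\T(t)u\|<\|u\|$ for all $t>0$. (Equivalently, from $\|\T(t)x\|=\|x\|$ on $[0,t_0]$ for all $x\in\X_+$ one gets $\la\mathbf{\Psi},\G x\ra=\lim_{t\to0^+}t^{-1}(\|\T(t)x\|-\|x\|)=0$ for $x\in\D(\G)_+$, so $\cc_0\equiv 0$ on $\D(\G)=\D(\G)_+-\D(\G)_+$; since the conservativity hypothesis gives $\overline{\cc}\equiv 0$ on $\D(\G)$ as well, the remark after Corollary \ref{coro:equiva} forces $\D(\G)_+\subset\D(\overline{\A+\B})$, i.e. $\G=\overline{\A+\B}$, the same contradiction.)

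The main obstacle, conceptually, is exactly the step isolated in the second paragraph. A naive approach would try to propagate ``honesty on $[0,t_0]$'' forward in time using the semigroup law, but this fails: $\T(t_0)u$ need not be quasi-interior, and conservation of the trajectory of $u$ up to time $t_0$ does not by itself extend past $t_0$. The trick of testing against the positive functional $\psi_t=\mathbf{\Psi}-\T(t)^\star\mathbf{\Psi}$ and using quasi-interiority of $u$ to annihilate it is what converts a statement about one trajectory into one about the whole semigroup, after which incompatibility with $\G\neq\overline{\A+\B}$ is immediate.
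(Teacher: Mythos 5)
Your proposal is correct and is essentially the paper's own argument run in contrapositive form: both proofs rest on the positive functional $\mathbf{\Psi}-\T^{\star}(t)\mathbf{\Psi}$, on the instantaneously dishonest element $z$ supplied by Lemma \ref{24} (with the conservativity assumption turning $\Delta_z(t)<0$ into $\|\T(t)z\|<\|z\|$), and on the defining property of quasi-interior points. The paper simply states it directly (this functional is nonzero and positive, hence strictly positive at any quasi-interior $u$), whereas you assume $\|\T(t_0)u\|=\|u\|$ and derive the contradiction; the content is the same.
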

{\begin{proof} If $\Tt$ is dishonest, then, according to Lemma
\ref{24}, there exists $z \in \X_+ \setminus \{0\}$ such that
$\|\T(t)z\| < \|z\|$ for all $t >0$. In particular,
$$\la \mathbf{\Psi}, \T(t)z-z\ra <0, \qquad \forall t >0.$$
Define $\mathcal{Z}_t:=\mathbf{\Psi}- \T^\star(t) \mathbf{\Psi} \in
\X^\star,$ for any $t >0$ where $(\T^\star(t))_{t \geq 0}$ is the
dual contractions semigroup of $\Tt$. Since $\la
\mathbf{\Psi},\T(t)u-u\ra \leq 0$ for any $u \in \X_+$,   $
\mathcal{Z}_t$ belongs to the positive cone $\X_+^\star$ of
$\X^\star$ for any $t \geq 0$ while
$$\la \mathcal{Z}_t, z \ra > 0,\qquad \forall t >0.$$
Therefore, $\mathcal{Z}_t$ belongs to $\X^\star_+ \setminus \{0\}$
for any $t >0$. Therefore, for any quasi-interior $u \in \X_+$ one
has
$$\la \mathcal{Z}_t, u \ra > 0,\qquad \forall t >0.$$
This proves the result.
\end{proof}}

\begin{nb} Whenever $\X$ \textbf{is an $AL$-space}, it is possible to prove a more general result of immediate
dishonesty by resuming in a straightforward way the arguments of
\cite[Corollary 2.12]{mkvo}. Precisely, recall  that, if $\X$ is an
$AL$-space, $\mathscr{H}$ is a projection band of $\X$ (see Prop.
\ref{propo:von}) and  let $\mathbf{P}$ be the band projection
onto $\mathscr{H}$. Then, one can prove the
following: let us assume that $\Tt$ is not honest and let $u \in
\X_+$ be such that $v=(I-\mathbf{P})u$ is a quasi-interior element
of the disjoint complement of $\mathscr{H}$. Then, the trajectory
$(\T(t)u)_{t \geq 0}$ is \textbf{immediately dishonest}, i.e.
 $\|\T(t)u\| < \|u\| - \overline{\cc} (\int_0^t \T(s)u \d s )$ for any $ t >0.$ However, from the technical point of view, the formal arguments need the use of the concept of local honesty as in \cite{mkvo}.\end{nb}

\section{On honesty theory:  Dyson-Phillips
approach}
We establish here an alternative of concept of honesty of the
trajectory in terms of  the Dyson-Phillips iterated   defined by
\eqref{dysonphi2}. To do so, we have first to investigate several fine properties of these iteration terms.
\subsection{Fine properties of the Dyson-Phillips iterations} The various terms of the Dyson-Phillips series \eqref{dysonphi2} enjoy  the following properties:
\begin{propo}\label{luisa2} For any $n \in \mathbb{N},$ $n \geq 1$, the Dyson-Phillips
iterated defined in \eqref{dysonphi2} satisfy:
\begin{enumerate}
\item For any $u \in \D(\A)$, the mapping $t \in (0,\infty) \longmapsto \T_n(t)u$ is
continuously differentiable with
$$\dfrac{\d}{\d t}\T_n(t)u=\T_n(t)\A u +
\T_{n-1}(t) \B u.$$
\item For any $u \in \D(\A)$,  $\T_n(t)u \in \D(\A)$, the mapping $t \in (0,\infty) \longmapsto \A\T_n(t)u$ is continuous and
$$\A\T_n(t)u=\T_n(t)\A u +\T_{n-1}(t)\B u-\B \T_{n-1}(t) u$$
\item For any $u \in \X$ and any $t \geq 0$, $\ds\int_0^t \T_n(s)u \d s
\in \D(\A)$, the mapping $t \in (0,\infty) \longmapsto \A \int_0^t
\T_n(s)u \d s$ is continuous with \begin{equation}\label{form3}\A
\int_0^t  \T_n(s)u \d s= \T_n(t)u- \B\int_0^t \T_{n-1}(s)u
\d s.\end{equation}
\item For any $u \in \X_+$ and any $t \geq 0$,
\begin{equation}\label{form4}\la \mathbf{\Psi},\B\int_0^t \T_n(s)u\d s \ra \leq -\la
\mathbf{\Psi}, \T_n(t)u\ra+\la \mathbf{\Psi}, \B\int_0^t
\T_{n-1}(s)u \d s \ra.\end{equation}
\item For any $u \in \X$,  and $\l >0$, the limit
$$\lim_{t \to \infty} \int_0^t \exp(-\l s)\T_n(s)u\d s=:\int_0^\infty
\exp(-\l s)\T_n(s)u\d s$$ exists in the graph norm of $\A$ and
\begin{equation}\label{mus4}
\left(\l-\A\right) \int_0^\infty \exp(-\l s)\T_n(s)u\d s=  \B \int_0^\infty \exp(-\l s)\T_{n-1}(s)u
\d s.
\end{equation}
\end{enumerate}
\end{propo}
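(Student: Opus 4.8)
The plan is to prove the five assertions by induction on $n\geq 1$, using the base case $n=1$ where $\T_1(t)u=\int_0^t\T(t-s)\B\u(s)u\,\d s=\mathscr{L}(\u)(t)u$ and the known properties of $\u$, $\T$ and the Duhamel formula \eqref{duh} already established in Theorem \ref{rhandi}. The key observation is that the recursion $\T_{n+1}(t)u=\int_0^t\T_n(t-s)\B\u(s)u\,\d s$ is of exactly the same structural form as the formula defining $\T_1$ from $\T$ (with $\T_n$ playing the role of $\T$), so each inductive step amounts to a differentiation-under-the-integral argument for the convolution $t\mapsto\int_0^t\T_n(t-s)\B\u(s)u\,\d s$ with $u\in\D(\A)$. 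First I would establish \textit{(1)}: writing $\T_{n+1}(t)u=\int_0^t\T_n(s)\B\u(t-s)u\,\d s$ after the change of variables $s\mapsto t-s$, one differentiates, using that $s\mapsto\B\u(s)u$ is continuous (since $\B$ is $\A$-bounded) and that $\frac{\d}{\d t}\u(t-s)u=\u(t-s)\A u$; the boundary term at $s=t$ contributes $\T_n(t)\B u$ and the interior term contributes $\int_0^t\T_n(s)\B\u(t-s)\A u\,\d s=\T_{n+1}(t)\A u$, giving $\frac{\d}{\d t}\T_{n+1}(t)u=\T_{n+1}(t)\A u+\T_n(t)\B u$.

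Next, for \textit{(2)} I would show $\T_n(t)u\in\D(\A)$ by applying the resolvent: from \eqref{mus4} (once proved, or directly from the Laplace transform identity \eqref{Ln}) together with closedness of $\A$, or more directly by noting that $\int_0^t\T_n(t-s)\B\u(s)u\,\d s$ lies in $\D(\A)$ because each $\u(s)u\in\D(\A)$ and $\T_n$ maps into $\D(\A)$ by the inductive hypothesis; then one computes $\A\T_{n+1}(t)u$ by combining \textit{(1)} with the product-rule-type identity $\frac{\d}{\d t}\T_{n+1}(t)u=\A\T_{n+1}(t)u+\B\T_n(t)u$ (which itself follows from differentiating the defining convolution the \emph{other} way, peeling off the boundary term at $s=0$ which gives $\T_{n+1}$ applied to nothing and an interior term $\int_0^t\frac{\d}{\d t}\T_n(t-s)\cdot\B\u(s)u\,\d s$ handled by the inductive form of \textit{(1)} for $\T_n$). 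Subtracting the two expressions for $\frac{\d}{\d t}\T_{n+1}(t)u$ yields $\A\T_{n+1}(t)u=\T_{n+1}(t)\A u+\T_n(t)\B u-\B\T_n(t)u$. Continuity of $t\mapsto\A\T_n(t)u$ then follows from continuity of the three terms on the right.

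For \textit{(3)} I would integrate \textit{(2)} in time, using that $\int_0^t$ commutes with the closed operator $\A$ on continuous integrands: $\A\int_0^t\T_n(s)u\,\d s=\int_0^t\A\T_n(s)u\,\d s$, and the right-hand side telescopes via $\int_0^t[\T_n(s)\A u-\B\T_{n-1}(s)u+\T_{n-1}(s)\B u]\,\d s$; recognizing $\int_0^t\T_{n-1}(s)\B u\,\d s$ as part of the next Dyson iterate and $\int_0^t\T_n(s)\A u\,\d s=\T_n(t)u-\T_n(0)u-\int_0^t\T_{n-1}(s)\B u\,\d s$ (from \textit{(1)} integrated, with $\T_n(0)=0$ for $n\geq1$) gives \eqref{form3}; a density argument extends it from $u\in\D(\A)$ to all $u\in\X$ using the bound \eqref{luisanotes1} and boundedness of $\mathscr{L}(\cS)(t)$ from the Lemma. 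Assertion \textit{(4)} is then immediate: apply $\mathbf{\Psi}$ to \eqref{form3}, use $\la\mathbf{\Psi},\A w\ra\leq0$ for $w\in\D(\A)_+$ (valid since $\int_0^t\T_n(s)u\,\d s\in\D(\A)_+$ when $u\in\X_+$), and rearrange. Finally \textit{(5)} follows by taking Laplace transforms: from \eqref{Ln} the integral $\int_0^\infty e^{-\l s}\T_n(s)u\,\d s=(\l-\A)^{-1}[\B(\l-\A)^{-1}]^n u\in\D(\A)$, and applying $(\l-\A)$ gives $[\B(\l-\A)^{-1}]^n u=\B(\l-\A)^{-1}[\B(\l-\A)^{-1}]^{n-1}u=\B\int_0^\infty e^{-\l s}\T_{n-1}(s)u\,\d s$, which is \eqref{mus4}; convergence in the graph norm of $\A$ follows because both $\int_0^t e^{-\l s}\T_n(s)u\,\d s$ and its image under $\A$ (computed via \eqref{form3} with the extra exponential weight) converge as $t\to\infty$, using \eqref{luisanotes1} to control the tails.

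The main obstacle I anticipate is the careful justification of differentiation under the integral sign and the interchange of $\A$ with the time integral: these require that $t\mapsto\B\u(t)u$ be continuous (true only for $u\in\D(\A)$, which is why assertions \textit{(1)}--\textit{(2)} are stated on $\D(\A)$) and that the convolution $\int_0^t\T_n(t-s)\B\u(s)u\,\d s$ inherit differentiability from $\T_n$ — this is where the inductive structure is essential, and one must be attentive to which variable is differentiated and whether the boundary term lands in $\D(\A)$. The extension to general $u\in\X$ in \textit{(3)}--\textit{(5)} is routine once the a priori bounds $\sum_{k=0}^n\|\T_k(t)u\|\leq\|u\|$ and the operator-norm bound $\|\mathscr{L}(\cS)(t)\|\leq 2M\|\cS\|_\infty$ from the preliminary Lemma are invoked, but the closedness of $\A$ must be used to pass the limit inside.
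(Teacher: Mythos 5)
Your overall skeleton (induction on $n$, Duhamel/convolution structure, deducing \textit{(4)} from \textit{(3)} and \eqref{hypprinc1}, and \textit{(5)} from \eqref{Ln} plus $\A$-boundedness of $\B$) matches the paper for items \textit{(4)} and \textit{(5)}, but the way you propose to get \textit{(1)}--\textit{(3)} has a genuine gap, most seriously in \textit{(2)}. Your ``more direct'' argument that $\int_0^t\T_n(t-s)\B\u(s)u\,\d s\in\D(\A)$ misuses the inductive hypothesis: assertion \textit{(2)} at rank $n$ only says $\T_n(\tau)v\in\D(\A)$ for $v\in\D(\A)$, whereas the vector $\B\u(s)u$ is merely in $\X$ (there is no reason $\B$ should map into $\D(\A)$), and even pointwise membership of the integrand in $\D(\A)$ would not by itself put the integral in $\D(\A)$. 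Your fallback, differentiating the convolution ``the other way'' to get $\frac{\d}{\d t}\T_{n+1}(t)u=\A\T_{n+1}(t)u+\B\T_n(t)u$, runs into the same obstruction: the interior term $\int_0^t\frac{\d}{\d t}\T_n(t-s)\B\u(s)u\,\d s$ would require applying \textit{(1)} for $\T_n$ to $\B\u(s)u\notin\D(\A)$. Since this identity is exactly what encodes $\T_{n+1}(t)u\in\D(\A)$, item \textit{(2)} is not established; nor can pointwise-in-$t$ membership be read off from the Laplace-transform identity \eqref{Ln}, which only concerns $\int_0^\infty e^{-\l s}\T_n(s)u\,\d s$. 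A smaller but real gap occurs already in \textit{(1)}: differentiating under the integral in $\int_0^t\T_n(s)\B\u(t-s)u\,\d s$ needs $t\mapsto\B\u(t-s)u$ to be differentiable, and since $\B$ is only $\A$-bounded this requires the difference quotients of $\u(\cdot)u$ to converge in the graph norm, i.e. $u\in\D(\A^2)$; your statement for all $u\in\D(\A)$ then needs an explicit graph-norm density step (using the uniform bounds on the $\T_k$), which is fixable but missing. Consequently \textit{(3)}, which you derive by integrating \textit{(2)}, inherits the problem (and its extension from $\D(\A)$ to $\X$ also silently uses that $u\mapsto\B\int_0^t\T_{n-1}(s)u\,\d s$ is bounded, itself an inductive fact).

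The missing idea, which is how the paper proceeds, is to avoid differentiating under the integral sign altogether and instead exploit the algebraic identity \eqref{luisanotes2}, $\T_n(t+s)u=\sum_{k=0}^n\T_k(t)\T_{n-k}(s)u$, together with the elementary limits $\lim_{h\to0^+}h^{-1}\T_1(h)v=\B v$ and $\lim_{h\to0^+}h^{-1}\T_k(h)v=0$ for $k\geq2$, $v\in\D(\A)$ (Eq. \eqref{t1j}). For \textit{(1)} one writes the difference quotient of $\T_n(\cdot)u$ via \eqref{luisanotes2}; for \textit{(2)} one writes $\u(h)\T_{n+1}(t)u-\T_{n+1}(t)u=\bigl(\T_{n+1}(t+h)u-\T_{n+1}(t)u\bigr)-\sum_{k=1}^{n+1}\T_k(h)\T_{n+1-k}(t)u$, so the inductive hypotheses are only ever applied to the vectors $\T_{n+1-k}(t)u\in\D(\A)$, and the existence of the limit of $h^{-1}(\u(h)-I)\T_{n+1}(t)u$ is by definition the statement $\T_{n+1}(t)u\in\D(\A)$, with the limit identifying $\A\T_{n+1}(t)u$. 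The same device applied to $\int_0^t\T_{n+1}(s)u\,\d s$ yields \textit{(3)} and \eqref{form3} directly for every $u\in\X$, with no density argument. If you repair \textit{(1)}--\textit{(3)} along these lines (or by carrying out carefully the $\D(\A^2)$-plus-density route for \textit{(1)} and finding an independent argument for \textit{(2)}), your treatment of \textit{(4)} and \textit{(5)} goes through essentially as in the paper.
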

\begin{nb} Notice that, in Eq. \eqref{mus4}, $\B \int_0^\infty \exp(-\l s)\T_{n-1}(s)u
\d s$ is well-defined since $\B$ is $\A$-bounded and the integral converges in the graph norm of $\A$. Moreover, it is easily deduced from \eqref{mus4} that
\begin{equation}\label{blan}
\B\int_0^\infty \exp(-\l s) \T_n(s)u\d s =
\left[\B(\l-\A)^{-1}\right]^{n+1} u, \qquad \forall u \in \X,
\:n \geq 1.\end{equation}
\end{nb}
\begin{proof}  We first recall that
the formula \eqref{dysonphi2} reads on $\D(\A)$ as:
$$\T_{n+1}(t)u=\int_0^t \T_n(t-s)\B\u(s)u\d s, \qquad
\forall u \in \D(\A), \qquad t \geq 0,  \qquad n \in \mathbb{N}.$$
Then
$$h^{-1}\T_{n+1}(h)u=h^{-1}\int_0^h \T_n (h-s)\B \u(s)u
\longrightarrow \T_n(0)\B\u(0)u \qquad \text{ as } \quad h
\to 0^+$$ because the mapping $(s,h) \mapsto \T_n (h-s)\B \u(s)u$ is strongly continuous on $\{(s,h) \in \mathbb{R}_+ \times
\mathbb{R}_+\,;\,0 \leq s \leq h\}.$  Since $\T_{0}(0)
=\u_0(0)=\mathrm{Id}$ while $\T_n(0)=0$ for any $n \geq 1$, we see
that
\begin{equation}\label{t1j} \lim_{h \to 0} h^{-1}\T_{n+1}(h)u=\begin{cases} \B
u \qquad \qquad  &\text{ when } n=0\\
 0 \qquad  \qquad  &\text{ when } n \geq 1, \qquad \forall u \in
\D(\A).\end{cases}\end{equation}

\textit{(1)} Let $n \geq 1$ be fixed. Let $u \in \D(\A)$ and  $t,h
>0$ be fixed. One deduces from
\eqref{luisanotes2} that, given $t \geq 0$ and $h >0$,
\begin{equation*}\label{0.6}\begin{split}
\T_{n}(t+h)u-\T_{n}(t)u&=\sum_{k=0}^{n-1}\T_k(t)\T_{n-k}(h)u +
\T_{n}(t)(\T_0(h)u-u)\\
&=\sum_{k=1}^{n}\T_{n-k}(t)\T_{k}(h)u+ \T_{n}(t)(\T_0(h)u-u).
\end{split}
\end{equation*} Thus,
$$h^{-1}(\T_{n}(t+h)u-\T_{n}(t)u)=\sum_{k=1}^{n}\T_{n-k}(t)\left(\frac{1}{h}\T_k(h)u\right)
+\T_{n}(t)\frac{\T_0(h)u-u}{h} $$ which yields, since $u \in
\D(\A)$,
$$\lim_{h \to 0^+}\frac{\T_{n}(t+h)u-\T_{n}(t)u}{h}=\T_{n-1}(t)\B
u+\T_{n}(t)\A u.$$ Similarly, it is easy to prove
 that, for any $t >0$ and any $0 < h < t$,
\begin{equation}\label{0.7}
\T_{n}(t)u-\T_{n}(t-h)u=\sum_{k=1}^{n}\T_{n-k}(t-h)\T_k(h)u+
\T_{n}(t-h)(\T_0(h)u-u)\end{equation} and therefore
$$\lim_{h \to 0^+}\frac{\T_{n}(t)u-\T_{n}(t-h)u}{h}=\T_{n-1}(t)\B
u+\T_{n}(t)\A u.$$ Since, for any $u \in \D(\A)$, the mapping $t
\mapsto \T_{n-1}(t)\B u+\T_{n}(t)\A u$ is continuous (see Remark \ref{remark24}), property
\textit{(1)} holds true.\\

\textit{(2)}  Let $u \in \D(\A)$. It is clear that the two properties
$$\T_k(t)u \in \D(\A) \qquad \text{ and } \quad t \geq 0 \mapsto \A\T_k(t)u \text{ is continuous}$$
hold true for $k=0$. Let $n \geq 1$ be fixed and assume the above properties hold true for any $k\leq n$ and prove they still hold for $k=n+1$. For any $t,h >0$, Eq.
\eqref{luisanotes2} yields
$$\T_{n+1}(t+h)u=\T_{n+1}(h+t)u=\sum_{k=0}^{n+1}\T_k(h)\T_{n+1-k}(t)u$$
so that
\begin{equation}\label{0.9}
\T_0(h)\T_{n+1}(t)u-\T_{n+1}(t)u=(\T_{n+1}(t+h)u-\T_{n+1}(t)u)-\sum_{k=1}^{n+1}\T_k(h)\T_{n+1-k}(t)u.\end{equation}
Assume now $u \in \D(\A)$, by virtue of point \textit{(1)} and
\eqref{t1j}, we have
$$\lim_{h \to
0^+}\dfrac{\T_0(h)\T_{n+1}(t)u-\T_{n+1}(t)u}{h}=\T_{n}(t)\B
u+\T_{n+1}(t)\A u- \B \T_{n}(t)u.$$  This shows that
$\T_{n+1}(t)u \in \D(\A)$ for any $u \in \D(\A)$ with
$$\A \T_{n+1}(t)u = \T_{n+1}(t)\A u+\T_{n}(t)\B u -\B \T_{n}(t)u$$
and proves \textit{(2)} since the continuity of the mapping $t \geq 0 \mapsto \A \T_{n+1}(t)u$ is easy to prove.\\

\textit{(3)} The first part of point \textit{(3)} clearly holds for $n=0$. Let $u \in \X$ and $n \in \mathbb{N}$ be fixed. Assume that, for any $t \geq 0$ and any $k \leq n$,  $\ds\int_0^t \T_k(s)u \d s
\in \D(\A)$, the mapping $t \in (0,\infty) \longmapsto \A \int_0^t
\T_k(s)u \d s$ is continuous. Let us prove the result for $k=n+1.$ Let $t,h >0$. From \eqref{0.9} we have
\begin{equation*}\begin{split}
(\T_0(h)-\mathrm{Id})&\int_0^t \T_{n+1}(s)u \d s=\int_0^t
(\T_0(h)\T_{n+1}(s)u-\T_{n+1}(s)u)\d s\\
&=\int_0^t (\T_{n+1}(s+h)u-\T_{n+1}(s)u)\d s
-\sum_{k=1}^{n+1}\T_k(h)\int_0^t \T_{n+1-k}(s)u \d s\\
&=\int_{t}^{t+h}\T_{n+1}(r)u\d r -\int_0^h \T_{n+1}(r)u \d r
-\sum_{k=1}^{n+1}\T_k(h)\int_0^t \T_{n+1-k}(s)u \d s.\end{split}
\end{equation*}
Since we assumed that $\int_0^t \T_j(s)u \d s \in \D(\A) \subset
\D(\B)$ for any $0 \leq j \leq n$, we deduce immediately
\begin{equation*}
\lim_{h \to 0^+}h^{-1}(\T_0(h)-\mathrm{Id})\int_0^t \T_{n+1}(s)u \d
s=\T_{n+1}(t)u -\B\int_0^t \T_n(s)u \d s
\end{equation*}
where we used \eqref{t1j} and the fact that
$h^{-1}\int_t^{t+h}\T_{n+1}u \d s \to \T_{n+1}(t)u$ as $h \to 0^+$.
Therefore, property \textit{(3)} holds true for $n+1.$\\

\textit{(4)} Let $u \in \X$ and $t \geq 0$ be fixed. Applying
\eqref{hypprinc1} to $v=\ds \int_0^t \T_n(s)u \d s$ (which belongs to
$\D(\A)$ from point \textit{(3)}), one deduces easily \eqref{form4}
from \eqref{form3}. \\

\textit{(5)} It is clear that the definition of $\T_n(t)$ given in \eqref{dysonphi2} is equivalent to
$$\exp(-\l t) \T_{n+1}(t)u=\int_0^t \exp\left(-\l(t-s)\right)\T_n(t-s) {\B}\left[\exp\left(-\l s\right)\u(s)\right]u\d s$$
 for any $u \in \D(\A)$, $n \in
\mathbb{N}$ and any $\l >0.$ Moreover, for any $\l >0$, the
operators $\A_\l:=\A-\l$ (with domain $\D(\A)$) and $\B$ satisfy the
assumptions of Theorems \ref{kato} since
$$\la \mathbf{\Psi}, (\A_\l + \B)u \ra \leq -\l \la \mathbf{\Psi}, u
\ra \leq 0, \qquad \forall u \in \D(\A)_+,\:\l >0.$$ One sees then that
there is an extension of $\A_\l+\B$ that generates a \com
$\left(\T_\l(t)\right)_{t \geq 0}$ in $\X$. Clearly, the family
$\left(\exp(-\l t)\T_n(t)\right)_{n \in \mathbb{N}}$ is the family
of Dyson-Phillips iterated associated to $\A_\l$, $\B$ and
$\T_\l(t)$. In particular, applying Formula \eqref{form3} to
$\A_\l$, $\B$ and $\left(\exp(-\l t)\T_n(t)\right)_{n \in
\mathbb{N}}$, one gets
\begin{multline}\label{mus3}
\A\int_0^t \exp(-\l s)\T_n(s)u\d s=\exp(-\l t)\T_n(t)u + \l \int_0^t
\exp(-\l s) \T_n(s)u \d s  \\
-\B \int_0^t \exp(-\l s) \T_{n-1}(s)u \d s, \quad \forall
\l >0, \quad \forall u \in \X, \;n \geq 1.\end{multline}
Notice that, since for any $n \in \mathbb{N}$,
$\T_n(t)=\mathscr{L}^n(\u)(t)$, we
already saw in the proof of Theorem \ref{rhandi} that, for any $u
\in \X$ and any $\l
>0$, the limit $$\lim_{t \to \infty} \int_0^t \exp(-\l s) \T_{n}(s)u\d
s$$ exists in $\X$ and \begin{equation}\label{LnTn} \int_0^\infty
\exp(-\l s)\T_n(s)u\d
s=(\l-\A)^{-1}\left[\B(\l-\A)^{-1}\right]^nu, \qquad
\forall n \in \mathbb{N}.\end{equation} Now, for $n=0$, since
$$\A\int_0^t \exp(-\l s)\u(s)u \d s=\exp(-\l t)\u(t)u-u+\l \int_0^t
\exp(-\l s) \u(s)u \d s$$ one easily sees that the limit $\lim_{t
\to \infty} \A\int_0^t \exp(-\l s) \u(s)u\d s$ exists in $\X$ with
$$\lim_{t \to \infty} \A \int_0^t \exp(-\l s)\u(s)u \d s=-u +\l
\int_0^\infty \exp(-\l s)\u(s)u\d s,$$ i.e. $\ds\int_0^\infty
\exp(-\l s) \u(s)u\d s$ converges in the graph norm of $\A$. Since
$\B$ is $\A$-bounded, the limit
$$\lim_{t \to
\infty}\B\int_0^t \exp(-\l s)\u(s)u \d
s=\B\int_0^\infty \exp(-\l s) \u(s)u \d
s=\B(\l-\A)^{-1}u$$ exists in $\X$. Now, applying
\eqref{mus3} to  $n=1$, the integral $\ds \int_0^\infty \exp(-\l
s)\T_1(s)u\d s$ converges in the graph norm of $\A$ with
$$\A\int_0^\infty \exp(-\l s) \T_1(s)u\d s=\l \int_0^\infty \exp(-\l
s)\T_1(s)u\d s-\B\int_0^\infty \exp(-\l s)\u(s)u \d s$$
and, as above, since $\B$ is $\A$-bounded,
$$\lim_{t \to \infty}\B\int_0^t \exp(-\l s)\T_1(s)u\d
s=\B\int_0^\infty \exp(-\l s)\T_1(s)u \d s$$ converges in
$\X$. A simple induction leads to the result for any $n \in
\mathbb{N}$.
\end{proof}
\begin{nb} Note that $\mathcal{A}$ is closed but a priori $\mathcal{A+B}$ is not;
however for $u\in \mathcal{D}(\mathcal{A})$
\begin{equation}\label{mustrick2}\begin{split}
(\A+\B )\int_{0}^{t}\T_{k}(r)u\d r &=(\overline{\A+\B}%
)\int_{0}^{t}\T_{k}(r)u\d r=\int_{0}^{t}(\overline{\A+\B})\T_{k}(r)u\d r \\
&= \int_{0}^{t}(\overline{\A+\B})\T_{k}(r)u\d r=\int_{0}^{t}(%
\A+\B )\T_{k}(r)u\d r \\
&= \int_{0}^{t}(\A+\B)\T_{k}(r)u\d r=\int_{0}^{t}\A\T
_{k}(r)u\d r+\int_{0}^{t}\B \T_{k}(r)u\d r;\end{split}\end{equation}
in particular%
\[
\B\int_{0}^{t}\T_{k}(r)u\d r=\int_{0}^{t}\B\T_{k}(r)u\d r.\]
\end{nb}
From the above Proposition, $\lim_{t \to \infty} \B\int_t^\infty
\exp(-\l s)\T_n(s)u\d s$ converges to zero for any $n \in
\mathbb{N}$ and any $u \in \X_+$. Actually, this convergence is
uniform with respect to $n$:
\begin{propo}\label{lem:mus3} For any $\l >0$ and any $u \in \X$, one has
$$\lim_{t \to \infty} \sup_{n \in \mathbb{N}}
\left\|{\B}\int_t^\infty \exp(-\l s)\T_n(s)u\d s
\right\|=0.$$
\end{propo}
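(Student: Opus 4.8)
The plan is to reduce to initial data in the cone and then, for fixed $\l>0$ and $u\in\X_+$, to set $w_n^{(t)}:=\int_t^\infty \exp(-\l s)\T_n(s)u\,\d s$ and derive a recursion in $n$ for $\B w_n^{(t)}$ from which a bound uniform in $n$ is read off by positivity.

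\emph{Step 1 (reduction to the cone).} Every $u\in\X$ splits as $u=u_1-u_2$ with $u_i\in\X_+$ and $\|u_i\|\leq M\|u\|$, and $u\mapsto \B\int_t^\infty\exp(-\l s)\T_n(s)u\,\d s$ is linear; hence it suffices to prove the statement for $u\in\X_+$, at the cost of an innocuous factor $2M$.

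\emph{Step 2 (the recursion).} Fix $u\in\X_+$. By Proposition \ref{luisa2}(5), both $\int_0^\infty\exp(-\l s)\T_n(s)u\,\d s$ and $\int_0^t\exp(-\l s)\T_n(s)u\,\d s$ belong to $\D(\A)$ (the first with convergence in the graph norm of $\A$), so $w_n^{(t)}\in\D(\A)$, and $w_n^{(t)}\in\X_+$ because the integrand is positive. Combining \eqref{mus4} with the rearranged form of \eqref{mus3}, namely $(\l-\A)\int_0^t\exp(-\l s)\T_n(s)u\,\d s=-\exp(-\l t)\T_n(t)u+\B\int_0^t\exp(-\l s)\T_{n-1}(s)u\,\d s$, one obtains for every $n\geq1$
\[(\l-\A)\,w_n^{(t)}=\B\,w_{n-1}^{(t)}+\exp(-\l t)\,\T_n(t)u,\]
while the semigroup property of $\ut$ together with the Laplace representation of $(\l-\A)^{-1}$ gives the base case $w_0^{(t)}=\exp(-\l t)(\l-\A)^{-1}\u(t)u$. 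Applying $\B(\l-\A)^{-1}$ to these identities (legitimate because the right-hand sides lie in $\X$) and writing $\J(\l)=\B(\l-\A)^{-1}$, we get
\[\B\,w_n^{(t)}=\J(\l)\,\B\,w_{n-1}^{(t)}+\exp(-\l t)\,\J(\l)\T_n(t)u,\qquad \B\,w_0^{(t)}=\exp(-\l t)\,\J(\l)\u(t)u.\]

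\emph{Step 3 (uniform estimate and conclusion).} For $u\in\X_+$ every term above lies in $\X_+$, so $\|\B w_n^{(t)}\|=\langle\mathbf{\Psi},\B w_n^{(t)}\rangle$. Recalling from the proof of Theorem \ref{kato} that $\langle\mathbf{\Psi},\J(\l)v\rangle=\|\J(\l)v\|\leq\|v\|$ for $v\in\X_+$, an immediate induction on $n$ yields
\[\|\B\,w_n^{(t)}\|\leq \exp(-\l t)\sum_{k=0}^n\|\T_k(t)u\|\leq \exp(-\l t)\,\|u\|,\]
the last step being \eqref{luisanotes1}. Since this bound is independent of $n$, we get $\sup_{n\in\mathbb{N}}\|\B w_n^{(t)}\|\leq \exp(-\l t)\|u\|\to0$ as $t\to\infty$, and Step 1 extends this to arbitrary $u\in\X$.

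\emph{Where the difficulty lies.} The positivity computation in Step 3 is painless; the only point requiring care is Step 2 — one must invoke the graph-norm convergences of Proposition \ref{luisa2}(5) to justify subtracting $\int_0^t$ from $\int_0^\infty$ inside the unbounded operator $\B$, and treat $n=0$ separately since \eqref{mus3}--\eqref{mus4} are stated for $n\geq1$ (this is where the translation identity $w_0^{(t)}=\exp(-\l t)(\l-\A)^{-1}\u(t)u$ is needed).
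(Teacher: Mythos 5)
Your proposal is correct and takes essentially the same route as the paper: both arguments combine \eqref{mus3} and \eqref{mus4} into an induction on $n$ for the tail integrals $\int_t^\infty e^{-\l s}\T_n(s)u\,\d s$, estimated on the positive cone. The only real difference is that you invert $(\l-\A)$ and exploit the contractivity of $\J(\l)=\B(\l-\A)^{-1}$ on $\X_+$, whereas the paper pairs the identity with $\mathbf{\Psi}$ and invokes \eqref{hypprinc1} directly; your variant has the minor bonus of the explicit uniform bound $e^{-\l t}\|u\|$ (via \eqref{luisanotes1}), which also absorbs the $n=0$ tail that the paper estimates separately through Proposition \ref{luisa2}.
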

\begin{proof} The combination of \eqref{mus3} and \eqref{mus4} gives%
\begin{eqnarray*}
\A\int_{t}^{ \infty }\exp (-\lambda s)\T_{n}(s)u\d s
&=&-e^{-\lambda t}\T_{n}(t)u+\lambda \int_{t}^{ \infty }\exp
(-\lambda s)\T_{n}(s)u\d s \\
&& \phantom{++++}-\mathcal{B}\int_{t}^{\infty }\exp (-\lambda s)\T_{n-1}(s)u\d s
\end{eqnarray*}%
so that
\begin{multline*}
\la \mathbf{\Psi} ,\A\int_{t}^{ \infty }\exp (-\lambda s)\T
_{n}(s) u\ d s\ra  =\la  \mathbf{\Psi} ,\lambda \int_{t}^{ \infty }\exp (-\lambda s)\T
_{n}(s)u \d s\ra \\-\la  \mathbf{\Psi} ,e^{-\lambda t}\T_{n}(t)u\ra
-\la  \mathbf{\Psi} ,\mathcal{B}\int_{t}^{ \infty }\exp (-\lambda s)\T
_{n-1}(s)u\d s\ra .
\end{multline*}
Since $\int_{t}^{ \infty }\exp (-\lambda s)\T_{n}(s)u \d s\in \D(%
\A)_{+}$   for $u \in \X_{+}$ then by \eqref{hypprinc1}
$$
\la  \mathbf{\Psi} ,\A\int_{t}^{\infty }\exp (-\lambda s)\T
_{n}(s)u \d s\ra  \leq -\la  \mathbf{\Psi} ,\mathcal{B}\int_{t}^{ \infty }\exp
(-\lambda s)\T_{n}(s)u \d s\ra
$$
whence
\begin{eqnarray*}
&&\la  \mathbf{\Psi} ,\mathcal{B}\int_{t}^{ \infty }\exp (-\lambda s)\T
_{n}(s)u \d s\ra  +\la  \mathbf{\Psi} ,\lambda \int_{t}^{ \infty }\exp (-\lambda s)%
\T_{n}(s)u \d s\ra  \\
&\leq &\la  \mathbf{\Psi} ,e^{-\lambda t}\T_{n}(t)u \ra  +\la \mathbf{\Psi} ,%
\mathcal{B}\int_{t}^{ \infty }\exp (-\lambda s)\T
_{n-1}(s)u \d s\ra.
\end{eqnarray*}%
In particular for all $n$%
\[
\la \mathbf{ \Psi} ,\mathcal{B}\int_{t}^{ \infty }\exp (-\lambda s)\T%
_{n}(s)u\d s\ra  \leq \la \mathbf{ \Psi} ,e^{-\lambda t}\T%
_{n}(t)u\ra  +\la \mathbf{ \Psi} ,\mathcal{B}\int_{t}^{ \infty }\exp (-\lambda
s)\T_{n-1}(s)u \d s\ra
\]%
and it follows by induction that%
\begin{multline*}
\la \mathbf{ \Psi} ,\mathcal{B}\int_{t}^{ \infty }\exp (-\lambda s)\T
_{n}(s)u \d s\ra  \leq \sum_{j=1}^{n}\la  \mathbf{ \Psi} ,e^{-\lambda t}\T
_{j}(t)u\ra  +\\
\la \mathbf{ \Psi} ,\mathcal{B}\int_{t}^{ \infty }\exp (-\lambda
s)\T_{0}(s)u\d s\ra  \\
\leq \la \mathbf{\Psi} ,e^{-\lambda t}\T(t)u\ra  +\la \mathbf{ \Psi} ,%
\mathcal{B}\int_{t}^{ \infty }\exp (-\lambda s)\T_{0}(s)u\d s\ra
\end{multline*}
and then%
\[
\left\Vert \mathcal{B}\int_{t}^{ \infty }\exp (-\lambda s)\T%
_{n}(s)u \d s\right\Vert \leq e^{-\lambda t}\left\Vert u\right\Vert +\left\Vert
\mathcal{B}\int_{t}^{ \infty }\exp (-\lambda s)\T
_{0}(s)u \d s\right\Vert
\]%
which ends the proof since $\X =\X_{+}-\X_{+}$. \end{proof}
\subsection{A new functional} While, in Section 3, we introduced a functional $\overline{\cc}$ related to $\cc$ through the resolvent $(\l-\A)^{-1}$, we introduce here a new functional $\widehat{\cc}$ constructed through the  Dyson-Phillips iteration terms:
\begin{propo}\label{hatccdef} Under the assumption of Theorem
\ref{kato},  for any $v \in \D(\G)$, there exists
\begin{equation}\label{hatcc}\lim_{t \to 0^+}\dfrac{1}{t}\sum_{n=0}^\infty \cc\left(\int_0^t \T_n(s)v\d s\right)=:\widehat{\cc}(v)\end{equation}
with $|\widehat{\cc}(v)| \leq 4M\left(\|v\|+\|\G v\|\right)$. Furthermore, for $v \in \D(\G)_+$, $\widehat{\cc}(v) \leq \cc_0(v) \leq \|\G v\|.$
\end{propo}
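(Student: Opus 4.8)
The plan is to make everything flow from a single telescoping identity obtained by pairing \eqref{form3} with $\mathbf{\Psi}$. First note that $\int_0^t\T_n(s)v\,\d s\in\D(\A)$ by Proposition \ref{luisa2}, so each $\cc(\int_0^t\T_n(s)v\,\d s)$ is well defined; moreover, for $n\ge 1$,
\[
(\A+\B)\int_0^t\T_n(s)v\,\d s=\T_n(t)v-\B\int_0^t\T_{n-1}(s)v\,\d s+\B\int_0^t\T_n(s)v\,\d s
\]
by \eqref{form3}, while for $n=0$ the semigroup identity $\A\int_0^t\u(s)v\,\d s=\u(t)v-v$ gives $(\A+\B)\int_0^t\u(s)v\,\d s=\u(t)v-v+\B\int_0^t\u(s)v\,\d s$. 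Applying $-\langle\mathbf{\Psi},\cdot\rangle$ and summing over $n=0,\dots,N$, the $\B$-integrals telescope and one is left with
\[
\sum_{n=0}^N\cc\Big(\int_0^t\T_n(s)v\,\d s\Big)=\Big\langle\mathbf{\Psi},\,v-\sum_{n=0}^N\T_n(t)v\Big\rangle-\Big\langle\mathbf{\Psi},\,\B\int_0^t\T_N(s)v\,\d s\Big\rangle .
\]

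Next I would establish convergence of the series and identify it. For $v\in\X_+$ every summand on the left is $\ge 0$ by \eqref{hypprinc1}, and \eqref{form4} shows $\beta_N(t):=\langle\mathbf{\Psi},\B\int_0^t\T_N(s)v\,\d s\rangle$ is nonnegative and nonincreasing in $N$; hence $\beta_N(t)\downarrow b_\infty(t)\ge0$ and, using the norm-convergence $\sum_{n\le N}\T_n(t)v\to\T(t)v$ from Theorem \ref{rhandi}, the left-hand side converges and
\[
\sum_{n=0}^\infty\cc\Big(\int_0^t\T_n(s)v\,\d s\Big)=\cc_0\Big(\int_0^t\T(s)v\,\d s\Big)-b_\infty(t),\qquad 0\le b_\infty(t)\le\cc_0\Big(\int_0^t\T(s)v\,\d s\Big),
\]
since $-\langle\mathbf{\Psi},\T(t)v-v\rangle=-\langle\mathbf{\Psi},\G\int_0^t\T(s)v\,\d s\rangle=\cc_0(\int_0^t\T(s)v\,\d s)$. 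Because $(1-\G)^{-1}$ is positive, $\D(\G)=(1-\G)^{-1}\X=(1-\G)^{-1}\X_+-(1-\G)^{-1}\X_+\subset\D(\G)_+-\D(\G)_+$, and $v\mapsto\sum_n\cc(\int_0^t\T_n(s)v\,\d s)$ is linear, so the series converges for every $v\in\D(\G)$, with the same identity holding there ($b_\infty(t)$ being the corresponding difference, still $=\lim_N\beta_N(t)$).

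Dividing by $t$, for $v\in\D(\G)$ one has $\tfrac1t(\T(t)v-v)=\tfrac1t\int_0^t\T(s)\G v\,\d s\to\G v$, hence $\tfrac1t\cc_0(\int_0^t\T(s)v\,\d s)\to-\langle\mathbf{\Psi},\G v\rangle=\cc_0(v)$; therefore the existence of $\widehat{\cc}(v)$ is equivalent to the existence of $\lim_{t\to0^+}\tfrac1t b_\infty(t)$, and this is the main obstacle of the proof. One works with $v\in\D(\G)_+$ (then extends by linearity): here $b_\infty$ is nondecreasing (pointwise infimum over $N$ of the nondecreasing $\beta_N$), $b_\infty(0)=0$, and $b_\infty(t)\le\cc_0(\int_0^t\T(s)v\,\d s)=O(t)$. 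The delicate point is to upgrade this bound to a genuine limit; I expect this to be handled via the fine properties of Section 4.1: the telescoping identity and \eqref{form4} give the squeeze $0\le\sum_{n>N}\cc(\int_0^t\T_n(s)v\,\d s)\le\beta_N(t)$, and one must show $\sup_{0<t\le\delta}\tfrac1t\beta_N(t)\to0$ as $N\to\infty$ (this is where \eqref{luisanotes1}, \eqref{t1j} and Proposition \ref{lem:mus3} enter, the latter controlling the Dyson–Phillips remainder uniformly) so that a $3\varepsilon$-argument yields the claim. Alternatively one may identify the Laplace transform of $b_\infty$ as $\lambda\mapsto\langle\bl,v\rangle/\lambda$ through \eqref{blan}, note that $\lambda\langle\bl,v\rangle\to\cc_0(v)-\overline{\cc}(v)$ as $\lambda\to\infty$ (continuity of $\overline{\cc}$ in the graph norm of $\G$), and invoke a Tauberian theorem for the nondecreasing $b_\infty$; that route also proves $\widehat{\cc}=\overline{\cc}$, i.e.\ Theorem \ref{final}.

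Finally the bounds are routine. For $v\in\D(\G)_+$ the averaged partial sums in \eqref{hatcc} are nonnegative and dominated by $\tfrac1t\cc_0(\int_0^t\T(s)v\,\d s)$, so $0\le\widehat{\cc}(v)\le\cc_0(v)$; since $\langle\mathbf{\Psi},\G v\rangle\le0$ by \eqref{eq:5} and $\|\mathbf{\Psi}\|=1$, $\cc_0(v)=-\langle\mathbf{\Psi},\G v\rangle\le\|\G v\|$, giving $0\le\widehat{\cc}(v)\le\cc_0(v)\le\|\G v\|$ on $\D(\G)_+$. For general $v\in\D(\G)$ write $v=(1-\G)^{-1}w$ with $w=(1-\G)v$, so $\|w\|\le\|v\|+\|\G v\|$; choose $w=w_1-w_2$ with $w_i\in\X_+$, $\|w_i\|\le M\|w\|$, and set $v_i=(1-\G)^{-1}w_i\in\D(\G)_+$. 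Then $v=v_1-v_2$, $\|v_i\|\le\|w_i\|\le M(\|v\|+\|\G v\|)$, and from $\G v_i=v_i-w_i$ one gets $\|\G v_i\|\le2M(\|v\|+\|\G v\|)$; by linearity $\widehat{\cc}(v)=\widehat{\cc}(v_1)-\widehat{\cc}(v_2)$, whence $|\widehat{\cc}(v)|\le\widehat{\cc}(v_1)+\widehat{\cc}(v_2)\le\|\G v_1\|+\|\G v_2\|\le 4M(\|v\|+\|\G v\|)$.
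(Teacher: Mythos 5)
Your reduction of the problem is sound as far as it goes: the telescoping identity obtained from \eqref{form3}, the monotonicity of $\beta_N(t)=\la\mathbf{\Psi},\B\int_0^t\T_N(s)v\,\d s\ra$ via \eqref{form4}, the convergence of the series $\sum_n\cc\left(\int_0^t\T_n(s)v\,\d s\right)$ for each fixed $t$, and the closing bounds ($|\widehat{\cc}(v)|\leq 4M(\|v\|+\|\G v\|)$ by the positive decomposition through $(1-\G)^{-1}$, and $\widehat{\cc}\leq\cc_0\leq\|\G\cdot\|$ on $\D(\G)_+$) are all correct and close in spirit to the paper. But the central assertion of the proposition --- the \emph{existence} of the limit in \eqref{hatcc}, which you correctly reduce to the existence of $\lim_{t\to0^+}t^{-1}b_\infty(t)$ --- is not proven: you only offer two candidate strategies. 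The first one is not merely incomplete, it cannot work: if $\sup_{0<t\leq\delta}t^{-1}\beta_N(t)\to0$ as $N\to\infty$, then since $b_\infty(t)\leq\beta_N(t)$ for every $N$ you would conclude $t^{-1}b_\infty(t)\to0$, i.e.\ $\widehat{\cc}(v)=\cc_0(v)$ for \emph{every} $v\in\D(\G)_+$; combined with Theorem \ref{final} this would say every trajectory is honest, contradicting the dishonesty phenomenon the whole theory is built to detect. An argument of exactly this $2\varepsilon$-type is what the paper runs in Proposition \ref{cccc0}, but only for $u\in\D(\A)$ (where indeed $\widehat{\cc}=\cc_0$); it does not extend to $\D(\G)$, and Proposition \ref{lem:mus3} controls the Laplace-weighted tails $\B\int_t^\infty e^{-\l s}\T_n(s)u\,\d s$ for \emph{large} $t$, not $t^{-1}\beta_N(t)$ for small $t$. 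Your second (Tauberian) route is more plausible --- the Laplace transform of $b_\infty$ is indeed $\l^{-1}\la\bl,v\ra$ and $\l\la\bl,v\ra\to\cc_0(v)-\overline{\cc}(v)$ --- but it is left as a sketch, requires a Tauberian theorem at $0^+$ to be stated and justified, and it imports the Section 3 objects $\overline{\cc}$, $\bl$ and Lemma \ref{lemm1.4.mkvo}, so it would no longer give a Dyson--Phillips construction of $\widehat{\cc}$ independent of the resolvent approach, which is the point of Section 4.

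For comparison, the paper never passes through $b_\infty$ at all: it proves the exchange identity \eqref{lui1b}, namely
\[
\sum_{k=0}^{\infty}\cc\left(\int_0^t \T_k(s)\left[\int_0^\tau \T(r)u\,\d r\right]\d s\right)
=\sum_{k=0}^{\infty}\cc\left(\int_0^\tau \T_k(s)\left[\int_0^t \T(r)u\,\d r\right]\d s\right),
\]
by a two-sided comparison of double sums based on \eqref{luisanotes2}, establishes the uniform graph-norm estimate \eqref{luisa5} $\left|\sum_k\cc\left(\int_0^t\T_k(s)u\,\d s\right)\right|\leq 4Mt\left(\|u\|+\|\G u\|\right)$ on $\D(\G)$, and then applies this estimate to $u=\frac{1}{t_1}\int_0^{t_1}\T(r)v\,\d r-\frac{1}{t_2}\int_0^{t_2}\T(r)v\,\d r$; letting the auxiliary time go to $0$ via \eqref{lui1b}--\eqref{luisa3} turns this into a Cauchy criterion for $t^{-1}\sum_k\cc\left(\int_0^{t}\T_k(s)v\,\d s\right)$ as $t\to0^+$, since for $v\in\D(\G)$ the averages $\frac1t\int_0^t\T(r)v\,\d r$ converge to $v$ in the graph norm of $\G$. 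You would need either to carry out an argument of this kind or to complete the Tauberian route in full; as it stands, the existence claim --- the heart of the proposition --- remains unproved, and the only concretely proposed mechanism for it is false.
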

\begin{proof} First, one notices that, for any $u \in \X_+$, $n \in \mathbb{N}$ and any $t >0$, one has
$$\sum_{k=0}^n\cc\left(\int_0^t \T_k(s)u \d s\right) \leq \cc_0\left(\int_0^t \T (s)u \d s\right)=-\la \mathbf{\Psi}, \G\int_0^t \T (s)u \d s\ra.$$
In particular, the series $\sum_{k=0}^\infty\cc\left(\int_0^t \T_k(s)u \d s\right)$ converges with
\begin{equation}\label{lui1}\sum_{k=0}^\infty\cc\left(\int_0^t \T_k(s)u \d s\right) \leq -\la \mathbf{\Psi}, \G\int_0^t \T (s)u \d s\ra \leq \|u\|.\end{equation}
Now, for any integers $0<n_1<n_2<n_3$, since, for any $s,r \geq 0$
\begin{multline*}\sum_{k=0}^{n_1}\T_k(s) \left(\sum_{p=0}^{n_2} \T_p(r)u\right) \leq \sum_{k=0}^{2n_2}  \sum_{p=0}^{2n_2-k}\T_k(s)\T_p(r)u=\sum_{k=0}^{2n_2}\T_k(s+r)u\\ \leq \sum_{k=0}^{2n_2}\T_k(s)\left( \sum_{p=0}^{2n_3} \T_p(r)u\right), \qquad \forall u \in \X_+\end{multline*}
we get, for any $t,\,\tau >0$
\begin{multline*}
\sum_{k=0}^{n_1}\cc\left(\int_0^t \T_k(s) \left[\int_0^\tau \sum_{p=0}^{n_2} \T_p(r)u \d r\right]\d s\right) \leq \sum_{k=0}^{2n_2} \cc\left(\int_0^t \d s\int_0^\tau \T_k(s+r)u \d r\right)\\
\leq  \sum_{k=0}^{2n_2}\cc\left(\int_0^t \T_k(s) \left[\int_0^\tau \sum_{p=0}^{2n_3} \T_p(r)u \d r\right]\d s\right) \qquad \forall u \in \X_+.
\end{multline*}
Letting first $n_3$ then $n_2$ and finally $n_1$ go to infinity, we get
\begin{multline*}
\sum_{k=0}^{\infty}\cc\left(\int_0^t \T_k(s) \left[\int_0^\tau \T(r)u \d r\right]\d s\right) \leq \sum_{k=0}^{\infty} \cc\left(\int_0^t \d s\int_0^\tau \T_k(s+r)u \d r\right)\\
\leq  \sum_{k=0}^{\infty}\cc\left(\int_0^t \T_k(s) \left[\int_0^\tau  \T(r)u \d r\right]\d s\right)
\end{multline*}
i.e.
\begin{equation*}\label{lui1b}
\sum_{k=0}^{\infty}\cc\left(\int_0^t \T_k(s) \left[\int_0^\tau \T(r)u \d r\right]\d s\right) =\sum_{k=0}^{\infty} \cc\left(\int_0^t \d s\int_0^\tau \T_k(s+r)u \d r\right), \quad \forall u\in \X_+.
\end{equation*}
In particular, for any $t,\tau >0$
\begin{equation}\label{lui1b}\sum_{k=0}^{\infty}\cc\left(\int_0^t \T_k(s) \left[\int_0^\tau \T(r)u \d r\right]\d s\right) =\sum_{k=0}^{\infty} \cc\left(\int_0^\tau \T_k(s) \left[\int_0^t \T(r)u \d r\right]\d s\right).\end{equation}
From Eq. \eqref{lui1}
$$\left|\sum_{k=0}^\infty\cc\left(\int_0^t \T_k(s)u \d s\right)\right| \leq 2M \|u\| \qquad  \forall u\in \X.$$
Since $\lim_{\tau \to 0^+}\tau^{-1}\int_0^\tau \T(s)u\d s=u$, one gets that
\begin{equation}\label{luisa3}\lim_{\tau \to 0^+}\dfrac{1}{\tau} \sum_{k=0}^\infty\cc\left(\int_0^t \T_k(s)\left[\int_0^\tau \T(r)u \d r\right]\d s\right) =\sum_{k=0}^\infty\cc\left(\int_0^t \T_k(s)u \d s\right) \qquad \forall t >0.\end{equation}
Now, for $u  \in  \D(\G)_+$, Eq. \eqref{lui1} reads
\begin{multline}\label{luisa4}\sum_{k=0}^\infty\cc\left(\int_0^t \T_k(s)u \d s\right) \leq  -\la \mathbf{\Psi}, \int_0^t \T (s)\G u \d s\ra=
\left\|\int_0^t \T (s)\G u \d s \right\| \leq t \|\G u\|\end{multline}
since $\|\mathbf{\Psi}\| \leq 1.$ One extends this estimate to $\D(\G)$ in the following way: let $u \in \D(\G)$ be given and let $v=u-\G u \in \X$. Then, there exist $v_1,v_2$ in $\X_+$ with $v=v_1-v_2$ and $\|v_i\| \leq M\|v\|$, $i=1,2$. Set $u_i=(1-\G)^{-1}v_i$, $i=1,2$. Then, $u_i \in \D(\G)_+$, $\|u_i\| \leq \|v_i\|$, $i=1,2$  and $$\|\G u_1\|+\|\G u_2\|\leq 2\left(\|v_1\|+\|v_2\|\right)\leq 4M\|v\|\leq 4M\left(\|u\|+\|\G u\|\right).$$
Now, from \eqref{luisa4},
$$\left|\sum_{k=0}^\infty\cc\left(\int_0^t \T_k(s)u \d s\right)\right|=\left|\sum_{k=0}^\infty\cc\left(\int_0^t \T_k(s)(u_1-u_2) \d s\right)\right| \leq t\left(\|\G u_1\|+\|\G u_2\|\right) $$
i.e.
\begin{equation}\label{luisa5}
\left|\sum_{k=0}^\infty\cc\left(\int_0^t \T_k(s)u \d s\right)\right| \leq  4Mt\left(\|u\|+\|\G u\|\right) \qquad \forall u \in \D(\G), \:t >0.
\end{equation}
For any $v \in \X$ and any $t_1,t_2 >0$ fixed, applying the above estimate  to $u=\frac{1}{t_1}\int_0^{t_1}\T(r)v\d r-\frac{1}{t_2}\int_0^{t_2}\T(r)v\d r \in \D(\G)$ we get
{\begin{multline*}
\left|\sum_{k=0}^\infty\cc\left(\int_0^t \T_k(s) \left[\frac{1}{t_1}\int_0^{t_1}\T(r)v\d r-\frac{1}{t_2}\int_0^{t_2}\T(r)v\d r\right] \d s\right)\right| \leq \\
4Mt \left\|\frac{1}{t_1}\int_0^{t_1}\T(r)v\d r-\frac{1}{t_2}\int_0^{t_2}\T(r)v\d r\right\| + \\
4Mt \left\|\frac{1}{t_1}\G\int_0^{t_1}\T(r)v\d r-\frac{1}{t_2}\G\int_0^{t_2}\T(r)v\d r\right\|
\end{multline*}}
which, by virtue of \eqref{lui1b}, reads
{\small \begin{multline*}
\left|\frac{1}{t_1}\sum_{k=0}^\infty\cc\left(\int_0^{t_1} \T_k(s)z\d s\right)-\frac{1}{t_2}\sum_{k=0}^\infty\cc\left(\int_0^{t_2} \T_k(s)z   \d s\right)\right| \leq \\
4M  \left\|\frac{1}{t_1}\int_0^{t_1}\T(r)v\d r-\frac{1}{t_2}\int_0^{t_2}\T(r)v\d r\right\| +
4M  \left\|\frac{1}{t_1}\G\int_0^{t_1}\T(r)v\d r-\frac{1}{t_2}\G\int_0^{t_2}\T(r)v\d r\right\|
\end{multline*}}
where $z=t^{-1}\int_0^t \T(r)v\d r$. Letting now $t \to 0^+$ one deduces from \eqref{luisa3} that
{\small \begin{multline*}
\left|\frac{1}{t_1}\sum_{k=0}^\infty\cc\left(\int_0^{t_1} \T_k(s)v\d s\right)-\frac{1}{t_2}\sum_{k=0}^\infty\cc\left(\int_0^{t_2} \T_k(s)v   \d s\right)\right| \leq \\
4M  \left\|\frac{1}{t_1}\int_0^{t_1}\T(r)v\d r-\frac{1}{t_2}\int_0^{t_2}\T(r)v\d r\right\| +
4M  \left\|\frac{1}{t_1}\G\int_0^{t_1}\T(r)v\d r-\frac{1}{t_2}\G\int_0^{t_2}\T(r)v\d r\right\|
\end{multline*}}
If $v \in \D(\G)$ then $\frac{1}{t_i}\G \int_0^{t_i}\T(r)  v\d r=\frac{1}{t_i} \int_0^{t_i}\T(r)\G v\d r$, $i=1,2$ and it is easy to see that, for any $\varepsilon>0$, there exists $\delta >0$ such that
$$\left|\frac{1}{t_1}\sum_{k=0}^\infty\cc\left(\int_0^{t_1} \T_k(s)v\d s\right)-\frac{1}{t_2}\sum_{k=0}^\infty\cc\left(\int_0^{t_2} \T_k(s)v   \d s\right)\right| \leq 4M\varepsilon, \qquad \forall 0<t_1<t_2<\delta.$$
This achieves to prove that, for any $v \in \D(\G)$, the limit $\lim_{t \to 0^+} \frac{1}{t}\sum_{k=0}^\infty\cc\left(\int_0^{t} \T_k(s)v\d s\right)$ exists. We denote this limit by $\widehat{\cc}(v)$ and the first part of the Theorem is proved. The first estimate $|\widehat{\cc}(v)| \leq 4M\left(\|v\|+\|\G v\|\right)$ is a direct consequence of \eqref{luisa5}. Finally, since
$$\sum_{k=0}^\infty\cc\left(\int_0^t \T_k(s)v \d s\right) \leq \cc_0\left(\int_0^t \T(s)v \d s\right)$$
one gets that
$$\widehat{\cc}(v)=\lim_{t \to 0^+}{t}^{-1}\sum_{k=0}^\infty\cc\left(\int_0^t \T_k(s)v \d s\right)
\leq \lim_{t \to 0^+}{t}^{-1}\cc_0\left(\int_0^t \T(s)v \d s\right)=\cc_0(v)$$
since $\lim_{t \to 0^+}\frac{1}{t}\int_0^t \T(s)v \d s=v$ in the graph norm of $\G$ and $\cc_0(\cdot)$ is continuous with respect to the graph norm of $\D(\G)$. The fact that $\cc_0(v)\leq \|\G v\|$ is a direct consequence of the estimate $\|\mathbf{\Psi}\| \leq 1.$
\end{proof}

Before investigating further properties of the functional $\hat{\cc}$ we need to establish several properties of the various terms $\T_n(t)$ appearing in \eqref{dysonphi2}.

\subsection{Further properties of $\widehat{\cc}$} We are now in position to establish very useful properties of the functional $\widehat{\cc}$ complementing Proposition \ref{hatccdef}.
\begin{propo}\label{cccc0} The functional $\widehat{\cc}(\cdot) \::\;\D(\G) \to \mathbb{R}$ defined by \eqref{hatcc} is such that $\widehat{\cc}(v)=\cc_0(v)$ for any $v \in \D(\A)$. Consequently,
$$\widehat{\cc}(u)=\cc_0(u), \qquad\qquad \forall u \in \D(\overline{\A+\B}).$$
\end{propo}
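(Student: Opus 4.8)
The plan is to rewrite the series defining $\widehat{\cc}$ in \eqref{hatcc} as a single time integral of an explicit nonnegative function and then pass to the limit $t\to 0^+$. Since $\widehat{\cc}$ is linear on $\D(\G)$ and bounded for the graph norm of $\G$ (Proposition~\ref{hatccdef}), as is $\cc_0$ (because $|\cc_0(v)|\leq\|\G v\|$), it suffices to prove $\widehat{\cc}(v)=\cc_0(v)$ for $v\in\D(\A)_+$: any $v\in\D(\A)$ is the limit, in the graph norm of $\A$ and hence of $\G$, of the differences $v_n^1-v_n^2$ with $v_n^i:=n\int_0^{1/n}\u(s)v_i\,\d s\in\D(\A)_+$ (where $v=v_1-v_2$, $v_i\in\X_+$), and $\D(\overline{\A+\B})$ is the closure of $\D(\A)$ for the graph norm of $\G$. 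So fix $v\in\D(\A)_+$. Each $\T_n(s)$ is positive, so $\T_n(s)v\in\D(\A)_+\subset\D(\G)_+$ by Proposition~\ref{luisa2}(2), and therefore $\cc(\T_n(s)v)=\cc_0(\T_n(s)v)\geq 0$ by \eqref{eq:5}; moreover $\sum_{k=0}^{N}\cc(\T_k(s)v)=\cc_0\bigl(\sum_{k=0}^{N}\T_k(s)v\bigr)\leq\cc_0(\T(s)v)$ by \eqref{sommen} and the monotonicity of $\cc_0$. Hence $h(s):=\sum_{n=0}^{\infty}\cc(\T_n(s)v)$ defines a measurable function with $0\leq h(s)\leq\cc_0(\T(s)v)$, in particular locally integrable.

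Next I would rewrite the sum in \eqref{hatcc}. By \eqref{mustrick2} one has $(\A+\B)\int_0^t\T_n(s)v\,\d s=\int_0^t(\A+\B)\T_n(s)v\,\d s$ for $v\in\D(\A)$, so commuting $\mathbf{\Psi}$ with the integral gives $\cc\bigl(\int_0^t\T_n(s)v\,\d s\bigr)=\int_0^t\cc(\T_n(s)v)\,\d s$; summing over $n$ and using Tonelli's theorem (all terms are nonnegative) yields
\[
\sum_{n=0}^{\infty}\cc\Bigl(\int_0^t\T_n(s)v\,\d s\Bigr)=\int_0^t h(s)\,\d s,\qquad t\geq 0,
\]
whence, by \eqref{hatcc}, $\widehat{\cc}(v)=\lim_{t\to 0^+}\tfrac1t\int_0^t h(s)\,\d s$.

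It now suffices to show that $h$ is right-continuous at $0$ with $h(0^+)=\cc_0(v)$, since then $\tfrac1t\int_0^t h(s)\,\d s\to h(0^+)$. The key point is the two-sided bound
\[
\cc\bigl(\u(s)v\bigr)=\cc(\T_0(s)v)\ \leq\ h(s)\ \leq\ \cc_0\bigl(\T(s)v\bigr),\qquad s\geq 0,
\]
whose outer terms both tend to $\cc_0(v)$ as $s\to 0^+$: on the left, $\u(s)v\to v$ in the graph norm of $\A$ (strong continuity of $\ut$ together with $\A\u(s)v=\u(s)\A v\to\A v$) and $\B$ is $\A$-bounded, so $(\A+\B)\u(s)v\to(\A+\B)v$ and hence $\cc(\u(s)v)\to\cc(v)=\cc_0(v)$ (recall $\cc$ is the restriction of $\cc_0$ to $\D(\A)$); on the right, $\cc_0$ is continuous for the graph norm of $\G$ and $\T(s)v\to v$ in that norm (as $\G\T(s)v=\T(s)\G v\to\G v$), so $\cc_0(\T(s)v)\to\cc_0(v)$. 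By the squeeze $\lim_{s\to 0^+}h(s)=\cc_0(v)$, which completes the proof for $v\in\D(\A)_+$ and, by the density argument above, gives $\widehat{\cc}=\cc_0$ on $\D(\overline{\A+\B})$.

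I expect the continuity of $h$ at $0$ to be the main difficulty. The naive route — telescoping the identity $\A\int_0^t\T_n(s)v\,\d s=\T_n(t)v-\B\int_0^t\T_{n-1}(s)v\,\d s$ of Proposition~\ref{luisa2}(3) — produces a remainder $\B\int_0^t\T_N(s)v\,\d s$ that one would have to control as $t\to 0^+$ uniformly in $N$, which is awkward; the sandwich above avoids this entirely, the useful half being the inequality $\sum_n\cc(\T_n(s)v)\leq\cc_0(\T(s)v)$ combined with the graph-norm continuity of $\cc_0$.
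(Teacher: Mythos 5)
Your argument is correct, but it reaches the key limit by a genuinely different route than the paper. The paper starts from the telescoped identity \eqref{eqq0}, obtains \eqref{eqq1}, and is then left with showing that $\lim_{n}\la \mathbf{\Psi},\B\int_0^t\T_n(s)u\,\d s\ra$ is of order $\varepsilon t$ for small $t$; it gets this from the pointwise inequality \eqref{eqq3} (a consequence of Proposition \ref{luisa2}(2)) combined with the locally uniform convergence of $\sum_k\T_k(s)\A u$, i.e. exactly the uniform-in-$n$ control of the remainder that you anticipated and deliberately avoided. Your proof instead converts $\widehat{\cc}(v)$, for $v\in\D(\A)_+$, into the Ces\`aro mean of $h(s)=\sum_n\cc(\T_n(s)v)$ (using \eqref{mustrick2}, commutation of $\mathbf{\Psi}$ with the Bochner integral, and monotone convergence, all legitimate here since each term is nonnegative for $v\in\D(\A)_+$) and then squeezes $h$ between $\cc(\u(s)v)$ and $\cc_0(\T(s)v)$, both of which tend to $\cc_0(v)$ by $\A$-graph-norm, respectively $\G$-graph-norm, continuity; this is shorter, needs no uniform estimate, and in fact yields the stronger pointwise statement $h(s)\to\cc_0(v)$ as $s\to 0^+$. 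Two remarks on the trade-off: the paper's computation is not wasted effort, since \eqref{eqq1} and the smallness of $\la\mathbf{\Psi},\B\int_0^t\T_n(s)u\,\d s\ra$ are reused later (notably in the proof of Theorem \ref{equivalencemild}), whereas your route would have to rederive the analogue of \eqref{eqq1} there; and your explicit reduction from $\D(\A)_+$ to $\D(\A)$ and then to $\D(\overline{\A+\B})$ (mollification $v_n^i=n\int_0^{1/n}\u(s)v_i\,\d s$ plus the graph-norm bounds on $\widehat{\cc}$ from Proposition \ref{hatccdef} and on $\cc_0$) is sound and in fact spells out a step the paper leaves largely implicit.
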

\begin{proof} From \eqref{form3} one sees that, for any $n \geq 1$ and any $u \in \X_+$
\begin{equation}\label{eqq0}
\sum_{k=0}^n (\A+\B)\int_0^t \T_k(s)u \d s=\sum_{k=0}^n  \T_k(s)u - u + \B \int_0^t \T_n(s)u \d s, \qquad \forall n \in \mathbb{N}.
\end{equation}
In particular,
\begin{equation*}-\la \mathbf{\Psi}, \sum_{k=0}^n (\A+\B)\int_0^t \T_k(s)u \d s \ra = \la \mathbf{\Psi}, u \ra - \sum_{k=0}^n \la \mathbf{\Psi}, \T_k(s)u \ra - \la \mathbf{\Psi}, \B \int_0^t \T_n(s)u \d s \ra.\end{equation*}
Letting $n$ go to infinity, we see that $\lim_{n \to \infty} \left\|\B \int_0^t \T_n(s)u \d s \right\|$ exists and
\begin{equation}\label{eqq1}
\sum_{k=0}^\infty\cc\left(\int_0^t \T_k(s)u \d s\right) =\la \mathbf{\Psi}, u -\T(t)u\ra -\lim_{n \to \infty}  \la \mathbf{\Psi}, \B \int_0^t \T_n(s)u \d s \ra, \qquad \forall t >0.\end{equation}
Now, for any $u \in \D(\A)_+$ and any $k \geq 1$, one deduces from Proposition \ref{luisa2}, \textit{(2)} that
$$\la \mathbf{\Psi}, \B\T_k(s)u\ra \leq -\la \mathbf{\Psi},\A\T_k(s)u\ra=\la \mathbf{\Psi},\B\T_{k-1}(s)u\ra -\la \mathbf{\Psi},\T_k(s)\A u\ra-\la \mathbf{\Psi},\T_{k-1}(s)\B u\ra$$
and
\begin{equation}\label{eqq3}\la \mathbf{\Psi}, \B\T_k(s)u\ra -\la \mathbf{\Psi},\B\T_{k-1}(s)u\ra  \leq -\la \mathbf{\Psi},\T_k(s)\A u\ra \qquad \forall s \geq 0.\end{equation}
Since, for any $u \in \D(\A)$ the series $\sum_{k=0}^\infty\T_k(t)\A u$ converges to $\T(t)\A u$ uniformly on every bounded time interval, for any $T>0$  and any $\varepsilon >0$, there exists $N \geq 1$ such that, for any $s \in (0,T)$ and any $n \geq N$, $\left|\sum_{k=N}^n\la \mathbf{\Psi},\T_k(s)\A u\ra\right| \leq \varepsilon$. From \eqref{eqq3}, one gets
$$\sum_{k=N}^n \left(\la \mathbf{\Psi}, \B\T_k(s)u\ra -\la \mathbf{\Psi},\B\T_{k-1}(s)u\ra\right) \leq \varepsilon, \qquad \forall s \in (0,T)$$
i.e. $\la \mathbf{\Psi}, \B\T_n(s)u\ra \leq \la \mathbf{\Psi},\B\T_{N-1}(s)u\ra + \varepsilon$ for any $s \in (0,T)$.
Fixed $N >1$ and $u \in \D(\A)$, the mapping $s \in (0,T)  \mapsto  \B\T_{N-1}(s)u$ being continuous and converging to zero as $s \to 0^+$, there exists $t >0$ such that $\la \mathbf{\Psi},\B\T_{N-1}(s)u\ra < \varepsilon$ for any $0 < s < t$ and consequently $\la \mathbf{\Psi}, \B\T_n(s)u\ra < 2\varepsilon$ for any $n >N$ and any $0<s<t.$ Now, from Eq. \eqref{mustrick2} one has
$$\la \mathbf{\Psi}, \B \int_0^t \T_n(s)u \d s \ra=\la \mathbf{\Psi},  \int_0^t \B \T_n(s)u \d s \ra \leq 2\varepsilon t \qquad \forall n >N.$$
Then, one deduces from \eqref{eqq1} that
$$\left|\sum_{k=0}^\infty{t}^{-1}\cc\left(\int_0^t \T_k(s)u \d s\right) -\la \mathbf{\Psi}, \dfrac{u -\T(t)u}{t}\ra \right| \leq 2\varepsilon \qquad \forall t >0.$$
Letting $t \to 0^+$, since $\lim_{t \to 0^+}t^{-1}\la \mathbf{\Psi}, \dfrac{u -\T(t)u}{t}\ra=-\la \mathbf{\Psi}, \G u\ra=\cc_0(u)$, we get that
 $|\widehat{\cc}(u)-\cc_0(u)| \leq 2\varepsilon.$  This proves that $\widehat{\cc}$ coincides with $\cc_0$ on $\D(\A)$ since $\varepsilon$ is arbitrary. Finally, if $u \in \D(\overline{\A+\B})$, there exists a sequence $(u_n)_n \subset \D(\A)$ with $u_n \to u$ and $(\A+\B)u_n \to \G u$ as $n \to \infty$. Since $\widehat{\cc}(u_n)=\cc_0(u_n)$ for any $n\in \mathbb{N}$, one deduces easily that $\widehat{\cc}(u)=\cc_0(u).$\end{proof}

\subsection{Mild honesty} We introduce now another concept of honest trajectories. To distinguish it \
\textit{a priori} from the previous one, we will speak rather of mild
honesty.
\begin{defi} Let $u \in \X_+$ be given. Then, the trajectory
$(\T(t)u)_{t \geq 0}$ is said to be \textit{\textbf{mild honest}} if and
only if
$$\|\T(t)u\|=\| u\|-\widehat{\cc}\bigg(\int_0^t \T(s)u \d
s\bigg),\quad \text{ for any }   t\geq 0.$$
\end{defi}
We are now in position to state the main result of this section, reminiscent to Theorem \ref{equivalence}:
\begin{theo}\label{equivalencemild} Given $u \in \X_+$, the following statements are equivalent
\begin{enumerate}
\item the trajectory $(\T(t)u)_{t \geq 0}$ is mild honest;
\item $\lim_{n \to \infty}\|\B\int_0^t \T_n(s)u\d s\|=0$ for any $t >0$;
\item $\int_0^t \T(s)u\d s \in \D(\overline{\A+\B})$ for any $t >0$;
\item the set $\left(\B\int_0^t \T_n(s)u\d s\right)_n$ is relatively weakly compact in $\X$ for any $t >0$.
\end{enumerate}
\end{theo}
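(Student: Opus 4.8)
The plan is to run the cycle $(1)\Leftrightarrow(2)\Rightarrow(4)\Rightarrow(3)\Rightarrow(1)$, whose real content is a single identity expressing $\widehat{\cc}$ of the trajectory average through the remainders $\B\int_0^t\T_n(s)u\d s$. Fix $u\in\X_+$ and $t>0$ and set $v:=\int_0^t\T(r)u\d r\in\D(\G)$. Unravelling the definition \eqref{hatcc} of $\widehat{\cc}(v)$, I would first use the commutation relation \eqref{lui1b} (already proved inside Proposition \ref{hatccdef}) to interchange the two time variables in $\sum_n\cc\bigl(\int_0^\tau\T_n(s)[\int_0^t\T(r)u\d r]\d s\bigr)$, and then send $\tau\to0^+$ via \eqref{luisa3}; this gives $\widehat{\cc}(v)=\sum_{n=0}^\infty\cc\bigl(\int_0^t\T_n(s)u\d s\bigr)$. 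Plugging in formula \eqref{eqq1} from the proof of Proposition \ref{cccc0}, and using that $\B\int_0^t\T_n(s)u\d s\in\X_+$ (so that $\langle\mathbf{\Psi},\cdot\rangle$ is the norm there) while $u,\T(t)u\in\X_+$, one arrives at
\[
\widehat{\cc}\Bigl(\int_0^t\T(r)u\d r\Bigr)=\|u\|-\|\T(t)u\|-\lim_{n\to\infty}\Bigl\|\B\int_0^t\T_n(s)u\d s\Bigr\|.
\]

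Granting this identity, $(1)\Leftrightarrow(2)$ is immediate: by \eqref{equalityG} one has $\|\T(t)u\|-\|u\|=-\cc_0(\int_0^t\T(r)u\d r)$, so mild honesty is exactly the statement that $\widehat{\cc}(\int_0^t\T(r)u\d r)=\|u\|-\|\T(t)u\|$ for all $t$, which by the displayed identity is equivalent to $\lim_n\|\B\int_0^t\T_n(s)u\d s\|=0$ for every $t>0$ (the case $t=0$ being trivial). Moreover $(2)\Rightarrow(4)$ is obvious, a norm-null sequence together with $0$ being norm-compact, hence relatively weakly compact.

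For $(4)\Rightarrow(3)$ and $(4)\Rightarrow(2)$ I would argue as in the proof of Theorem \ref{equivalence}. Fix $t>0$, put $x_n:=\B\int_0^t\T_n(s)u\d s$ and $v_n:=\sum_{k=0}^n\int_0^t\T_k(s)u\d s\in\D(\A)$ (the domain of $\A+\B$), so that, by \eqref{form3} and telescoping, $(\A+\B)v_n=\sum_{k=0}^n\T_k(t)u-u+x_n$ (this is \eqref{eqq0}), while $v_n\to v:=\int_0^t\T(s)u\d s$ in $\X$ because the Dyson--Phillips series \eqref{dyson} converges uniformly on $[0,t]$. Assuming $\{x_n\}_n$ relatively weakly compact, pick any weakly convergent subsequence $x_{n_j}\rightharpoonup z$; then $v_{n_j}\to v$ and $(\A+\B)v_{n_j}\rightharpoonup\T(t)u-u+z$, so the weak closedness of the graph of the closed operator $\overline{\A+\B}$ gives $v\in\D(\overline{\A+\B})$ with $(\overline{\A+\B})v=\T(t)u-u+z$. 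But $v=\int_0^t\T(s)u\d s\in\D(\G)$ satisfies $\G v=\T(t)u-u$ and $\overline{\A+\B}\subset\G$, forcing $z=0$. As every weakly convergent subsequence of $\{x_n\}_n$ thus has limit $0$, the Eberlein--Smulian theorem gives $x_n\rightharpoonup0$, hence $\|x_n\|=\langle\mathbf{\Psi},x_n\rangle\to0$; this is $(2)$, while $v\in\D(\overline{\A+\B})$ is $(3)$.

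Finally $(3)\Rightarrow(1)$: if $\int_0^t\T(s)u\d s\in\D(\overline{\A+\B})$ for all $t>0$, then Proposition \ref{cccc0} gives $\widehat{\cc}(\int_0^t\T(s)u\d s)=\cc_0(\int_0^t\T(s)u\d s)$, which by \eqref{equalityG} equals $\|u\|-\|\T(t)u\|$, i.e.\ the trajectory is mild honest. The only genuinely new step is the identity at the head of the argument; since each of its three ingredients (\eqref{lui1b}, \eqref{luisa3}, \eqref{eqq1}) is already in hand, the main obstacle there is purely bookkeeping --- keeping straight which time variable is integrated against $\T_n$ and which is sent to $0$ when the commutation relation is invoked. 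The compactness argument for $(4)$ is then a routine transcription of the one used for Theorem \ref{equivalence}.
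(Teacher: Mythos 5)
Your proof is correct and follows essentially the same route as the paper: the key identity $\widehat{\cc}\bigl(\int_0^t\T(r)u\d r\bigr)=\|u\|-\|\T(t)u\|-\lim_n\bigl\|\B\int_0^t\T_n(s)u\d s\bigr\|$ deduced from \eqref{lui1b}, \eqref{luisa3} and \eqref{eqq1}, then the telescoped relation \eqref{eqq0} together with the (weak, hence strong) closedness of the graph of $\overline{\A+\B}$, and Proposition \ref{cccc0} for the implication from (3). The only cosmetic difference is in treating (4): the paper passes from a weak limit point directly to membership in the closure of the graph (hence (3)) without identifying that limit, whereas you additionally pin it down to $0$ using $\overline{\A+\B}\subset\G$ and Eberlein--\v{S}mulian to recover (2) as well --- a harmless extra step.
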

\begin{proof} Let $u \in \X_+$ and $t>0$ be fixed. One has $\int_0^t \T(s)u\d s \in \D(\G)$ and
$$\widehat{\cc}\left(\int_0^t \T(s)u\d s\right)=\lim_{\tau \to 0^+}\tau^{-1}\sum_{n=0}^\infty \cc\left(\int_0^\tau \T_n(s)\d s\left[\int_0^t \T(r)u \d r\right]\right).$$
From \eqref{lui1b} and \eqref{luisa3}, it is easy to deduce that
\begin{equation}\label{egaliteX}\widehat{\cc}\left(\int_0^t \T(s)u\d s\right)=\sum_{n=0}^\infty \cc\left(\int_0^t \T_n(s)u\d s\right), \qquad \forall u \in \X_+, \:t >0.\end{equation}
Thus, Eq. \eqref{eqq1} can be rewritten as
$$\widehat{\cc}\left(\int_0^t \T(s)u\d s\right)=\la \mathbf{\Psi},u-\T(t)u\ra - \lim_{n \to \infty}\left\|\B\int_0^t \T_n(s)u\d s\right\|.$$
This proves immediately that $\textit{(1)} \Longleftrightarrow \textit{(2)}.$ Let us prove that $\textit{(2)} \Longrightarrow \textit{(3)}$. Observe that, according to \eqref{eqq0}
$$\left(\A+\B\right)\left(\sum_{k=0}^n \int_0^t \T_k(s)u\d s\right)=\sum_{k=0}^n \T_k(t)u - u + \B\int_0^t \T_n(s)u\d s$$
so that, from \textit{(2)} we deduce that the right-hand side converges to $\T(t)u-u$ as $n$ goes to infinity. Since $\sum_{k=0}^n \int_0^t \T_k(s)u\d s$ converges to $\int_0^t\T(s)u\d s$ as $n$ goes to infinity, one gets immediately that \textit{(3)} holds with $\left(\overline{\A+\B}\right)\int_0^t\T(s)u\d s=\T(t)u-u.$ Let us now assume that $\textit{(3)}$ holds. Then, from \eqref{cccc0},
$$\widehat{\cc}\left(\int_0^t \T(s)u\d s\right)=\cc_0\left(\int_0^t \T(s)u\d s\right)$$
i.e. $\widehat{\cc}\left(\int_0^t \T(s)u\d s\right)=\|u\|-\|\T(t)u\|$ which is nothing but \textit{(1)}. Assume now \textit{(4)} to hold. Then, up to extracting a subsequence, we may assume that $\B\int_0^t \T_n(s)u\d s$ converges weakly to some $v \in \X.$ Then, $\sum_{k=0}^n \int_0^t\T_k(s)u\d s$ converges weakly to $\int_0^t\T(s)u\d s$ while
$$\left(\A+\B\right)\sum_{k=0}^n \int_0^t\T_k(s)u\d s \textrm{ converges weakly to } \left(\T(t)u-u - v\right).$$
In particular, $\left(\int_0^t \T(s)u\d s, \T(t)u-u - v\right)$ belongs to the weak closure (and thus the strong closure) of the graph of $\A+\B$. In particular, \textit{(3)} holds. Finally, it is clear that $\textit{(2)} \implies \textit{(4)}.$
\end{proof}
The following result proves that the two notions of honesty and mild honesty are equivalent:
\begin{theo}\label{final} The two functionals $\widehat{a}$ and $\overline{a}$ coincide and
consequently the notions of honest or mild honest trajectories are
equivalent.
\end{theo}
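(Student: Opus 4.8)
The plan is to prove the stronger assertion that $\widehat{\cc}(v)=\overline{\cc}(v)$ for every $v\in\D(\G)$; granting this, comparing Definition \ref{defi:hon} with the definition of mild honesty shows at once that a trajectory $(\T(t)u)_{t\ge0}$ is honest if and only if it is mild honest, both conditions then being $\|\T(t)u\|=\|u\|-\overline{\cc}(\int_0^t\T(s)u\d s)$ for all $t\ge0$. Recall that $\overline{\cc}$ is continuous for the graph norm of $\G$ (Remark following Proposition \ref{prop1.1.mkvo}) and that $\widehat{\cc}$, being linear (a pointwise limit of linear functionals) and bounded on $\D_\G$ by Proposition \ref{hatccdef}, is $\D_\G$-continuous as well. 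It therefore suffices to check that
\[
\widehat{\cc}\Big(\int_0^t\T(s)u\d s\Big)=\overline{\cc}\Big(\int_0^t\T(s)u\d s\Big),\qquad u\in\X_+,\ t>0,
\]
since by linearity of both functionals and $\X=\X_+-\X_+$ this extends to every $u\in\X$, and then for an arbitrary $v\in\D(\G)$ it gives, after dividing by $t$ and letting $t\to0^+$, the identity $\widehat{\cc}(v)=\overline{\cc}(v)$ because $t^{-1}\int_0^t\T(s)v\d s\to v$ in $\D_\G$.

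Fix $u\in\X_+$, $t>0$ and put $w_t:=\int_0^t\T(s)u\d s\in\D(\G)$. On one side, \eqref{egaliteX} together with \eqref{eqq1} and \eqref{equalityG} gives
\[
\widehat{\cc}(w_t)=\sum_{n\ge0}\cc\Big(\int_0^t\T_n(s)u\d s\Big)=\cc_0(w_t)-G(t),\qquad G(t):=\lim_{n\to\infty}\Big\|\B\int_0^t\T_n(s)u\d s\Big\|,
\]
the limit existing by \eqref{eqq1}. On the other side, rewriting Definition \ref{defi:bl} with $z=(\l-\G)w$ yields $\cc_0(w)-\overline{\cc}(w)=\la\bl,(\l-\G)w\ra$ for every $w\in\D(\G)$ and every $\l>0$, the left-hand side being $\l$-independent; applied to $w=w_t$, for which $(\l-\G)w_t=\l w_t+u-\T(t)u$, this reads
\[
\overline{\cc}(w_t)=\cc_0(w_t)-F(t),\qquad F(t):=\la\bl,\l w_t+u-\T(t)u\ra.
\]
Consequently the desired identity is equivalent to $F(t)=G(t)$ for all $t>0$, which I would establish by matching Laplace transforms.

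Since $\int_0^t\T_n(s)u\d s\in\D(\A)\cap\X_+$, the element $\B\int_0^t\T_n(s)u\d s$ is positive and $\|\B\int_0^t\T_n(s)u\d s\|=\la\mathbf{\Psi},\B\int_0^t\T_n(s)u\d s\ra\le\|u\|$ for every $n$ (iterate \eqref{form4} down to $n=0$ and use \eqref{cB}, in the spirit of the proof of Proposition \ref{lem:mus3}). This uniform bound makes dominated convergence applicable in $\int_0^\infty e^{-\mu t}G(t)\d t$, and, via \eqref{form3} and \eqref{luisanotes1}, it shows that $s\mapsto\int_0^s\T_n(r)u\d r$ is bounded in the $\A$-graph norm, so the $\A$-bounded operator $\B$ may be pulled out of the integral $\int_0^\infty e^{-\mu t}\int_0^t\T_n(s)u\d s\,\d t=\mu^{-1}\int_0^\infty e^{-\mu s}\T_n(s)u\d s$ (the last equality being the computation underlying \eqref{integral}). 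Hence, by \eqref{LnTn}, \eqref{blan} and Lemma \ref{lemm1.4.mkvo},
\[
\int_0^\infty e^{-\mu t}G(t)\d t=\lim_{n\to\infty}\frac1\mu\la\mathbf{\Psi},[\B(\mu-\A)^{-1}]^{n+1}u\ra=\frac1\mu\la\mathbf{\Xi}_\mu,u\ra.
\]
For $F$, the continuous linear form $\bl$ commutes with the Bochner integrals and, by \eqref{integral}, $\int_0^\infty e^{-\mu t}\int_0^t\T(s)u\d s\,\d t=\mu^{-1}(\mu-\G)^{-1}u$ while $\int_0^\infty e^{-\mu t}\T(t)u\d t=(\mu-\G)^{-1}u$; since $(\l-\mu)(\mu-\G)^{-1}u+u=(\l-\G)(\mu-\G)^{-1}u$, a short computation gives $\int_0^\infty e^{-\mu t}F(t)\d t=\mu^{-1}\la\bl,(\l-\G)(\mu-\G)^{-1}u\ra$. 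Finally, Definition \ref{defi:bl} and the $\l$-independence of $\overline{\cc}$ (Proposition \ref{prop1.1.mkvo}) give $\la\bl,(\l-\G)v\ra=\cc_0(v)-\overline{\cc}(v)=\la\mathbf{\Xi}_\mu,(\mu-\G)v\ra$ for all $v\in\D(\G)$; the choice $v=(\mu-\G)^{-1}u$ turns this into $\la\bl,(\l-\G)(\mu-\G)^{-1}u\ra=\la\mathbf{\Xi}_\mu,u\ra$, so $\int_0^\infty e^{-\mu t}F(t)\d t=\mu^{-1}\la\mathbf{\Xi}_\mu,u\ra$ as well.

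Both $F$ and $G$ are continuous functions of $t$ (each equals $\cc_0(w_t)$ minus a $\D_\G$-continuous functional evaluated along the $\D_\G$-continuous curve $t\mapsto w_t$), so the equality of their Laplace transforms for all $\mu>0$ forces $F\equiv G$; thus $\widehat{\cc}(w_t)=\overline{\cc}(w_t)$ and, by the reduction of the first paragraph, $\widehat{\cc}=\overline{\cc}$ on $\D(\G)$, whence honesty and mild honesty coincide. I expect the only genuinely delicate points to be the two interchanges in the previous paragraph — extracting the closed operator $\B$ from the Laplace integral, and passing $\lim_n$ through it by dominated convergence — both of which rest on the uniform estimate $\|\B\int_0^t\T_n(s)u\d s\|\le\|u\|$; the rest is bookkeeping with identities already recorded above.
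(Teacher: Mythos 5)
Your proof is correct, but it follows a route that differs from the paper's in a way worth spelling out. The paper's argument is more direct: it Laplace-transforms the identity \eqref{egaliteX} itself, using the $\D_\G$-continuity of $\widehat{\cc}$ and $\cc_0$ to pull them through the Bochner integrals and \eqref{LnTn} to identify the term-by-term transforms, which lands exactly on $\widehat{\cc}\left((\l-\G)^{-1}u\right)=\sum_{k}\cc\left((\l-\A)^{-1}\left[\B(\l-\A)^{-1}\right]^{k}u\right)$, i.e.\ on the defining series of $\overline{\cc}_\l$; since $(\l-\G)^{-1}$ maps $\X$ onto $\D(\G)$ this gives $\widehat{\cc}=\overline{\cc}_\l$ at once, and, because $\widehat{\cc}$ does not depend on $\l$, it simultaneously reproves Proposition \ref{prop1.1.mkvo} -- which is precisely why the paper can advertise Theorem \ref{final} as an alternative proof of that proposition. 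You instead compare the two scalar defect functions $F(t)=\cc_0(w_t)-\overline{\cc}(w_t)=\la\bl,(\l-\G)w_t\ra$ and $G(t)=\cc_0(w_t)-\widehat{\cc}(w_t)=\lim_n\left\|\B\int_0^t\T_n(s)u\,\d s\right\|$, show both Laplace transforms equal $\mu^{-1}\la\mathbf{\Xi}_\mu,u\ra$ (via Lemma \ref{lemm1.4.mkvo}, \eqref{blan} and the uniform bound $\left\|\B\int_0^t\T_n(s)u\,\d s\right\|\leq\|u\|$ with dominated convergence), and conclude by uniqueness of the Laplace transform plus a differentiation-at-zero step to pass from $w_t$ to general $v\in\D(\G)$. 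This is sound, and it has the pleasant side effect of establishing directly the formula for the mass-loss functional, $-\Delta_u(t)=\lim_n\left\|\B\int_0^t\T_n(s)u\,\d s\right\|$, which the paper only records as a remark after the theorem. The price is that your evaluation of $\int_0^\infty e^{-\mu t}F(t)\,\d t$ uses the $\l$-independence of $\overline{\cc}$ (Proposition \ref{prop1.1.mkvo}~(2)) as an input; this is legitimate within the paper's logical order, since that proposition is proved earlier by reference to \cite{mkvo}, but your argument therefore cannot serve as the independent alternative proof of Proposition \ref{prop1.1.mkvo} that the paper's own proof provides. Your interchanges (pulling the $\A$-bounded operator $\B$ through the Laplace integral via graph-norm convergence guaranteed by Proposition \ref{luisa2}~(3), and the dominated-convergence passage in $n$) are justified as you indicate.
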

\begin{proof} Let $u \in \X_+$ and $\l >0$ be given.  One deduces from \eqref{egaliteX} that
$$\int_0^\infty \exp(-\l t)\widehat{\cc}\left(\int_0^t \T(s)u\d s\right)\d t=\sum_{k=0}^\infty \int_0^\infty \exp(-\l t){\cc}\left(\int_0^t \T_k(s)u\d s\right)\d t $$
because all the functions involved are positive. On the other hand, since the mapping $t \geq 0 \mapsto \int_0^t \T(s)u\d s \in \D(\G)$ is continuous as well as the mapping $t \geq 0 \mapsto \int_0^t \T_k(s)u\d s \in \D(\A) \subset \D(\G)$, we have
\begin{equation*}\begin{split}
\int_0^\infty \exp(-\l t)\widehat{\cc}\left(\int_0^t \T(s)u\d s\right)\d t&=\widehat{\cc}\left(\int_0^\infty \exp(-\l t)\d t \int_0^t \T(s)u\d s\right)\\
&=\dfrac{1}{\l}\widehat{\cc}\left(\int_0^\infty \exp(-\l s)\T(s)u\d s\right)=\dfrac{1}{\l}\widehat{\cc}\left((\l-\A)^{-1}u\right)
\end{split}
\end{equation*}
since $\widehat{\cc}$ is $\D(\G)$-continuous. We also have, for any $k \in \mathbb{N}$
\begin{equation*}\begin{split}
\int_0^\infty &\exp(-\l t) {\cc}\left(\int_0^t \T_k(s)u\d s\right)\d t = {\cc_0}\left(\int_0^\infty \exp(-\l t)\d t \int_0^t \T_k(s)u\d s\right)\\
&=\dfrac{1}{\l} {\cc_0}\left(\int_0^\infty \exp(-\l s)\T_k(s)u\d s\right) =\dfrac{1}{\l} {\cc_0}\left((\l-\A)^{-1}\left(\B(\l-\A)^{-1}\right)^ku\right) \\& =\dfrac{1}{\l} {\cc}\left((\l-\A)^{-1}\left(\B(\l-\A)^{-1}\right)^ku\right)
\end{split}
\end{equation*}
where we used \eqref{LnTn} and the fact that $\cc_0$ is $\D(\G)$-continuous. Hence
$$\widehat{\cc}\left((\l-\A)^{-1}u\right)=\sum_{k=0}^\infty {\cc}\left((\l-\A)^{-1}\left(\B(\l-\A)^{-1}\right)^ku\right)$$
which proves (see Subsection 3.1) that $\widehat{\cc}=\overline{\cc}.$
\end{proof}
\begin{nb} The above provides an alternative proof of Proposition \ref{prop1.1.mkvo}.
\end{nb}
\begin{nb} The equivalence between the two notions of honesty established here above has some unsuspected consequences. For instance, one notes that Theorems \ref{equivalence} and \ref{equivalencemild} imply that, for a given $u \in \X_+$,
$$\int_0^t \T(s)u\d s \in \D(\overline{\A+\B}) \quad \forall t >0 \Longleftrightarrow \left[\B(\l-\A)^{-1}u\right]^n \to 0 \text{ as } n \to \infty.$$
Notice also that, for any $u \in \X_+$, the mass loss functional $\Delta_u(t)$ defined in Section \ref{sub:instant} is given by $\Delta_u(t)=\lim_{n \to \infty}
\displaystyle \left\|\B\int_0^t\T_n(s) u \d
s\right\|$.
\end{nb}

\end{document}